\documentclass[12pt, reqno]{amsart}
\usepackage{amsmath, amsthm, amscd, amsfonts, amssymb, graphicx, xcolor, mathtools}
\usepackage{subcaption}

\usepackage{tikz}
\usetikzlibrary{patterns, arrows.meta, decorations.markings}

\tikzset{
    blk/.style = {draw, circle, fill=black, text=white},
    wht/.style = {draw, circle, fill=white, text=black},
    mid_auto/.style = {midway, auto}
}

\usepackage{tikz-cd}
\usepackage{bm} %for \bm
\usepackage{comment} %for \begin{comment}
\usepackage{stmaryrd} %for \mapsfrom
\usepackage{extarrows} %for \xlongequal
\usepackage{array} %for notation index
\usepackage{booktabs} %for table

\makeatletter
\def\l@section{\@tocline{1}{0pt}{0pc}{}{}}
\def\l@subsection{\@tocline{2}{0pt}{1.5pc}{}{}}
\makeatother

\usepackage{CJKutf8}

\newtheorem{theorem}{Theorem}[section]
\newtheorem{lemma}[theorem]{Lemma}
\newtheorem{proposition}[theorem]{Proposition}
\newtheorem{corollary}[theorem]{Corollary}
\newtheorem{definition}[theorem]{Definition}
\newtheorem{notation}[theorem]{Notation}
\newtheorem{example}[theorem]{Example}

\theoremstyle{remark}
\newtheorem{remark}[theorem]{Remark}
\numberwithin{equation}{section}

\newcommand{\defsetsmall}[2]{ \{ #1 \ \lvert\ #2 \} }

\newcommand{\NN}{\mathbb{N}}
\newcommand{\ZZ}{\mathbb{Z}}
\newcommand{\Zpos}{\mathbb{Z}_+}

\newcommand{\CP}{\mathbb{CP}}

\newcommand{\Sf}{\mathfrak{S}}
\newcommand{\Nf}{\mathfrak{N}}

\renewcommand{\Mc}{\mathcal{M}}
\newcommand{\Cc}{\mathcal{C}}
\newcommand{\Uc}{\mathcal{U}}

\newcommand{\abs}[1]{ \left| #1 \right| }
\newcommand{\norm}[1]{ \left\| #1 \right\| }
\newcommand{\ol}[1]{ \overline{#1} }
\newcommand{\td}[1]{ \widetilde{#1} }
\newcommand{\ole}{ \overline{e} }
\newcommand{\tde}{ \tilde{e} }

\DeclareMathOperator{\wt}{W}
\newcommand{\bmwt}{ \mathbf{W} }
\newcommand{\bmmu}{ \bm{\mu} }
\newcommand{\bmt}{ \bm{t} }
\DeclareMathOperator{\Lab}{\mathfrak{L}}

\DeclareMathOperator{\Fill}{Fill}
\DeclareMathOperator{\CFF}{CFF}

\DeclareMathOperator{\cyc}{Cyc}
\DeclareMathOperator{\sg}{sg}
\DeclareMathOperator{\fix}{Fix}

\newcommand{\pp}{\Xi}
\newcommand{\ppqu}{\Xi_{QU}}
\newcommand{\ppzl}{\Xi_{0, L}}
\newcommand{\ppl}{\Xi_L}
\newcommand{\ppsi}{\Xi_{\sigma}}

\newcommand{\ppz}{\Xi_0}

\newcommand{\Ptt}{\mathfrak{P}}
\newcommand{\ptt}[1]{\mathfrak{#1}}
\newcommand{\pttp}{\mathfrak{p}}

\newcommand{\one}{\bm{1}}
\newcommand{\ff}{\mathrm{f}}

\DeclareMathOperator{\Aut}{Aut}
\DeclareMathOperator{\id}{id}

\DeclareMathOperator{\Epi}{Epi}
\DeclareMathOperator{\lcm}{lcm}

\DeclareMathOperator{\range}{range}

\makeatletter
\let\qedheremath\displaymath@qed    
\makeatother

\usepackage[bookmarksnumbered, colorlinks, plainpages,
citecolor=teal,
linktoc=all]{hyperref}

\begin{document}
%\centerline{}

\title
[Hurwitz numbers via enumeration of hypermaps]
{Hurwitz numbers of a fixed partition $(m, 1^{n-m})$ \\ via enumeration of unrooted hypermaps}

\author[S. Yi]{Yi Song}

\iffalse
\address{$^{1}$ School of Mathematical Sciences, Soochow University, Suzhou, Jiangsu, People's Republic of China.}
\email{\textcolor[rgb]{0.00,0.00,0.84}{sclu@suda.edu.cn}}

\address{$^{2}$ School of Mathematical Sciences, University of Science and Technology of China, Hefei, Anhui, People's Republic of China.}
\email{\textcolor[rgb]{0.00,0.00,0.84}{sif4delta0@mail.ustc.edu.cn}}
\fi

%\dedicatory{This paper is dedicated to Professor ABCD}

\subjclass[2020]{05C30 05A19} %{Primary 05C30 05C10; Secondary 05A19 57M15.}
%Ref: 05A19 05C10 05C22 05C30 57M15
%arXiv 2501.00808 / 2503.20661 : Primary 32G15 58E11; Secondary 57M15 30F30

\keywords{Hurwitz numbers, map, enumeration, unrooted hypermap, one-face map}%graph theory

\date{%Received: xxxxxx; Revised: yyyyyy; Accepted: zzzzzz.
%\newline \indent $^{*}$ Corresponding author
}

\begin{abstract} 
This manuscript studies a special case of the Hurwitz enumeration problem: 
for branched covers from genus $g$ compact Riemann surface  to the Riemann sphere, with three branch points, and require the branching data at one of the branch points to be of the partition $(m\ 1^{n-m})$, obtain a formula of Hurwitz number. 
The Hurwitz enumeration problem can be transformed into enumeration of a class of unrooted hypermaps. 
We first provide a enumeration formula for rooted hypermaps, thereby obtaining the weighted Hurwitz numbers. 
Next give the quantitative relationship between the enumeration of unrooted hypermaps and that of rooted hypermaps in the general setting. 
Finally, combining these two results, we obtain a formula for the unrooted hypermaps or the Hurwitz numbers in the special setting. 
\end{abstract}

\maketitle

\tableofcontents

%%%%% section %%%%%
\section{Introduction}

Hurwitz number counts the number of branched coverings with a given branching data.
In the theory of dessins d'enfants, there is a one-to-one correspondence between branched coverings and connected bi-colored graphs embedded on surfaces, called \textbf{hypermaps}.
This allows us to study Hurwitz numbers via the enumeration of hypermaps.

%%%%% subsection %%%%%
\subsection{Branched Coverings and Branching Data}

Let $X,Y$ be two compact Riemann surfaces.
A non-constant holomorphic mapping $f : X \to Y$ is called a \textbf{branched covering}.
If near a point $x \in X$, the map $f$ can be expressed as $z \mapsto z^k$, then $f$ is said to have \textbf{multiplicity} $m$ at $x$.
All points in $X$ with multiplicity greater than $1$ are called \textbf{branch points}, and their images in $Y$ are called \textbf{branch values}.

Now we restrict to the case where $Y = \CP^1$ is the Riemann sphere and there are only three branch values. 
We may assume that the three branch values are $0, 1, \infty$ without loss of generality. 
We describe the \textbf{branching data} of a branched covering $f : X \to \CP^1$ via the following definition.

\begin{definition}
    Let $f : S_g \to \CP^1$ be a branched covering from a compact Riemann surface $S_g$ of genus $g$ to the Riemann sphere $\CP^1$, with three branch values $0, 1, \infty$.
    The \textbf{branching data} $(g, n; \pi_1, \pi_2, \pi_3)$ of $f$ is defined as
    \begin{itemize}
        \item the genus $g$ of the surface;
        \item the degree $n = \deg f$ of the map;
        \item the multiset of multiplicities at all points in the preimage $f^{-1}(0)$ of the branch value $0$, which forms a partition $\pi_1$ of $n$; similarly, the multiplicities at the preimages of $1$ and $\infty$ yield the partitions $\pi_2$ and $\pi_3$, respectively.
    \end{itemize}
\end{definition}

\bigskip

The Riemann-Hurwitz formula tells us (where $|\pi|$ denotes the number of parts in the partition)
\begin{equation}
   |\pi_1| + |\pi_2| + |\pi_3| - n = 2 - 2g .
\end{equation}

Conversely, the Hurwitz existence problem \cite{Hur91} asks:
\begin{center}
\textbf{Given a branching data satisfying the Riemann-Hurwitz formula, \\
does it arise as the branching data of some branched covering?}
\end{center}

%\GREEN{should cite some content}

We are actually more interested in the enumerative version of the Hurwitz problem:
\begin{center}
\textbf{Given a branching data, \\ 
how many branched coverings have the same branching data?}
\end{center}

To count the number of branched coverings, it is necessary to define isomorphisms between branching data.
Because for any automorphism $I$ on the Riemann surface $S_g$, the branched coverings $f$ and $f \circ I$ have the same branching data.

\begin{definition}
    Let $f_1, f_2 : S_g \to \CP^1$ be branched coverings with three branch values $0, 1, \infty$. If there exists a biholomorphic map $I : S_g \to S_g$ such that $f_2 \circ I = f_1$, then the branched coverings $f_1$ and $f_2$ are said to be \textbf{isomorphic}.
\end{definition}

\bigskip

The isomorphism in this definition is also called ``strong equivalence''. In the Hurwitz enumeration problem mentioned above, the phrase ``the number of branched coverings'' actually asks for the number of isomorphism classes of branched coverings corresponding to a given branching data.
This number is called the \textbf{Hurwitz number}.

Moreover, \textbf{weighted Hurwitz numbers} are easier to compute.
The definition above already gives the notion of isomorphism of branched coverings, so one can naturally define the automorphism group $\Aut(f)$ of a branched covering $f$.
The weighted Hurwitz number is defined as $\sum_{f} \frac{1}{|\Aut(f)|}$, with weight $1 / {|\Aut(f)|}$ (the reciprocal of the order of the automorphism group), where $f$ runs over all non-isomorphic branched coverings. 

In the next paragraph, the term Hurwitz number mostly refers to the weighted Hurwitz number.

In the Hurwitz enumeration problem, Mednykh \cite{Med84}, Hao Zheng \cite{Zhe06} and others used representation-theoretic methods to obtain complicated and not easily computable formulas for Hurwitz numbers. 
To find concise formulas, most researchers either impose additional conditions on the branching data or only require constraints on the genus $g$, the degree $n$, and the number of preimages $|\pi_i|$, $i=1,2,3$. 
Zograf \cite{Zog15} used generating function methods to compute Hurwitz numbers for arbitrary genus $g$ and degree $n$. Prior to this, the special cases of genus $g = 0$ and $1$ had been obtained by Walsh \cite{Wal75} and Arqu\`es \cite{Arq87}. 
In the case $\pi_3 = (n)$, Harary and Tutte \cite{HarTut64} gave Hurwitz numbers for genus $g=0$, and Goupil and Schaeffer \cite{GpSch98} gave them for arbitrary genus. 
Chapuy et al.\cite{CFF13} provided a combinatorial proof of the Goupil--Schaeffer result. 
Goulden-Jackson \cite{GlJac92} and Poulalhon-Schaeffer \cite{PouSch02} are respectively extensions of these results to the case of more than three branch points. 
In the case $\pi_3 = (m\, 1^{n-m})$, Kochetkov \cite{YYK15} gave Hurwitz numbers for genus $g=0$. 
Lu and Song \cite{LuSong26} gave a combinatorial proof of Kochetkov's result.

\bigskip

In this paper, we study Hurwitz numbers for a given set of branching data where one partition, $\pi_3 = (m\, 1^{n-m})$, has only one part not equal to $1$. 
We first give a formula for the weighted Hurwitz numbers in this case. 
Then, by studying how to convert weighted Hurwitz numbers to unweighted Hurwitz numbers in the general setting, we finally obtain a formula for the Hurwitz numbers in this case.

%%%%% subsection %%%%%
\subsection{Hurwitz Numbers and Hypermap Enumeration}

The branched coverings mentioned above have a more intuitive model. 
Namely, we can represent a branched covering using graph embedded on a surface, called hypermap. 
Such a correspondence is called the theory of ``dessins d'enfants''. 
Moreover, the genus of the surface, the number of edges, the weight distribution of the hypermap — which we call the \textbf{passport} — correspond exactly to the branching data. 
Thus, the number of branched coverings with given branching data i.e. the Hurwitz number, is transformed into the number of hypermaps with given passport.

We first give the precise definitions of hypermaps, hypermap isomorphisms and passports, etc., and then present the one-to-one correspondence between branched coverings and graphs.

\begin{definition}
    An embedding $i : G \to S_g$ of an abstract graph $G = (V, E)$ into a closed oriented surface $S_g$ of genus $g$ is called a \textbf{map}. The embedding satisfies the following conditions:
    \begin{itemize}
        \item Vertices $v \in V$ are mapped to points on the surface;
        \item Edges $e = \{v_1, v_2\} \in E$ are mapped to curves on the surface connecting $i(v_1)$ and $i(v_2)$. These curves are pairwise disjoint except at their endpoints;
        \item Each connected component of $S_g \setminus i(G)$ is homeomorphic to an open disk; these components are called the \textbf{faces} of the map, and they form the set $F$.
    \end{itemize}
    In addition, if the abstract graph is bicolored, i.e., the vertex set $V = U_1 \sqcup U_2$ can be partitioned into black vertices $U_1$ and white vertices $U_2$ such that every edge connects a black vertex to a white vertex, then the bicolored map is called \textbf{hypermap}. 
    
    We also denote the set of faces by $U_3 := F$, so that a hypermap is denoted by $M = (S_g; E, U_k)$.
\end{definition}

\bigskip

Two hypermaps $M_i = (S_g; E_i, U_{i, k}), i=1,2$ are called isomorphic if there exists an orientation-preserving homeomorphism $I : S_g \to S_g$ of the underlying surface, inducing a bijection of graph elements such that $I(E_1) = E_2$ and $I(U_{1, k}) = U_{2, k}$.

For a hypermap $M = (S_g; E, U_k)$, we define weights $\wt : U_1 \sqcup U_2 \sqcup U_3 \to \Zpos$ on black vertices, white vertices and faces, respectively.
The weight at a vertex is the number of incident edges. The weight at a face is defined as half the number of sides of its topological polygon.
If the total number of edges is $n$, we have
\begin{equation*}
    \sum_{u_k \in U_k} \wt(u_k) \equiv n \ ,\ 
    |U_1| + |U_2| + |U_3| - n = 2 - 2g .
\end{equation*}

Recording the weight distributions on black vertices, white vertices and faces of a hypermap as partitions $\pi_k$ of $n$ ($k=1,2,3$), together with the genus $g$ and the number $n$ of edges, we obtain a \textbf{passport} $\pp = (g, n; \pi_1, \pi_2, \pi_3)$.
We denote by $\Mc(\pp)$ the set of all non-isomorphic labelled hypermaps with passport $\pp$.

\begin{proposition}[\cite{LsZak04}, Section 1.5.1] \label{prop:dessin}
    The isomorphism classes of branched covers $f : S_g \to \CP^1$ with branching data $(g, n; \pi_1, \pi_2, \pi_3)$ are in one-to-one correspondence with isomorphism classes $M \in \Mc(g, n; \pi_1, \pi_2, \pi_3)$ of hypermaps with passport $(g, n; \pi_1, \pi_2, \pi_3)$.
    
    Concretely, for each branched cover $f$, take the preimages $f^{-1}(0)$ and $f^{-1}(1)$ as the black and white vertex sets, respectively. 
    And take the preimage $f^{-1}\big([0,1]\big)$ of the closed interval $[0,1]$ as the edges. 
    This yields the corresponding hypermap. 
    Moreover, each face of the hypermap corresponds to a preimage of $\infty$.
    \qed
\end{proposition}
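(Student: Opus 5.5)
The plan is to go through the monodromy (permutation) description of both sides and show that they coincide.

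\emph{Branched covers to permutations.} Fix the base point $1/2\in\CP^1\setminus\{0,1,\infty\}$. Since $f$ has no branch values off $\{0,1,\infty\}$, its restriction to $S_g\setminus f^{-1}\{0,1,\infty\}$ is an unramified covering of degree $n$. Let $\gamma_0,\gamma_1$ be loops at $1/2$ around $0$ and $1$, oriented so that $\gamma_0\gamma_1\gamma_\infty=1$ in $\pi_1(\CP^1\setminus\{0,1,\infty\},1/2)$. Lifting these loops to the fibre $f^{-1}(1/2)$, identified with $\{1,\dots,n\}$, gives permutations $\sigma_0,\sigma_1,\sigma_\infty$ with $\sigma_0\sigma_1\sigma_\infty=1$. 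These permutations generate a transitive group because $S_g$ is connected. The cycle type of $\sigma_j$ is $\pi_j$, since a cycle of $\sigma_j$ corresponds to one preimage of $j$, and the cycle length equals the local multiplicity $k$ of the model $z\mapsto z^k$. Changing the labelling of the fibre conjugates the triple simultaneously. The Riemann existence theorem then says that isomorphism classes of covers with branching data $(g,n;\pi_1,\pi_2,\pi_3)$ correspond bijectively to transitive triples with these cycle types, taken up to simultaneous conjugation. The genus is forced by Riemann--Hurwitz.

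\emph{Hypermaps to permutations.} Let $M=f^{-1}([0,1])$. Its edges are the $n$ lifts of the segment $[0,1]$, one through each point of $f^{-1}(1/2)$, so edges are labelled by $\{1,\dots,n\}$. Every edge joins a point of $f^{-1}(0)$ to a point of $f^{-1}(1)$, so the graph is bicoloured. Each component of $S_g\setminus M$ maps to $\CP^1\setminus[0,1]$, which is a disc punctured at $\infty$. By the local model $z\mapsto z^k$ at a preimage of $\infty$, that component is an open disc containing exactly one preimage of $\infty$. Hence $M$ is a hypermap and $\wt$ agrees with the multiplicities. The $2k$ sides of a face of weight $k$ arise because the boundary of $\CP^1\setminus[0,1]$ traverses $[0,1]$ twice, and a face covering it $k$ times therefore has $2k$ sides. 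In the other direction, a hypermap determines permutations combinatorially. Let $\sigma_0$ rotate edges counterclockwise around black vertices and $\sigma_1$ rotate them around white vertices. Then $\sigma_\infty=(\sigma_0\sigma_1)^{-1}$ describes the faces, with the convention chosen to match the orientation of the loops. Connectivity of the graph gives transitivity, and orientation-preserving homeomorphisms correspond exactly to simultaneous conjugation. Conversely, any transitive triple builds a hypermap: glue one polygon per cycle of $\sigma_\infty$. This surface-from-triple construction is the standard one.

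\emph{Matching the two descriptions.} It remains to check that both constructions produce the same triple from a given $f$. This is a local picture. Near a preimage $x$ of $0$ with multiplicity $k$, the map is $z\mapsto z^k$. The $k$ lifts of $[0,1]$ at $x$ are the $k$ rays $\arg z=2\pi j/k$, and the lift of $\gamma_0$ cycles them in counterclockwise cyclic order. So the monodromy $\sigma_0$ equals the rotation of edges around the black vertex $x$. The same argument applies at white vertices. Since the two bijections agree, the correspondence follows, and an isomorphism $I$ with $f_2\circ I=f_1$ is exactly a hypermap isomorphism.

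The hardest step is the converse direction, where a hypermap must be given the structure of a branched cover, i.e. a complex structure with $f$ holomorphic. I would first build a topological branched cover by mapping each face homeomorphically, via the $2k$-gon model, onto the sphere cut along $[0,1]$, and then pull back the complex structure of $\CP^1$. This makes $f$ holomorphic away from the vertices and the preimages of $\infty$. At those isolated points, removable singularities together with the local charts $z\mapsto z^{1/k}$ extend the structure. Uniqueness up to biholomorphism follows because an orientation-preserving homeomorphism commuting with $f$ is holomorphic off a finite set, hence holomorphic everywhere.
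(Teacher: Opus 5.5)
Your argument is correct and is essentially the standard proof the paper relies on when it cites Lando--Zvonkin without giving its own argument: both covers (via monodromy and the Riemann existence theorem) and hypermaps (via rotation systems) are encoded by transitive permutation triples up to simultaneous conjugation, and a local check at the vertices shows that $f \mapsto f^{-1}([0,1])$ is compatible with these encodings. One slip in your last paragraph: for $k>1$ a face of weight $k$ cannot be mapped homeomorphically onto $\CP^1\setminus[0,1]$ compatibly with its $2k$ sides --- it must be mapped as a $k$-fold branched cover modelled on $z\mapsto z^k$ over $\infty$ (e.g.\ cone the $2k$-gon from its centre and send the $2k$ triangles alternately onto the upper and lower half-planes), though that paragraph is redundant anyway, since the existence direction is already supplied by the Riemann existence theorem you invoked.
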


\bigskip

By the dessins d'enfants theory, Hurwitz numbers count the number of hypermaps $\abs{\Mc(\pp)}$. 

\bigskip

For weighted Hurwitz numbers, we need to define the so-called rooted hypermaps.

\begin{definition}
    A \textbf{rooted hypermap} $(M, e)$ consists of a hypermap $M = (S_g; E, U_k; \Lab_k)$ and a root edge $e \in E$.
    Two rooted hypermaps $(M_i, e_i), i=1,2$ are isomorphic, if an orientation-preserving homeomorphism $I : S_g \to S_g$ gives an isomorphism between $M_1$ and $M_2$, and the root edge is preserved $I(e_1) = e_2$.
    
    The set of all non-isomorphic rooted hypermaps with passport $\pp$ is denoted by $\Mc_R(\pp)$.
    %For fixed genus $g$ and number of edges $n$, we also have the sets $\Mc_R(g, n)$.
\end{definition}

\bigskip

%Analogously, we can define sets of rooted hypermaps $\Mc_R(g, n), \QUc_R(g, m, n), \Uc_{R, W}(g, m, n)$.

%\bigskip

An important property of rooted hypermaps is that their automorphism group is trivial: $\Aut(T, e) \cong \{\id\}$.
Hence, all vertices, edges and faces of a rooted hypermap are distinguishable.

A simple formula in hypermap enumeration gives
\begin{equation} \label{eq:rooted_and_nonrooted}
    \frac1n \abs{\Mc_R(\pp)} 
    = \sum_{M \in \Mc(\pp)} \frac1{\abs{\Aut(M)}} \ . 
\end{equation}
%The following proposition tells us that,
Together with the dessins d'enfants theory in Proposition \ref{prop:dessin}, we obtain directly that the weighted Hurwitz number equals $\frac1n \abs{\Mc_R(\pp)}$.
In other words, the enumeration of rooted hypermaps gives the weighted Hurwitz numbers.

\bigskip
%%%%% subsection %%%%%
\subsection{Labeled Hypermaps and Passports}

For convenience, we assign distinct positive integers as labels to all vertices and faces of an hypermap.  This is \textbf{totally labeled hypermap} $M = (S_g; E, U_k; \Lab_k)$.  
Here the bijective labeling function $\Lab_k : U_k \to [\abs{U_k}]$ maps the vertex or face set $U_k$ to the set of positive integers $[\abs{U_k}] := \{1, \ldots, \abs{U_k}\}$.  
In this case, each weight distribution is represented by a sequence $\bmwt_k = \big( \wt(\Lab_k^{-1}(i)) \big)_{i=1}^{\abs{U_k}}$, recording the weight of the $i$-th vertex or face.

Two totally labeled hypermaps $M_i = (S_g; E, U_{i, k}; \Lab_{i, k}), i=1,2$ are isomorphic if there exists an orientation-preserving homeomorphism $I : S_g \to S_g$ that gives an isomorphism of the unlabeled hypermaps $(S_g; E, U_{i, k})$, and pairs vertices and faces with the same labels $\Lab_{2, k} \circ\ I = \Lab_{1, k}$.

\bigskip

Below we use a broader definition (similar to that in \cite{GorTor1980}) which distinguishes vertices or faces of the same weight only when their labels $\Lab_k(u) \in S_k$ are different; they are not distinguished if the labels are the same.  
Here the index $s \in S_k$ corresponds to vertices of weight $\wt_k(s)$, with total number $\lambda_k(s)$.

\bigskip

\begin{definition}\label{def:passport}
    A \textbf{passport} $\pp = (g, n; \Pi_1, \Pi_2, \Pi_3)$ consists of the genus $g$, the number of edges $n$, and three \textbf{weight distributions} $\Pi_k$ for black vertices, white vertices and faces, respectively.  
    Each weight distribution $\Pi_k = (S_k, \lambda_k, \wt_k)$ consists of a nonempty \textbf{index set} $S_k$, a \textbf{multiplicity function} $\lambda_k : S_k \to \Zpos$, and a \textbf{weight function} $\wt_k : S_k \to \Zpos$;  
    they satisfy the compatibility condition that the sum of weights equals the number of edges and Euler's formula:
    \begin{equation}
        \sum_{s \in S_k} \lambda_k(s) \wt_k(s) = n \ ,\ k = 1,2,3  \quad ,\quad \sum_{k=1}^3 \sum_{s \in S_k} \lambda_k(s) - n = 2 - 2g \ .
    \end{equation}

    The numbers of black vertices, white vertices, faces, and total vertices of the passport are
    \begin{equation}\begin{split}
        u_k(\pp) := \sum_{s \in S_k} \lambda_k(s) \ , \ k=1,2,3 \quad ,\quad 
        v(\pp) &:= u_1(\pp) + u_2(\pp) \ .
    \end{split}\end{equation}
    The Euler's formula can b denoted by $v(\pp) + u_3(\pp) - n = 2 - 2g$.
\end{definition}

\bigskip

\begin{definition} \label{def:labeled_graph}
    A \textbf{labeled hypermap} with passport $\pp$ consists of an hypermap $M = (S_g; E, U_k)$ on a closed oriented surface $S_g$ of genus $g$ and \textbf{labeling functions} $\Lab_k : U_k \to S_k, k=1,2,3$ on vertices and faces, satisfying
    \begin{itemize}
        \item Compatibility between the weight functions  $\wt = \wt_k \circ \Lab_k \ ,\ k=1,2,3$;
        \item The number of vertices with $s$ label is given by the multiplicity function $\abs{\Lab_k^{-1}(s)} = \lambda_k(s), \forall\ s \in S_k \ ,\  k = 1,2,3$.
    \end{itemize}
\end{definition}

\bigskip

Thus the numbers of black vertices, white vertices and faces are exactly the sums of the corresponding multiplicity functions
$$
\abs{U_k} = u_k(\pp) \ ,\ k = 1,2,3 \ .
$$

In this thesis, a labeled hypermap is still denoted $M = (S_g; E, U_k; \Lab_k)$, and all \textbf{maps} mentioned subsequently are labeled hypermaps. 

Isomorphism of labeled hypermaps is defined analogously to isomorphism of totally labeled ones. 
For a map $M$, the set of all isomorphisms from $M$ to itself forms the automorphism group $\Aut(M)$.  
We denote by $\Mc(\pp)$ the set of all non-isomorphic maps with passport $\pp$.

\bigskip

It is necessary to introduce \textbf{notation for passports} to further clarify the meaning of assigning different index labels to vertices of the same weight.

\begin{notation} \label{notat:passport}
    For a weight distribution $\Pi = (S, \lambda, \wt)$, for each admissible weight $w\in \wt(S)$, the indices $\wt^{-1}(w) \subset S$ corresponding to weight $w$ are distinguished by different positive integer subscripts, i.e., $\wt^{-1}(w) = \{ w_1, w_2, \cdots, w_a \}$, where $a=\lvert \wt^{-1}(w) \rvert$.  
    Then we use \textbf{power notation} to represent the entire weight distribution:
    \[ \Pi = \prod_{s\in S} s^{\lambda(s)} \ . \]  
    Finally, a passport is still denoted $\pp = (g, n; \Pi_1, \Pi_2, \Pi_3)$.
\end{notation}

\bigskip

Note that in an unlabeled hypermap, vertices or faces with the same weight are not distinguished; then $\wt_k$ is injective, and $\Pi_k$ can be completely described by a partition $\pi_k = \prod_{s \in S} \wt_k(s)^{\lambda_k(s)}$.

In a totally labeled hypermap, vertices or faces with the same weight are all distinguished because their labels are distinct positive integers.  
In this case $\lambda_k \equiv 1$. Using $S_k = [u_k(\pp)]$, then sequences $\bmwt_k = (\wt_k(i))_{i=1}^{u_k(\pp)}$ completely determine $\Pi_k$.

This shows that our definition of a passport generalizes those of unlabeled and totally labeled hypermaps.

\bigskip

Below we give some examples of labeled hypermaps, their isomorphisms, and their passports.

\begin{example}
Figure \ref{fig:example_map} shows some maps on the sphere.

The upper left map has two black vertices of weight 3, two white vertices of weight 3, and two faces of weight 3, corresponding to the unlabeled passport $(0, 6; 2^3, 2^3, 3^2)$.

The upper right maps have the same vertex weights as the upper left map, but the black vertices are labeled $2_1, 2_2, 2_3$ respectively, and each label corresponds to exactly one vertex.
According to the notation in \ref{notat:passport}, the corresponding passport is $(0, 6; 2_1\ 2_2\ 2_3 \ ,\ 2^3 \ ,\ 3^2)$.

Rotating the upper right map by $2\pi/3$ yields the lower left map, so they are isomorphic.
However, the upper right map requires a horizontal flip to obtain the lower right map; this operation is not orientation-preserving and therefore does not constitute an isomorphism.
In fact, the upper right map and the lower right map are non-isomorphic.
\end{example}

\begin{figure}[ht]
\centering
\begin{tikzpicture}[scale=0.9]

    %%%%% map1 %%%%%
    \begin{scope}
    
    % 绘制一般的黑点白点和连线
    \node[blk] (B1) at (0, 0) {\large $2$};
    \node[wht] (W1) at (2, 0) {\large $2$};
    \node[blk] (B2) at (3, 1.73) {\large $2$};
    \node[wht] (W2) at (2, 3.46) {\large $2$};
    \node[blk] (B3) at (0, 3.46) {\large $2$};
    \node[wht] (W3) at (-1, 1.73) {\large $2$};
    
    \draw (B1) edge[ultra thick] (W1);
    \draw (B1) edge[ultra thick] (W3);
    \draw (B2) edge[ultra thick] (W1);
    \draw (B2) edge[ultra thick] (W2);
    \draw (B3) edge[ultra thick] (W2);
    \draw (B3) edge[ultra thick] (W3);
    
    \end{scope}

    %%%%% map2 %%%%%
    \begin{scope}[shift={(6, 0)}]
    % 绘制一般的黑点白点和连线
    \node[blk] (B1) at (0, 0) {$2_1$};
    \node[wht] (W1) at (2, 0) {\large $2$};
    \node[blk] (B2) at (3, 1.73) {$2_2$};
    \node[wht] (W2) at (2, 3.46) {\large $2$};
    \node[blk] (B3) at (0, 3.46) {$2_3$};
    \node[wht] (W3) at (-1, 1.73) {\large $2$};
    
    \draw (B1) edge[ultra thick] (W1);
    \draw (B1) edge[ultra thick] (W3);
    \draw (B2) edge[ultra thick] (W1);
    \draw (B2) edge[ultra thick] (W2);
    \draw (B3) edge[ultra thick] (W2);
    \draw (B3) edge[ultra thick] (W3);
    \end{scope}

    %%%%% map3 %%%%%
    \begin{scope}[shift={(0, -5)}]
    % 绘制一般的黑点白点和连线
    \node[blk] (B1) at (0, 0) {$2_3$};
    \node[wht] (W1) at (2, 0) {\large $2$};
    \node[blk] (B2) at (3, 1.73) {$2_1$};
    \node[wht] (W2) at (2, 3.46) {\large $2$};
    \node[blk] (B3) at (0, 3.46) {$2_2$};
    \node[wht] (W3) at (-1, 1.73) {\large $2$};
    
    \draw (B1) edge[ultra thick] (W1);
    \draw (B1) edge[ultra thick] (W3);
    \draw (B2) edge[ultra thick] (W1);
    \draw (B2) edge[ultra thick] (W2);
    \draw (B3) edge[ultra thick] (W2);
    \draw (B3) edge[ultra thick] (W3);
    \end{scope}

    %%%%% map4 %%%%%
    \begin{scope}[shift={(6, -5)}]
    % 绘制一般的黑点白点和连线
    \node[blk] (B1) at (0, 0) {$2_3$};
    \node[wht] (W1) at (2, 0) {\large $2$};
    \node[blk] (B2) at (3, 1.73) {$2_2$};
    \node[wht] (W2) at (2, 3.46) {\large $2$};
    \node[blk] (B3) at (0, 3.46) {$2_1$};
    \node[wht] (W3) at (-1, 1.73) {\large $2$};
    
    \draw (B1) edge[ultra thick] (W1);
    \draw (B1) edge[ultra thick] (W3);
    \draw (B2) edge[ultra thick] (W1);
    \draw (B2) edge[ultra thick] (W2);
    \draw (B3) edge[ultra thick] (W2);
    \draw (B3) edge[ultra thick] (W3);
    \end{scope}
\end{tikzpicture}

\caption{example} \label{fig:example_map}

\end{figure}

\begin{remark}
    Passports with identical notation are called \textbf{equivalent}.
    Equivalent passports $\pp_1, \pp_2$ correspond to maps with a canonical bijection between them. Thus we do not distinguish equivalent passports.
    Passport equivalence is equivalent to replacing the index set $S$ by a bijection, so later we may assume that the index set has a given form.
\end{remark}

\bigskip
%%%%% subsection %%%%%
\subsection{Main Results}

We first recall our problem. 
We wish to give the Hurwitz number when one of the three partitions is $\pi_3 =(m\ 1^{n-m})$.
Recall that Hurwitz number is exactly the number of maps $\abs{\Mc(\pp)}$ with passport $\pp = (g, n; \pi_1, \pi_2, \pi_3 = (m\ 1^{n-m}))$. 
The weighted Hurwitz number is $\frac1n \abs{\Mc_R(\pp)}$, which is related to the number of rooted maps.

For a not necessarily unlabeled passport, we still require that one weight distribution has the form $\pi_3 = (m\ 1^{n-m})$. Such a passport is denoted by
$$
\ppqu = (g, m, n; \Pi_1, \Pi_2) := (g, n; \Pi_1, \Pi_2, \pi_3 = (m\ 1^{n-m})) \ .
$$
We call a passport of the above form a \textbf{quasi-one-face passport}, and the corresponding maps are called \textbf{quasi-one-face maps}.
In particular, if $m = n$ i.e. $\pi_3 = (n)$, the corresponding passport and maps are called \textbf{one-face passport} and \textbf{one-face maps}(also called one-face maps).
In this case, a totally labeled quasi-one-face passport only requires that $\Pi_1, \Pi_2$ are totally labeled.

\bigskip

Now we proceed to give the Hurwitz number for $\pi_3 = (m\ 1^{n-m})$, i.e., the number of quasi-one-face maps.

First, we compute the weighted Hurwitz number, or the number of rooted quasi-one-face maps for a given quasi-one-face passport $\ppqu$.

Chapuy et al.\cite{CFF13} pointed out that one-face maps of positive genus can be reduced to genus zero ones by vertex splitting. 
Inspired by this, we reduce the enumeration of rooted quasi-one-face maps of positive genus to the enumeration of those of genus zero in the following theorem. 
Here vertex splitting is reflected in the passport $\ppqu$ decomposed into $\ppzl$ (via the condition $\ppzl \xrightarrow{\Lambda} \Fill(\ppqu)$ described below).
Kochetkov \cite{YYK15} gave an enumeration formula for the genus zero case, so the theorem is ultimately computable. 
We quote that formula in Theorem \ref{thm:YYK_formula}.

\begin{theorem} \label{thm:CFF_general_passport_formula}
    Given a quasi-one-face passport $\ppqu = (g, m, n; \Pi_1, \Pi_2)$, the number of rooted quasi-one-face maps is
    \begin{equation} \label{eq:QUR_ppqu_enum}
        \abs{\Mc_R(\ppqu)} = \frac{n}{\ 2^{2g}\ (\ppqu)!}  \sum_{\Lambda \vdash g} R(\Lambda)^{-1} \sum_{\ppzl \xrightarrow{\Lambda} \Fill(\ppqu)} \abs{\Mc(\ppzl)} \ .
    \end{equation}
\end{theorem}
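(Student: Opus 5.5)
We follow the bijective strategy of Chapuy--F\'eray--Fusy \cite{CFF13} and adapt it from one-face maps to quasi-one-face maps by first ``filling'' the $n-m$ faces of weight $1$. A face of weight $1$ is a digon bounded by two parallel edges between the same black and white vertices. Collapsing each such digon (identifying its two edges) gives a map with fewer edges and a single face of weight $m$. Conversely, a one-face map is turned back into a quasi-one-face map by doubling $n-m$ chosen edges. We expect $\Fill(\ppqu)$ to be exactly the one-face passport obtained this way, possibly with extra labels recording how many digons sit on each edge or corner. The first step is to make this correspondence precise at the level of rooted maps. The rooting has to be transported correctly, which is the origin of the factor $n$ in \eqref{eq:QUR_ppqu_enum}. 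The factor $(\ppqu)!$, presumably a product of factorials of multiplicities $\lambda_k(s)$, should come from passing between totally labeled maps and the labeled maps of Definition~\ref{def:labeled_graph}. In the rooted setting all vertices are distinguishable, so each labeled rooted map has exactly $\prod \lambda_k(s)!$ totally labeled refinements.

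The second step is to apply the CFF bijection to the totally labeled one-face maps. Their theorem says that $2^{2g}$ times the set of rooted one-face maps of genus $g$ is in bijection with rooted genus-zero one-face maps (plane trees) carrying a C-decoration. In a C-decoration, the vertices are partitioned into cycles of odd length, and each cycle is glued into one vertex, with the genus recovered from the cycle lengths. We index the possible cycle-length data by $\Lambda \vdash g$, where a cycle of length $2i+1$ lowers the vertex count by $2i$ and contributes $i$ to $g$. Within each colour class, the cycles are grouped into vertices of the target passport. The condition $\ppzl \xrightarrow{\Lambda} \Fill(\ppqu)$ encodes exactly this: the genus-zero passport $\ppzl$ has $\Lambda$-shaped groupings whose weights sum to the weights of $\Fill(\ppqu)$. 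The factor $R(\Lambda)^{-1}$ should then compensate the overcounting from the cyclic orderings inside each C-cycle, since each odd cycle of length $\ell$ is counted $\ell$ times. It also has to compensate the symmetry among equal parts of $\Lambda$. We would identify $R(\Lambda)$ by computing this stabilizer directly.

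The third step is to remove the rooting on the genus-zero side. Since $\ppzl$ is labeled with a marked structure that kills automorphisms (the grouped vertices carry distinct labels), the count of rooted genus-zero maps should equal $n\abs{\Mc(\ppzl)}$ divided by the appropriate label symmetries, or $\abs{\Mc(\ppzl)}$ times the number of root choices. This is where we would use \eqref{eq:rooted_and_nonrooted} together with the triviality of automorphisms. Finally, we would sum over $\Lambda$ and over compatible $\ppzl$ and divide by $2^{2g}$ and $(\ppqu)!$.

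We expect the main obstacle to be the bookkeeping in the first and second steps. The filling operation must commute with the CFF bijection, so the digon data must be attached to edges or corners in a way that the CFF gluing of vertices does not disturb. The normalisations $R(\Lambda)$ and $(\ppqu)!$ must also come out exactly. Our first attempt would be to encode each digon as a marked corner of weight-one leaves on the white side, so that it becomes part of the vertex weights that CFF already handles. We would then check the formula on small cases, such as $g=1$ with $n=3$, against a direct count.
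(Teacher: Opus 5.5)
The skeleton of your plan matches the paper's proof: collapse the digons, pay a labeling factor, apply CFF, convert cycle labels to vertex labels at cost $R(\Lambda)$, and use that totally labeled trees are rigid. But the step you yourself single out as the main obstacle is left open, and the fix you propose would not work.

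\textbf{The weight-transport step.} Encoding digons as marked corners or auxiliary weight-one leaves changes the vertex and edge sets on which CFF acts. What comes out on the genus-zero side would then be plane trees with extra leaves, not the $m$-edge weighted trees counted by $\abs{\Mc(\ppzl)}$ in the statement. Reading edge multiplicities back off leaf positions would also impose constraints that no degree passport sees. The missing idea is that nothing needs to be encoded in degrees.
\begin{itemize}
\item Collapse each class of $w$ parallel edges to a single edge of weight $w$. This gives a weighted one-face map with $m$ edges and total weight $n$. It is not ``doubling $n-m$ chosen edges'', since a class may contain more than two edges.
\item The CFF bijection is a bijection on edge sets that preserves incidence: the edges at $v_M=\CFF(\alpha)$ are exactly the disjoint union of the edges at the tree vertices in $V(\alpha)$.
\item Hence an edge-weight function can be carried verbatim along $\CFF$. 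The weight of each map vertex is then automatically the sum of the weights of the tree vertices in its cycle.
\item That is precisely the row-sum condition $\ppzl \xrightarrow{\Lambda} \Fill(\ppqu)$.
\end{itemize}

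\textbf{The meaning of $\Fill$ and of $R(\Lambda)$.} $\Fill$ is not the digon collapse. It is the passage to the totally labeled passport, in which each of the $\lambda_k(s)$ vertices of type $s$ gets its own label; that is where $(\ppqu)!$ comes from. Consequently $\Lambda=(\bmmu_1,\bmmu_2)$ is a pair of sequences indexed by the labeled vertices of $\Fill(\ppqu)$, not a partition. There is no ``symmetry among equal parts'' to compensate. $R(\Lambda)$ is exactly the number of ways to choose a starting vertex in each cycle, which turns a cycle-labeled C-decorated tree into a tree with vertex labels $(i,j)$. Dividing by any further stabilizer would give a wrong formula.

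\textbf{The rooting.} The trees produced by CFF are rooted at one of the $m$ weighted edges, so $\abs{\Uc_{R,W}(\ppzl)} = m\abs{\Mc(\ppzl)}$, not $n\abs{\Mc(\ppzl)}$. The factor $n$ enters only through $\frac1n\abs{\Mc_R(\ppqu)}=\sum_M \abs{\Aut(M)}^{-1}=\frac1m\abs{\Uc_{R,W}(\ppqu)}$, after which the $m$'s cancel. Combining this with the factor $2^{m+1}$ from CFF and the $2^{m+1-2g}$ sign choices for the cycles (net $2^{-2g}$) gives the stated formula.
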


This involves the following notions:

\textbf{1.} The \textbf{factorial} of a quasi-one-face passport is
\begin{equation}
    (\ppqu)! := \prod_{s \in S_1} \lambda_1(s) \cdot \prod_{s \in S_2} \lambda_2(s) \ .
\end{equation}

\textbf{2.} A \textbf{cycle datum} $\Lambda = (\bmmu_1, \bmmu_2), \bmmu_k = (\mu_{k, j})_{j = 1}^{u_k(\pp)}, k=1,2$ consists of two sequences of nonegetive integers.
The condition $\Lambda \vdash g$ requires $\norm{\bmmu_1} + \norm{\bmmu_2} = g$, where the norm of a sequence is defined as $\norm{\bmmu_k} := \sum_{j=1}^{u_k(\pp)} \mu_{k, j}$.
Its weight $R(\Lambda)$ is defined as
\begin{equation} \label{eq:def_R_Lambda}
    R(\Lambda) := \prod_{j=1}^{u_1(\pp)} (2 \mu_{1, j}+1) \prod_{j=1}^{u_2(\pp)} (2 \mu_{2, j}+1) \ ,
\end{equation}

\textbf{3.} The \textbf{filling} $\Fill(\ppqu) = (g, m, n; \bmwt_1, \bmwt_2)$ of a quasi-one-face passport has the same genus $g$, special face edge count $m$, and total edge count $n$ as the original passport, but replaces each weight distribution $\Pi_k$ with the totally labeled weight distribution represented by the sequence $\bmwt_k$ consisting of $\lambda_k(s)$ copies of the positive integer $\wt_k(s)$.

\textbf{4.} In the condition $\ppzl \xrightarrow{\Lambda} \Fill(\ppqu)$, write
$\Fill(\ppqu) = (g, m, n; \bmwt_1, \bmwt_2)$, and regard each $\bmwt_k$ as a column vector.
A genus zero totally labeled quasi-one-face passport $\ppzl = (0, m, n; \Pi_1 = (S_{T, 1}, \one, \wt_{T, 1}), \Pi_2 = (S_{T, 2}, \one, \wt_{T, 2}))$ has index sets determined by $\Lambda$ of the form
\begin{equation} \label{eq:index_set_T}
    S_{T, k} = \defsetsmall{(i,j)}{1 \leq i \leq u_k(\ppqu), 1 \leq j \leq 2\mu_{k, i}+1} \ ,\ k=1,2 \ .
\end{equation}
Thus the weight distribution $\Pi_k$ can be written as a two-dimensional array $\bmwt_{T, k} = (\wt_{T, k}(i,j))_{i,j}$, where the length of the $i$-th row is $(2\mu_{k, i} + 1)$ (which may vary with $i$).
The condition $\ppzl \xrightarrow{\Lambda} \Fill(\ppqu)$ requires that the sum of each row of the array $\bmwt_{T, k}$ equals the corresponding entry of the column vector $\bmwt_k$, i.e.,
\begin{equation} \label{eq:cond_passport_row_sum}
    \wt_k(i) := \sum_{j=1}^{2\mu_{k,i}+1} \wt_{T, k}(i, j) \ .
\end{equation}

The sum is taken over all \textbf{distinct} passports $\ppzl$ satisfying $\ppzl \xrightarrow{\Lambda} \Fill(\ppqu)$.
If the components $\wt_{T, k}(i,j)$ differ, the corresponding passports are considered different. 

For example, when $\Fill(\ppqu) = (1, 7, 8; (4, 4), (2, 2, 2, 2)), \Lambda = ((1, 0), (0, 0, 0, 0))$, there are three passports $\ppzl$ satisfying the condition
\begin{equation*}
    \pp_1 = (0, 7, 8; 
    \begin{bmatrix}
        2 & 1 & 1 \\
        4 &  & 
    \end{bmatrix}
    ,
    \begin{bmatrix}
        2 \\
        2 \\
        2 \\
        2 \\
    \end{bmatrix}
    ) \ ,\ 
    \pp_2 = (\sim; 
    \begin{bmatrix}
        1 & 2 & 1 \\
        4 &  & 
    \end{bmatrix}
    ,
    \begin{bmatrix}
        2 \\
        2 \\
        2 \\
        2 \\
    \end{bmatrix}
    ) \ ,\ 
    \pp_3 = (\sim; 
    \begin{bmatrix}
        1 & 1 & 2 \\
        4 &  & 
    \end{bmatrix}
    ,
    \begin{bmatrix}
        2 \\
        2 \\
        2 \\
        2 \\
    \end{bmatrix}
    ) \ .
\end{equation*}

\textbf{5.} The total number $\abs{\Mc(\ppzl)}$ of maps corresponding to a genus zero totally labeled quasi-one-face passport $\ppzl$ can be obtained by Kochetkov's formula \ref{thm:YYK_formula}.

\bigskip

As an application of this enumeration theorem, we can also give a simple and direct new proof of the existence problem for Hurwitz numbers with positive genus $g > 0$ and $\pi_3 = (m\ 1^{n-m})$.
Boccara \cite{Boc82} first gave a proof of this existence theorem.
Song et al. \cite{SXY22axv} gave another more constructive proof.

\begin{corollary} \label{cor:exist_quasi-unicellular_map}
    Given a quasi-one-face passport $\ppqu = (g, m, n; \Pi_1, \Pi_2)$.
    If the genus is positive $g > 0$,
    then $\Mc(\ppqu) \neq \varnothing$.
\end{corollary}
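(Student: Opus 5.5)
The plan is to read off nonemptiness from the enumeration formula of Theorem \ref{thm:CFF_general_passport_formula}. Every term on the right-hand side of \eqref{eq:QUR_ppqu_enum} is nonnegative, and the prefactor $n/(2^{2g}(\ppqu)!)$ and the weights $R(\Lambda)^{-1}$ are strictly positive. Hence $\abs{\Mc_R(\ppqu)}>0$, and therefore $\Mc(\ppqu)\neq\varnothing$, as soon as we exhibit \emph{one} cycle datum $\Lambda\vdash g$ and \emph{one} genus zero totally labeled passport $\ppzl\xrightarrow{\Lambda}\Fill(\ppqu)$ with $\Mc(\ppzl)\neq\varnothing$. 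I would not evaluate Kochetkov's formula; instead I would construct a genus zero map directly.

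\textbf{Step 1: choosing $\Lambda$.} Write $\Fill(\ppqu)=(g,m,n;\bmwt_1,\bmwt_2)$. The datum $\Lambda$ must satisfy $2\mu_{k,i}+1\le \wt_k(i)$, so that row $i$ can be split into $2\mu_{k,i}+1$ positive parts, and $\norm{\bmmu_1}+\norm{\bmmu_2}=g$. Euler's formula with $u_3=n-m+1$ gives $v(\ppqu)=m+1-2g$, and hence
\[
\sum_{k,i}(\wt_k(i)-1)=2n-m-1+2g\ \ge\ 2g .
\]
I would try to upgrade this to $\sum_{k,i}\lfloor(\wt_k(i)-1)/2\rfloor\ge g$, and then choose the $\mu_{k,i}$ greedily. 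This parity issue is one place I expect friction. The loss is at most one half per even-weight vertex, so one must check that the slack $2n-m-1\ge n-1$ absorbs it. If that check fails in some degenerate case (many weight-$2$ vertices), I would treat that case separately.

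\textbf{Step 2: the genus zero map.} After splitting, $\ppzl$ has $m+1$ vertices, $n$ edges and faces $(m\,1^{n-m})$. A genus zero map of this type is exactly a plane bicolored tree with $m$ edges in which each tree edge $e$ carries a multiplicity $c_e\ge1$, with $\sum_e c_e=n$. The parallel copies bound the weight-$1$ faces (digons), and the remaining face has $2m$ sides, so it has weight $m$. Any plane embedding of such a tree works.

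So it suffices to find a spanning tree of the complete bipartite graph on the split black and white vertices, with positive integer edge values whose sums at each vertex are the prescribed split weights $\wt_{T,k}(i,j)$. This is a transportation problem with equal totals $n$. The northwest-corner rule produces a tree solution, and that solution has exactly $u_1'+u_2'-1=m$ positive entries precisely when no proper partial sum of the black weights equals a proper partial sum of the white weights (the nondegenerate case).

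\textbf{Main obstacle.} The hard step is arranging this nondegeneracy. I would exploit the freedom in Step 1 (how each row is split and in which order the parts are listed) to break coincidences of partial sums, for instance by peeling off $1$'s in the split parts. If this fails, I would fall back on the general fact that a degenerate basic solution can be perturbed to a spanning tree after re-splitting, or argue by induction on $m$ by removing a leaf.
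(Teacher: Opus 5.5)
Your reduction to positivity of the formula in Theorem \ref{thm:CFF_general_passport_formula} matches the paper. So does your Step 1, since the paper also takes $0\le\mu_{k,i}\le\lfloor(\wt_k(i)-1)/2\rfloor$. The parity worry you raise has no exceptional cases. Since $\lfloor (w-1)/2\rfloor\ge w/2-1$,
\[
\sum_{k,i}\Big\lfloor \frac{\wt_k(i)-1}{2}\Big\rfloor \ \ge\ n-v(\ppqu)\ =\ n-m-1+2g\ \ge\ 2g-1\ \ge\ g ,
\]
using only $m\le n$ and $g\ge 1$.

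The genuine gap is Step 2. You correctly flag it as the main obstacle, but you never show that some admissible split $\ppzl$ is realized by a weighted plane tree. Reordering vertices for the northwest-corner rule cannot settle this, because $v(\ppzl)-1=m\le n$ is not sufficient: there is an arithmetic obstruction that depends on the split.

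For example, take $\ppqu=(1,5,6;\,(6),\,(2,2,2))$ with $\Lambda=((1),(0,0,0))$. Splitting $6$ as $(2,2,2)$ gives black weights $(2,2,2)$ and white weights $(2,2,2)$, and no tree exists. A leaf edge has even weight, and removing it keeps all vertex weights even, so all $5$ edge weights are even and $n\ge 10>6$. Correspondingly, every ordering has coinciding partial sums.

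The missing ingredient is the Boccara--Zannier existence criterion, Theorem \ref{thm:exist_tree}: a weighted bicolored plane tree with passport $\ppzl$ exists iff $(v(\ppzl)-1)\gcd(\ppzl)\le n$. The paper uses the freedom in the split precisely to kill the gcd. Since $g>0$, some $\mu_{k,i}\ge 1$, so that row has at least three parts and one of them can be taken equal to $1$. Then $\gcd(\ppzl)=1$, and the criterion reduces to $m\le n$.

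Your idea of ``peeling off $1$'s'' points in this direction. But a nonzero term in the sum is not established unless you cite this theorem or prove its $\gcd=1$ case yourself, for instance by a leaf-removal induction that keeps a weight-$1$ vertex and preserves $n\ge v-1$.
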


\bigskip

Finally, we obtain the unweighted Hurwitz number for $\pi_3 = (m\ 1^{n-m})$.
That is, the number of corresponding unrooted quasi-one-face maps.
The following theorem starts from the enumeration of rooted quasi-one-face maps, and finally obtains the enumeration of unrooted ones.
Note that the enumeration of rooted quasi-one-face maps is already given by Theorem \ref{thm:CFF_general_passport_formula}, so this theorem is also computable.

Formula \eqref{eq:rooted_and_nonrooted} indicates that the number of rooted maps is related not only to the total count of unrooted maps but also to the number of unrooted maps with restricted symmetries.
Conversely, the number of unrooted maps will also be related to the number of rooted maps with symmetry restrictions.
Our method for restricting symmetries is to consider ``quotients'' of maps: quotient the surface $S_g$ by automorphism group of map, obtaining a quotient map on the quotient surface $O(\sigma)$.
This yields the quotient of the passport $\ppqu$, obtaining a passport $\ppsi$ satisfying the condition $\ppqu = \sigma \times_l \ppsi$ described later.

\begin{theorem} \label{thm:unrooted_QU} 
    Given a quasi-one-face passport $\ppqu = (g, m, n; \Pi_1, \Pi_2)$, the number of unrooted quasi-one-face maps is
    \begin{equation}
    \abs{\Mc(\ppqu)} = \frac1n \sum_{l \, |\, n} \sum_{g = \sigma \times l} \sum_{\ppqu = \sigma \times_l \ppsi} 
    \abs{\Mc_R(\ppsi)} \cdot \abs{\Epi_0 (\Gamma(\sigma), \ZZ_l)} \ .
\end{equation}
\end{theorem}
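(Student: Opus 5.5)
The plan is the standard Burnside / Liskovets-type route from rooted to unrooted counts, specialised to quasi-one-face maps.

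\textbf{Burnside step.} Encode a labeled hypermap with $n$ edges as a triple of permutations on the edge set $[n]$ (the black rotation, the white rotation and the face permutation) together with the labelings. Unrooted maps $\Mc(\ppqu)$ are then orbits of the group $\Sf_n$ acting by conjugation on the set $X$ of such labeled triples, and rooted maps correspond to orbits of the stabiliser of a point. Burnside's lemma gives
\begin{equation*}
\abs{\Mc(\ppqu)}=\frac1{n!}\sum_{\tau\in\Sf_n}\abs{\fix(\tau)} .
\end{equation*}
A permutation $\tau$ fixing a connected triple commutes with the monodromy group. Since that group is transitive, $\tau$ acts semiregularly, so $\tau$ has cycle type $l^{n/l}$ for some $l\mid n$ and generates a cyclic group $\ZZ_l$ of map automorphisms. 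Grouping by the cyclic subgroup $\langle\tau\rangle$ instead of by $\tau$, one expects the equivalent form
\begin{equation*}
\abs{\Mc(\ppqu)}=\frac1n\sum_{l\mid n}\ \sum_{(M,\,e,\,\langle\tau\rangle)}\phi\text{-type weights},
\end{equation*}
that is, the count of rooted maps carrying a marked cyclic automorphism subgroup of order $l$. Here the $\frac1n$ comes from the ratio $\frac{(n-1)!}{n!}$, which converts an orbit count into a rooted count as in \eqref{eq:rooted_and_nonrooted}.

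\textbf{Quotient step.} For a map $M$ with an automorphism $\tau$ of order $l$, pass to the quotient $S_g/\langle\tau\rangle$. This is an orbifold $O(\sigma)$ whose signature $\sigma$ records the quotient genus $\gamma$ and the cone orders. The cone points must lie at vertices or face centres, because $\tau$ acts freely on edges. The quotient hypermap has passport $\ppsi$ with $n/l$ edges. The condition $g=\sigma\times l$ is the Riemann--Hurwitz relation, and $\ppqu=\sigma\times_l\ppsi$ records how the weights lift: a vertex of weight $w$ sitting on a cone point of order $d$ lifts to $l/d$ vertices of weight $wd$.

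For quasi-one-face passports the face data is very constrained. The single big face of weight $m$ must be fixed by $\langle\tau\rangle$, so it sits on a cone point of order $l$ (or $l=1$). The faces of weight $1$ lift freely. Hence $\ppsi$ is again quasi-one-face, so $\Mc_R(\ppsi)$ is computable by Theorem \ref{thm:CFF_general_passport_formula}.

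\textbf{Reconstruction step.} Conversely, given a rooted quotient map with passport $\ppsi$ on $O(\sigma)$, the $l$-fold cyclic branched covers compatible with the cone data are classified by surjections $\Gamma(\sigma)\to\ZZ_l$ from the orbifold fundamental group. Each such surjection must send every elliptic generator of order $d$ to an element of order exactly $d$; I expect this is what the subscript in $\Epi_0$ encodes. Rooting the quotient map at an edge lets us lift the root uniquely up to the deck action, which gives the correspondence
\begin{equation*}
\{\text{rooted }M\text{ with marked }\ZZ_l\}\;\longleftrightarrow\;\Mc_R(\ppsi)\times\Epi_0(\Gamma(\sigma),\ZZ_l).
\end{equation*}
Summing over $l$, over $\sigma$ and over $\ppsi$ then yields the stated formula.

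\textbf{Main obstacle.} The hardest part is getting the normalisations in this bijection exactly right. Three things must be checked:
\begin{itemize}
\item Counting epimorphisms rather than homomorphisms, combined with the factor $\frac1n$, must reproduce the Burnside weight $\phi(l)$ per subgroup correctly.
\item The labels in $\Pi_k$ (the multiplicities $\lambda_k$) must be compatible with the quotient. Each lifted orbit of $l/d$ vertices has to receive a common label, which is what $\sigma\times_l$ must encode.
\item Automorphisms of the orbifold data must not cause overcounting.
\end{itemize}
I would first verify the correspondence on the totally labeled case and on small examples, such as genus $0$ with $m=n$ where Theorem \ref{thm:YYK_formula} applies, before handling general labels.
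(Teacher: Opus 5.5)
Your outline is correct and is essentially the paper's argument. In the paper, Theorem \ref{thm:unrooted_QU} is Theorem \ref{thm:rooted_to_unrooted} specialised, with Lemma \ref{lem:QU_divided_also_QU} showing $\ppsi$ is again quasi-one-face, and the general theorem is proved exactly by your steps: Burnside over conjugation, semiregularity forcing cycle type $(l^{n/l})$, passage to the quotient orbifold map, and lifting via $\Epi_0(\Gamma(\sigma),\ZZ_l)$. The only difference is bookkeeping. The paper parametrises $\fix(\pp;\phi_{l,n/l})$ by $\Mc_N(\ppsi)\times\Epi_0(\Gamma(\sigma),\ZZ_l)\times\ZZ_l^{n/l-1}$ and then uses $\abs{\Mc_N(\ppsi)}=(n/l-1)!\,\abs{\Mc_R(\ppsi)}$. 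You instead count pairs (rooted map, automorphism of order $l$) directly, and the root absorbs the $\ZZ_l^{n/l-1}$ choices of lifts.

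Your $\varphi(l)$ worry disappears if you mark the generator $\tau$ rather than the subgroup $\langle\tau\rangle$, identifying $\langle\tau\rangle\cong\ZZ_l$ by $\tau\mapsto 1$, because $\Epi_0$ counts epimorphisms onto $\ZZ_l$ itself. Reading $g=\sigma\times l$ as just the Riemann--Hurwitz relation is also harmless, since $\Epi_0(\Gamma(\sigma),\ZZ_l)=\varnothing$ whenever Harvey's conditions fail.
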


The following content is involved:

\textbf{1.} An \textbf{orbifold symbol} $\sigma = (g_2; \{t_1, \ldots, t_r\})$ consists of the genus $g$ and an multiset of positive integers $\bm{t} = \{t_i\}_{i=1}^r$, where each $t_i$ is called a \textbf{cone point order}.
When $t_i = 1$, the cone point order is degenerate.
Thus removing or adding any number of $1$'s to the orbifold symbol does not change it.
The unique \textbf{orbifold} corresponding to the orbifold symbol $\sigma$ is denoted by $O(\sigma)$.

The condition $g = \sigma \times l$ requires that the orbifold $O(\sigma) = S_g / \ZZ_l$ is the quotient space of a genus $g$ surface by a cyclic group. 
A description of this condition can be found in Theorem \ref{thm:g_l_sigma_admissible}.

\textbf{2.} A \textbf{passport labelled by the orbifold symbol} $\sigma = (g_2; \bm{t})$ is denoted by $\ppsi = (g_2, n; \Pi_1, \Pi_2, \Pi_3)$.
Here the genus $g_2$ of the orbifold symbol is required to be the same as the genus $g_2$ of the passport.
Moreover, the weight distributions $\Pi_i = (S_i, \lambda_i, \wt_i), i=1,2,3$ satisfy the following conditions:
\begin{itemize}
    \item The unordered sequence of cone point orders $\bmt$ should be partitioned into the following form, where $\{t_{ij}^{\lambda_{ij}}\}$ denotes $\lambda_{ij}$ copies of $t_{ij}$
    \begin{equation} \label{eq:sigma_split}
        \bmt = \bigsqcup_{i=1}^3 \bigsqcup_{j=1}^{N_i} \left\{ \ t_{ij}^{\lambda_{ij}} \ \right\} \ ,
    \end{equation}
   This partition can be achieved by adjusting the number of $1$'s.

    \item The weight distribution $\Pi_i$ can be written in the following form
    \begin{equation} \label{eq:sigma_labeled_passport}
        S_i = \defsetsmall{(w_{ij}, t_{ij})}{i=1,2,3\ , \ 1 \leq j \leq T_i} \ ,\ 
        \lambda_i(w_{ij}, t_{ij}) = \lambda_{ij} \ ,\ 
        \wt_i(w_{ij}, t_{ij}) = w_{ij} \ .
    \end{equation}
    That is, each index element can be written as $(w_{ij}, t_{ij})$ and they are pairwise distinct, where the first component $w_{ij}$ is the weight corresponding to the element, and the second component $t_{ij}$ takes each cone point order of the orbifold symbol exactly once.
\end{itemize}

The passport $\ppsi$ will correspond to maps on orbifolds, with $\lambda_{ij}$ vertices or faces of weight $w_{ij}$ and cone point order $t_{ij}$.
This correspondence can be seen in Definition \ref{def:orbifold_map_passport}.

\textbf{3.} \textbf{Multiplication of an orbifold symbol and a passport} yields an unlabelled passport $\pp = \sigma \times_l \ppsi := (g, n; \pi_1, \pi_2, \pi_3)$, where the genus $g = \sigma \times l$ is the genus of the original surface, the number of edges $n := l \cdot m$, and the weight distributions are defined as follows
\begin{equation} \label{eq:sigma_pp_multi}
    \pi_i := \bigsqcup_{j=1}^{N_i} \left\{ (w_{ij} \cdot t_{ij})^{\lambda_{ij} \cdot l\, /\, t_{ij}} \right\} \quad,\quad i=1,2,3 \ .
\end{equation}
That is, after taking the quotient by the cyclic group $\ZZ_l$, vertices with quantities $\lambda_{ij}$,  weights $w_{ij}$ and cone point orders $t_{ij}$ in the quotient map correspond to vertices with quantitative $(\lambda_{ij} \cdot l\  /\  t_{ij})$ and weight $(w_{ij} \cdot t_{ij})$ in the original map.

\textbf{4.} By Lemma \ref{lem:QU_divided_also_QU}, since $\ppqu$ is a quasi-one-face passport, $\ppsi$ is also a quasi-one-face passport, hence $\abs{\Mc_R(\ppsi)}$ can be obtained directly from Theorem \ref{thm:CFF_general_passport_formula}. 
The number $\abs{\Epi_0(\Gamma(\sigma), \ZZ_l)}$ can be obtained from Proposition \ref{prop:Epi_count}.

\bigskip

Examples of the above Theorem \ref{thm:CFF_general_passport_formula} and \ref{thm:unrooted_QU} can be found in Appendix \ref{ch:example}.

\bigskip

In fact, our method can obtain the number of unrooted maps from the number of rooted maps for general passports.
This is equivalent to obtaining unweighted Hurwitz numbers from weighted Hurwitz numbers.
However, it should be noted that weighted Hurwitz numbers for general passports are still not easy to obtain.

This method is inspired by Mednykh-Nedela \cite{MedNed06, MedNed10}.
They only restrict the genus $g$ and the number of edges $n$, while this paper further restricts the passport.
This paper mainly follows the ideas of \cite{MedNed06}.

\begin{theorem} \label{thm:rooted_to_unrooted}
Given an unlabelled passport $\pp$. The number of unrooted maps corresponding to $\pp$ is related to the numbers of rooted maps for all ``divided'' passports $\ppsi$
\begin{equation}
    \abs{\Mc(\pp)} = 
    \frac1n \sum_{l \, |\, n} \sum_{g = \sigma \times l} \sum_{\pp = \sigma \times_l \ppsi} 
    \abs{\Mc_R(\ppsi)} \cdot \abs{\Epi_0 (\Gamma(\sigma), \ZZ_l)} \ .
\end{equation}
\end{theorem}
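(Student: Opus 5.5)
We follow the Mednykh--Nedela scheme of \cite{MedNed06}: apply Burnside's lemma to the action of the cyclic group on rootings, then identify each fixed-point set with rooted maps on a quotient orbifold. Concretely, encode a map with passport $\pp$ as a transitive triple of permutations $(\alpha,\beta,\gamma)$ on the edge set $[n]$, with $\alpha\beta\gamma=\id$ and cycle types $\pi_1,\pi_2,\pi_3$. Here $\alpha$ rotates edges around black vertices, $\beta$ around white vertices, and $\gamma$ around faces. A rooted map is then a labelled triple up to conjugations fixing the root, and $\Mc(\pp)$ is the set of orbits of $\Mc_R(\pp)$ under re-rooting.

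\textbf{Step 1 (Burnside).} We count $\abs{\Mc(\pp)}$ as the number of orbits of rooted maps under the natural ``change of root'' action, weighted appropriately. Equivalently, we sum over maps $M$ and over automorphisms $\varphi \in \Aut(M)$:
\[
\abs{\Mc(\pp)}=\frac1n\sum_{M\in\Mc_R(\pp)}\ \sum_{\ldots}
\]
More precisely, we use
\[
\abs{\Mc(\pp)}=\sum_{M}\frac{1}{\abs{\Aut M}}\sum_{\varphi\in\Aut M}1 .
\]
Since $\Aut(M)$ acts freely on edges, every automorphism $\varphi$ generates a cyclic group $\ZZ_l$ with $l\mid n$, acting freely on darts. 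Regrouping by the cyclic subgroup $\langle\varphi\rangle\cong\ZZ_l$ together with a chosen generator, and using \eqref{eq:rooted_and_nonrooted}, we get
\[
\abs{\Mc(\pp)}=\frac1n\sum_{l\mid n}\#\{(M,e,\varphi): M \text{ rooted},\ \varphi\in\Aut M \text{ of order } l\}.
\]

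\textbf{Step 2 (quotient).} For each such pair $(M,\langle\varphi\rangle)$, the quotient $S_g/\ZZ_l$ is an orbifold $O(\sigma)$, and $g=\sigma\times l$ by the Riemann--Hurwitz condition (Theorem \ref{thm:g_l_sigma_admissible}). The map descends to a quotient map with $n/l$ edges. Its black vertices, white vertices and faces are marked by the order $t$ of their stabilizer, i.e.\ by the cone point they sit on. A vertex of weight $w$ in the quotient sitting on a cone point of order $t$ lifts to $l/t$ vertices of weight $wt$. This is exactly \eqref{eq:sigma_pp_multi}, giving a labelled passport $\ppsi$ with $\pp=\sigma\times_l\ppsi$. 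The root descends to a root of the quotient.

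\textbf{Step 3 (reconstruction).} Conversely, a rooted quotient map in $\Mc_R(\ppsi)$ together with an epimorphism $\Gamma(\sigma)\to\ZZ_l$ determines the lift. The epimorphism must be order-preserving on cone points, i.e.\ lie in $\Epi_0$, which is the condition making the covering surface smooth of genus $g$. The lift is a rooted map with an automorphism of order $l$ equal to the deck generator $1\in\ZZ_l$. I would show this correspondence is a bijection. In the forward direction, the monodromy of the regular covering $S_g\to O(\sigma)$ gives an element of $\Epi_0$. The root lifts uniquely once the lift of the base root is fixed, which is exactly rooting $M$. Summing over $\sigma$ and $\ppsi$ then yields the formula.

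\textbf{Main obstacle.} The hardest part is Step 3's bookkeeping. The difficulty is that the lift is not a manifold covering near vertices or face-centres: a vertex fixed by the stabilizer is a cone point of the quotient. We must check that the partition \eqref{eq:sigma_split} of the cone points among vertices and faces, with labels $(w_{ij},t_{ij})$, counts each configuration exactly once. In particular, the labelled rooted count $\abs{\Mc_R(\ppsi)}$ distinguishes which cone point carries which vertex, matching the fixed assignment of $\Gamma(\sigma)$-generators. I would handle this by working entirely with permutation triples and verifying that conjugacy classes of the $\ZZ_l$-action correspond freely to rootings. The local stabilizer computation, where the stabilizer of a vertex of weight $wt$ is cyclic of order $t$ and rotates it, should settle both the weight rule and the order-preserving condition.
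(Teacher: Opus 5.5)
Your proposal is correct, but its outer counting layer differs from the paper's. The paper applies Burnside's lemma to the conjugation action of $\Sf_E$ on totally numbered maps $\Mc_N(\pp)$. It notes that only permutations $\phi_{l,m}$ of cycle type $(l^m)$ can have fixed points, and it proves a bijection
\[
\fix(\pp;\phi_{l,m}) \cong \bigsqcup_{\sigma}\bigsqcup_{\ppsi} \Mc_N(\ppsi)\times\Epi_0(\Gamma(\sigma),\ZZ_l)\times\ZZ_l^{m-1} \ .
\]
The extra factor $\ZZ_l^{m-1}$ records where, in the fibre over each numbered quotient edge, the chosen lift lands (normalised by $a(1)=\id$). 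The constants $\frac{1}{m!\,l^m}$, $l^{m-1}$ and $(m-1)!$ (the last from $\abs{\Mc_N}=(m-1)!\abs{\Mc_R}$) then collapse to $\frac1n$.

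You instead use the reweighted form of \eqref{eq:rooted_and_nonrooted}, namely $n\abs{\Mc(\pp)}=\sum_{(M,e)}\abs{\Aut M}$. You pair it with a direct bijection between pairs (rooted map, automorphism of order $l$) and pairs (rooted quotient map, order-preserving epimorphism), which is the original Mednykh--Nedela formulation. This avoids the auxiliary $\ZZ_l^{m-1}$ data and all the constant bookkeeping. Keeping only a root is exactly what removes the freedom $(a(i))_{i\geq 2}$. The choice of lift of the root is immaterial, because two lifts differ by a power of $\varphi$, which commutes with $\varphi$. The paper's numbered version, in exchange, fixes one edge set once and for all, so the lifted $x,y$ are given by explicit formulas. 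Injectivity then reduces to a short computation using transitivity of $\langle \ol{x},\ol{y}\rangle$.

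The obstacle you flag is milder than you suggest. Since $\ZZ_l$ is abelian, homomorphisms from the orbifold fundamental group factor through its abelianization. So the monodromy of the regular cover is independent of the base point and of any choice of standard generators. The order-preserving condition simply says that the small loop around each cone point of order $t$ maps to an element of order exactly $t$. Hence each rooted quotient map has exactly $\abs{\Epi_0(\Gamma(\sigma),\ZZ_l)}$ lifts, however the cone points are matched with the generators $x_i$.
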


\begin{remark}
Here the unlabelled passport can be replaced by a general passport $\pp$, 
while the notions of ``passport labelled by the orbifold symbol'' and ``multiplication of an orbifold symbol and passport'' in $\pp = \sigma \times_l \ppsi$ need to be extended to the labelled case.
Then Theorem \ref{thm:rooted_to_unrooted} still holds.

Specifically, in the definition \eqref{eq:sigma_labeled_passport} of a passport labelled by the orbifold symbol, one can replace the weight-order pair $(w_{ij}, t_{ij})$ by an index-order pair $(s_{ij}, t_{ij})$, where different pairs $(s_{ij}, t_{ij})$ may correspond to the same weight and cone point order, but the labels are still distinct.

Moreover, the passport $\pp$ containing only three partitions can be generalized to one with arbitrary number of partitions. 
Here the map is replaced by constellation. 
Theorem \ref{thm:rooted_to_unrooted} still holds in this case, because we do not actually use the condition of ``three'' partitions here.
\end{remark}

\bigskip
%%%%% subsection %%%%%
\subsection{Structure of the Manuscript}

The following is the structure of this manuscript. The structure of each section can also be found at the beginning of each section.

We first prove Theorem \ref{thm:CFF_general_passport_formula}, i.e., the number of rooted quasi-one-face maps or weighted Hurwitz numbers.
The proof route can be seen on the left side of Figure \ref{fig:proof_line}.

In Section \ref{ch:map_and_passport}, 
Proposition \ref{prop:QU_cong_UW} shows that quasi-one-face maps can be reduced to weighted one-face maps. 
Proposition \ref{prop:Fill_rooted_eq_fact_rooted} shows that rooted maps corresponding to a general passport can be reduced to the totally labeled rooted maps.
Thus we can study only totally labeled rooted weighted one-face maps.

In Section \ref{ch:rooted_quasi-unicellular_map_enum}, we first define C-decorated trees.
By Theorem \ref{thm:CFF_ppl}, the number of totally labeled rooted weighted one-face maps is reduced to the number of totally (cycles-) labelled weighted C-decorated trees.
Then by Proposition \ref{prop:passport_tree_bij}, it is further reduced to totally labeled weighted trees.
Finally, by Theorem \ref{thm:YYK_formula}, the number of trees can be computed. 
This completely solves the problem of counting rooted quasi-one-face maps.
As an application of this theorem, we also prove the Hurwitz existence problem for positive genus with $\pi_3 = (m\ 1^{n-m})$ (Corollary \ref{cor:exist_quasi-unicellular_map}).

\bigskip

Then we prove Theorem \ref{thm:unrooted_QU} and Theorem \ref{thm:rooted_to_unrooted}, i.e., the number of unrooted maps or Hurwitz numbers.
The proof route can be seen on the right side of Figure \ref{fig:proof_line}.

Section \ref{ch:algebraic_map} introduces algebraic maps to represent maps on surfaces (Proposition \ref{prop:map_and_algebraic_map}), and transforms them into group action orbits (Lemma \ref{lem:group_action_on_numbered_map}), so that Burnside's lemma can be used to convert the number of unrooted maps into the number of symmetric maps (Lemma \ref{lem:unrooted_from_fix_enum}).
The symmetric maps will be studied via their quotient maps.

Section \ref{ch:map_cover_and_orbifold_map} studies how symmetric maps become quotient maps.
The symmetry gives a regular cover, the quotient map will be the map under the regular cover and lies on the quotient surface, i.e., the orbifold (Definition \ref{def:map_regular_cover}).
Thus it is necessary to study maps on orbifolds.
The cone point orders at the vertices or faces can be transformed into labels, so map on orbifold can actually be regarded as a labelled map (Definition \ref{def:orbifold_map_passport}).
Finally, we obtain the conditions that the quotient map of a symmetric map must satisfy (Lemma \ref{lem:symmetric_map_to_orbifold_map}).

Section \ref{ch:unrooted_map_enum} studies how to construct symmetric maps in reverse given a quotient map.
The specific method is to expand the quotient map to the universal cover map, and then restrict it to the fundamental domain of the original surface. 
It yields a symmetric map on the original surface (Lemma \ref{lem:orbifold_map_to_symmetric_map}).
Finally, Theorem \ref{thm:fix_enum} states that the correspondence between symmetric maps and quotient maps, thus proving that the number of unrooted maps can be computed from the number of rooted maps on orbifolds.

\bigskip

Appendix \ref{ch:labeled_weighted_tree_enum} gives related results for quasi-one-face maps on the sphere.
Appendix \ref{ch:epi_enum} gives related results for the number of order-preserving epimorphisms needed for unrooted map enumeration.
Appendix \ref{ch:example} gives some examples of the computation and application of Theorem \ref{thm:CFF_general_passport_formula} and Theorem \ref{thm:unrooted_QU}.

\vspace*{2em}

\begin{figure}[ht]
\centering
\begin{tikzpicture}
[
    arrow/.style={-stealth, thick}
]

\footnotesize{
\node (A1) at (0, 0) {Theorem \ref{thm:CFF_general_passport_formula}};
\node (B1) at (0, -1) {Rooted quasi-one-face map $\Mc_R(\pp)$};
\node (U1) at (0, -2.8) [align=center] {Totally labeled rooted weighted \\ one-face map $\Uc_{R, W}(\ppl)$};
\node (C1) at (0, -4.6) {C-decorated tree $\Cc_{R, W}(\ppl)$};
\node (D1) at (0, -6.4) {Totally labeled weighted tree $\Uc_{R, W}(\ppzl)$};
\node (E1) at (0, -7.4) {Theorem \ref{thm:YYK_formula}, Proposition \ref{prop:weighted_tree_no_isom}};

\node (A2) at (7, 0) {Theorem \ref{thm:unrooted_QU}, \ref{thm:rooted_to_unrooted}};
\node (B2) at (7, -1) {Unrooted quasi-one-face map $\Mc(\pp)$};
\node (C2) at (7, -2.8) {Symmetric map $\fix(\pp; \phi_{l, m})$};
\node (D2) at (7, -7.4) [align=center] 
{Rooted quasi-one-face maps \\ on orbifold $\Mc_R(\ppsi)$};
}

\tiny{
\draw[double, double distance=2pt, line width=0.6pt] (A1) -- (B1);

\draw[arrow] (B1) -- node[anchor=east, xshift=-0.5em] {Proposition \ref{prop:QU_cong_UW}, \ref{prop:Fill_rooted_eq_fact_rooted}} (U1);

\draw[arrow] (U1) -- node[anchor=east, xshift=-0.5em] {Theorem \ref{thm:CFF_ppl}} (C1);

\draw[arrow] (C1) -- node[anchor=east, xshift=-0.5em, align=right] 
{
    Lemma \ref{lem:cycle_data_split}\\[0.5em]
    Proposition \ref{prop:passport_tree_bij}
} (D1);

\draw[double, double distance=2pt, line width=0.6pt] (D1) -- (E1);

\draw[double, double distance=2pt, line width=0.6pt] (A2) -- (B2);

\draw[arrow] (B2) -- node[anchor=west, xshift=0.5em] {Lemma \ref{lem:unrooted_from_fix_enum}} (C2);

\draw[arrow] (C2) -- node[anchor=west, xshift=0.5em, align=left] 
{
  Lemma \ref{lem:symmetric_map_to_orbifold_map}:\\
  symmetric $\to$ on orbifold \\[0.6em]
  Lemma \ref{lem:orbifold_map_to_symmetric_map}:\\
  on orbifold $\to$ symmetric \\[0.6em]
  Theorem \ref{thm:fix_enum}:\\
  bijective correspondence
} (D2);

\draw[arrow, rounded corners=5pt] (D2) -- (3.7, -7.4) -- (3.7, -1) -- (B1);

}

\draw[dashed, rounded corners=5pt] (-3.3, 0.4) rectangle (3.3, -7.9);
\draw[dashed, rounded corners=5pt] (4.1, 0.4) rectangle (10.8, -7.9);

\end{tikzpicture}
\caption{Proof Route} \label{fig:proof_line}
\end{figure}

\bigskip
%%%%% section %%%%%
\section{Weighted One-Face Maps and Filling of Passports} \label{ch:map_and_passport}

This section supplements the content of maps and passports not mentioned in the introduction.
Section \ref{sec:QU_and_UW} presents another form of quasi-one-face maps, namely weighted one-face maps (Proposition \ref{prop:QU_cong_UW}).
These two forms play different roles in different contexts.
Section \ref{sec:rooted_map_and_filling} discusses the relation between total labeling and general rooted hypermaps (Proposition \ref{prop:Fill_rooted_eq_fact_rooted}).
%This proposition tells us that in the rooted case, regardless of the labeling, the corresponding graph counts differ only by a multiplicative factor.
Finally, these two propositions will show that the counting of rooted quasi-one-face maps can be reduced to the counting of rooted totally labeled weighted one-face maps.

%%%%% subsection %%%%%
\subsection{Quasi-One-Face Maps and Weighted One-Face Maps} \label{sec:QU_and_UW}

Recall the definition of map $M = (S_g; E, U_k)$ and quasi-one-face passport $\ppqu = (g, m, n; \Pi_1, \Pi_2) \\ := (g, n; \Pi_1, \Pi_2, \pi_3 = (m\ 1^{n-m}))$.
Since $\pi_3 = (m\ 1^{n-m})$, the corresponding \textbf{quasi-one-face map} is a map with only one special face $f_0$ of weight $m$ and all other faces of weight $1$.
A \textbf{one-face map} is a map with only one face $U_3 = \{f_0\}$.

We additionally define a \textbf{totally labeled} quasi-one-face map without restriction on the passport as a map $M = (S_g; E, U_k; \Lab_k)$ with bijective labeling functions $\Lab_k : U_k \to [\abs{U_k}], k=1,2$ assigned to vertices.

Given only the genus $g$ and the number of edges $n$, we denote by $\Mc(g, n)$ and $\Uc(g, n)$ the sets of all maps and all one-face maps, respectively.
The corresponding rooted maps and totally labeled rooted maps are written in subscripts, e.g., $\Uc_L(g, n), \Mc_{R, L}(g, n)$, etc.

\bigskip

Note that in a quasi-one-face map, the faces of weight $1$ are digons whose edges are homotopic. 
The edges forming a digon can be ``glued'' into a weighted edge.
It will eliminate all the digons.
Finally, we obtain a bijection between quasi-one-face maps and so-called \textbf{weighted one-face maps}.

%\bigskip

\begin{definition} \label{def:weighted_unic_map}
    The \textbf{weighted one-face map} corresponding to a quasi-one-face passport $\ppqu = (g, m, n; \Pi_1, \Pi_2)$ consists of a one-face map $M = (S_g; U_1 \sqcup U_2, E, \{f_0\}; \Lab_k)$ of genus $g$ with $m$ edges, and a weight function $\wt_E : E \to \Zpos$ on the edges. 
    The weight function $\wt_E$ on edges naturally induces a weight function on vertices
    $$
        \wt : V = U_1 \sqcup U_2 \to \Zpos \ ,\ v \mapsto \sum_{v \in e} \wt_E(e) \ .
    $$

    Similar to Definition \ref{def:labeled_graph}, the weight function defined in this way must be compatible with the weight function in the passport at vertices (but not necessarily at faces), and the multiplicity functions must also be compatible:
    $$
        \wt = \wt_k \circ \Lab_k \ ,\  \abs{\Lab_k^{-1}(s)} = \lambda_k(s) \ ,\ k=1,2 \ .
    $$

    Finally, we see that the total edge weight equals $n$.
\end{definition}

Denote by $\Uc_W(\ppqu)$ the set of all weighted one-face maps corresponding to the quasi-one-face passport $\ppqu$. 
The set of all weighted one-face maps with genus $g$, $m$ weighted edges and $n$ total edge weight is $\Uc_W(g, m, n)$.
Of course, we can also define set like $\Uc_{R, W}(\ppqu), \Uc_{R, W}(g, m, n)$ and $\Uc_{R, W, L}(g, m, n)$, etc.

%\bigskip

\begin{proposition} \label{prop:QU_cong_UW}
    There is a canonical bijection between quasi-one-face maps and weighted one-face maps.
    That is to say, $\Uc_W(\ppqu) \cong \Mc(\ppqu)$ when $\ppqu$ is a quasi-one-face passport.
\end{proposition}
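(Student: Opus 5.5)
We construct mutually inverse maps $\Phi : \Mc(\ppqu) \to \Uc_W(\ppqu)$ (gluing digons) and $\Psi : \Uc_W(\ppqu) \to \Mc(\ppqu)$ (thickening weighted edges), and check that both respect isomorphism classes.

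\textbf{Gluing digons.} Let $M \in \Mc(\ppqu)$ have special face $f_0$ of weight $m$ and $n-m$ faces of weight $1$. Each weight-$1$ face is a disk bounded by two edges $e, e'$ joining the same black vertex $b$ and white vertex $w$, so $e$ and $e'$ are consecutive in the rotations at $b$ and at $w$. Declare two edges equivalent when they bound a common digon, and take the transitive closure. Each class is a maximal chain $e_1, \dots, e_k$ of consecutive parallel edges between $b$ and $w$, with digons between $e_i$ and $e_{i+1}$. Collapse each class, together with the closed digons between its edges, to a single edge of weight $k$. The union of a chain with its digons is a closed disk meeting the rest of the graph only at $b$ and $w$, so the collapse is a homeomorphism of $S_g$ up to isotopy.

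\textbf{The main obstacle} is the degenerate case where a chain closes up cyclically, i.e.\ the digons around $b$ exhaust all edges at $b$ and $w$. Then the union of the digons is a sphere, so $M$ is the map on $S_0$ with one black vertex, one white vertex and $n$ parallel edges. This forces every face to be a digon, so $m = 1$ and $n \ge 2$; the case $n=1$ has no digons at all. Here the collapse would delete the face $f_0$. We exclude this by singling out $f_0$ and requiring that its two boundary sides stay uncollapsed: a face of weight $1$ that is $f_0$ itself (when $m=1$) is not glued. Chains are therefore cut at $f_0$, and the resulting cyclic sequence of digons becomes a linear chain of $n$ edges, which collapses to one edge of weight $n$. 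With this convention the collapse is always well defined.

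\textbf{Properties of $\Phi(M)$.} After collapsing, the only remaining face is $f_0$, and its boundary has $2m$ sides, so $\Phi(M)$ is a one-face map with $m$ edges. Its genus is unchanged by Euler's formula: we lose $n-m$ edges and $n-m$ faces. The sum of edge weights at a vertex equals its original degree, so vertex weights and labels $\Lab_1, \Lab_2$ are preserved, and the total edge weight is $n$. Hence $\Phi(M) \in \Uc_W(\ppqu)$.

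\textbf{Thickening edges.} Conversely, for $N \in \Uc_W(\ppqu)$, replace each edge $e$ of weight $k$ by $k$ parallel edges drawn inside a thin neighbourhood of $e$, creating $k-1$ digons. This gives a map with one face of weight $m$, $n-m$ faces of weight $1$, the same vertex degrees and labels, and genus $g$. So $\Psi(N) \in \Mc(\ppqu)$.

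\textbf{Inverse and well-definedness.} By construction $\Phi\circ\Psi$ and $\Psi\circ\Phi$ are the identity up to isotopy. In particular, thickening followed by gluing recovers the original weights, because the maximal chains are exactly the thickened bundles: distinct edges of $N$ never bound a digon in $\Psi(N)$, since $N$ has the single face $f_0$. Finally, both constructions are canonical, i.e.\ determined by the combinatorial structure. An orientation-preserving homeomorphism realizing an isomorphism on one side therefore induces one on the other side, so $\Phi$ and $\Psi$ descend to mutually inverse bijections of isomorphism classes. This gives $\Uc_W(\ppqu) \cong \Mc(\ppqu)$.
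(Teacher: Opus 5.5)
Your proposal is correct and follows essentially the same route as the paper: glue the parallel edges bounding weight-$1$ digons into one edge whose weight is the size of the class, thicken each weighted edge back into parallel edges, and check that the two constructions are mutually inverse and compatible with isomorphisms. The differences are only in presentation. You generate the equivalence from digon adjacency and collapse the digons, whereas the paper uses ``homotopic with fixed endpoints'' and keeps one representative edge per class; the two relations agree on quasi-one-face maps. You also treat the cyclic $m=1$ sphere case explicitly by not gluing $f_0$, which the paper achieves implicitly by giving $f_0$ a distinct label when $m=1$ and gluing only faces other than $f_0$.
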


\begin{proof}
For a quasi-one-face map $M_{QU} = (S_g; E, U_k)$, every face $f \in U_3 \setminus \{f_0\}$ other than $f_0$ is a digon. 
Then the two incident edges $e_1, e_2$ of such a face $f$ are homotopic with fixed endpoints. 
This homotopy equivalence relation gives a set of equivalence classes $\ol{E}$. 
Pick one representative edge in each equivalence class $\ol{e} \in \ol{E}$, still denoted by $\ol{e}$, and take the number of elements in the equivalence class as the edge weight $\wt_{\ol{E}}(\ol{e}) = |\ol{e}|$.
This constructs a weighted one-face map $M_W = (S_g; U_1 \sqcup U_2, \ol{E}, \{f_0\}; \wt_{\ol{E}})$. 

Since the edges in an equivalence class $\ol{e}$ are all homotopic with fixed endpoints, they are also isotopic and preserve the vertex set $U_k$. 
Thus the choice of different representative edges in $\ol{e}$ yields equivalent weighted one-face maps. 

\bigskip

Conversely, given a weighted one-face map $M_W = (S_g; U_1 \sqcup U_2, \ol{E}, \{f_0\}; \wt_{\ol{E}})$, for each edge $\ol{e} \in \ol{E}$ we take its neighborhood and draw $(\wt_{\ol{E}}(\ol{e}) - 1)$ additional edges parallel to it within the neighborhood. 
We still denoting by $\ol{e}$ the set consisting of the original edge and the newly drawn edges. 
Let the set of all edges be $E = \bigsqcup \ol{e}$. 
Edges in $\ol{e}$ pairwise enclose $(\wt_{\ol{E}}(\ol{e}) - 1)$ digons, and all digons together with the face $f_0$ form the face set $U_3$. 
Thus we obtain a quasi-one-face map $M_{QU} = (S_g; E, U_k)$.

\bigskip

These two constructions are inverse to each other. Moreover, equivalent $M_{QU}$ correspond to equivalent $M_W$.
Hence there is a canonical bijection between quasi-one-face maps and weighted one-face maps.
Furthermore, quantities such as the genus $g$, the weight of the face $f_0$ (the number of weighted edges) $m$, the number of edges (total edge weight) $n$, the weights of vertices $\wt$, and the labelings $\Lab_k$ are preserved under the construction, thereby proving the statement when the passport $\ppqu$ is given.
\end{proof}

%\bigskip

Recall the definition of a quasi-one-face passport $\ppqu = (g, m, n; \Pi_1, \Pi_2) := (g, n; \Pi_1, \Pi_2, \pi_3 = (m\ 1^{n-m}))$.
There are some additional points to note.
\begin{itemize}
    \item By Euler's formula $v(\ppqu) + (n-m+1) - n = 2 - 2g$, we obtain $m = v(\ppqu) + 2g - 1$.
    Besides, $1 \leq m \leq n$.

    \item For convenience, when $m = 1$, we require $\Pi_3 = (1_1\ 1_2^{n-1})$ to be a weight distribution containing a $1$ with a distinct label i.e., the label carried by the special face $f_0$.
    Since $m = 1$, we have $g = u_1(\ppqu) = u_2(\ppqu) = 1$, and it is easy to see that $\abs{\Mc(\ppqu)} = 1, \abs{\Mc_R(\ppqu)} = n$. 
    Also, $\abs{\Uc_W(\ppqu)} = \abs{\Uc_{R, W}(\ppqu)} = 1$. 

    \item In genus $0$ case, the spherical weighted one-face map is usually represented by a \textbf{plane weighted tree}. 
\end{itemize}

%%%%% subsection %%%%%
\subsection{Rooted maps and Filling of Passports} \label{sec:rooted_map_and_filling}

Recall that a \textbf{rooted map} is a pair $(M, e)$ consisting of a map and a root edge, whose isomorphism preserves the root edge.
Its automorphism group is trivial $\Aut(M, e) \cong \{\id\}$, so the elements of the vertex, edge, and face sets can all be distinguished.
Moreover, the enumeration of rooted maps is related to the weighted enumeration of unrooted maps
\begin{equation*}
    \frac1n \abs{\Mc_R(\pp)} 
    = \sum_{M \in \Mc(\pp)} \frac1{\abs{\Aut(M)}} \ . 
\end{equation*}

Note that for a quasi-one-face passport $\ppqu = (g, m, n; \Pi_1, \Pi_2)$,
the number of edges in the quasi-one-face maps $\Mc_R(\ppqu)$ is $n$, 
and the number of weighted edges in the weighted one-face maps $\Uc_{R, W}(\ppqu)$ is $m$.
Thus we obtain
\begin{equation} \label{eq:QUpassport_MR_and_URW}
      \frac1n \abs{\Mc_R(\ppqu)} 
    = \sum_{M \in \Mc(\ppqu)} \frac1{\abs{\Aut(M)}} 
    = \sum_{M \in \Uc_W(\ppqu)} \frac1{\abs{\Aut(M)}} 
    = \frac1m \abs{\Uc_{R, W}(\ppqu)} \ .
\end{equation}

We can describe the \textbf{filling} of a quasi-one-face passport in an equivalent way.
Recall that the originally defined filling replaces the weight distribution $\Pi_k$ with a sequence $\bmwt_k$ containing $\lambda_k(s)$ positive integers $\wt_k(s)$. 
Each such sequence represents a totally labeled weight distribution. 

\begin{definition}\label{def:filling_of_passport}
    The \textbf{filling} $\Fill(\Pi) = (S_F, \one, \wt_F)$ of a weight distribution $\Pi = (S, \lambda, \wt)$ is defined as
    \begin{equation}\begin{split}
        &S_F := \defsetsmall{(s,k) \in S \times \Zpos}{s \in S, 1 \leq k \leq \lambda(s)} \ , \\
        &\wt_F(s, k) := \wt(s) \ .
    \end{split}\end{equation}
    
    The filling of a quasi-one-face passport $\ppqu = (g, m, n; \Pi_1, \Pi_2)$ is $\Fill(\ppqu) := (g, m, n; \Fill(\Pi_1), \Fill(\Pi_2))$, where $\pi_3 = (m\ 1^{n-m})$ remains unchanged.
\end{definition}

Recall the factorial of a quasi-one-face passport
\begin{equation*}
    (\ppqu)! := \prod_{s \in S_1} \lambda_1(s) \cdot \prod_{s \in S_2} \lambda_2(s)  \ .
\end{equation*}
The following theorem states that for rooted quasi-one-face maps, the enumeration of maps 
with general passport and its filling, differ only by the passport factorial as a multiplicative factor.

\begin{proposition} \label{prop:Fill_rooted_eq_fact_rooted}
    Given a quasi-one-face passport $\ppqu = (g, m, n; \Pi_1, \Pi_2)$.
    The number of rooted maps corresponding to it and its filling $\Fill(\ppqu)$ are related by
    \begin{equation}
        \abs{\Mc_R(\Fill(\ppqu))} = (\ppqu)! \abs{\Mc_R(\ppqu)} \ .
    \end{equation}

\end{proposition}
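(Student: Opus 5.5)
The plan is to build a forgetful map $\Phi : \Mc_R(\Fill(\ppqu)) \to \Mc_R(\ppqu)$ and show that every fibre has exactly $(\ppqu)!$ elements. A rooted map with filled passport carries labels $(s,k)$, $1 \leq k \leq \lambda_k(s)$, on its black and white vertices. The map $\Phi$ forgets the second coordinate and keeps only $s$, while leaving the root edge and the faces unchanged. The weight compatibility $\wt_F(s,k) = \wt(s)$ and the multiplicity condition $\abs{\Lab^{-1}(s)} = \lambda(s)$ hold by construction. Hence $\Phi$ lands in $\Mc_R(\ppqu)$, and it respects isomorphism of rooted maps.

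Surjectivity is immediate. Given a rooted map $(M,e) \in \Mc_R(\ppqu)$, for each label $s \in S_k$ we choose any bijection between the $\lambda_k(s)$ vertices labelled $s$ and $\{1,\dots,\lambda_k(s)\}$. This produces a preimage.

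The fibre is described as follows. Fix one representative $(M,e)$. The lifts of $(M,e)$ correspond to the group $G = \prod_{k=1,2}\prod_{s \in S_k} \Sf_{\lambda_k(s)}$ acting by permuting the second coordinates of the labels, and this action on lifts is free and transitive. The key step is to show that distinct elements of $G$ give \emph{non-isomorphic} rooted filled maps. Suppose two lifts are isomorphic via an orientation-preserving homeomorphism $I$ preserving the root edge. Then $I$ is an automorphism of the rooted map $(M,e)$ with its original labels, so by the triviality $\Aut(M,e) \cong \{\id\}$ recalled above, $I$ acts as the identity on vertices. It follows that the two refined labellings coincide, so the fibre has exactly $\abs{G}$ elements.

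To make this rigorous, I would invoke the standard fact that an automorphism of a connected map fixing one edge (a dart, in the bicoloured setting) fixes every edge, and hence every vertex. This is also the point where the faces need care. In a quasi-one-face map the special face $f_0$ is the unique face of weight $m$ when $m>1$, and the digons are unlabelled, so no extra factor arises from $\Pi_3$. The convention for $m=1$, namely $\Pi_3 = (1_1\,1_2^{n-1})$, likewise contributes factorial $1$, which is consistent with $(\ppqu)!$ involving only $\Pi_1, \Pi_2$.

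The main obstacle is that $\abs{G} = \prod_s \lambda_1(s)! \cdot \prod_s \lambda_2(s)!$, whereas the paper's definition of $(\ppqu)!$ is literally written as $\prod_s \lambda_k(s)$. I expect the intended meaning to be the product of factorials, as the name suggests, and I would state the identity with $\lambda_k(s)!$. With that reading, summing the fibre sizes gives $\abs{\Mc_R(\Fill(\ppqu))} = (\ppqu)!\,\abs{\Mc_R(\ppqu)}$.
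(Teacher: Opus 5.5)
Your proposal is correct and follows the paper's own argument: the forgetful surjection $\ff_*\colon \Mc_R(\Fill(\ppqu))\to\Mc_R(\ppqu)$, with each fibre counted by the refinements of the vertex labelling. Your appeal to $\Aut(M,e)\cong\{\id\}$ to show that distinct refinements give non-isomorphic rooted maps makes explicit a step the paper leaves tacit. You are also right about the factorial: each fibre has $\prod_{s\in S_1}\lambda_1(s)!\cdot\prod_{s\in S_2}\lambda_2(s)!$ elements, whereas the paper's proof counts only $\prod_{s}\lambda_k(s)$ compatible bijections, which is wrong as soon as some $\lambda_k(s)\ge 3$ (the paper's worked examples all have $\lambda_k(s)\le 2$, where the two readings agree).
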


\begin{proof}
By the definition of the filling of a weight distribution $\Pi = (S, \lambda, \wt)$, we can naturally define a forgetting mapping $\ff$ that forgets the second component $k$ used for distinguishing in $(s, k)$
\begin{equation*}
    \ff : S_F \to S \ ,\ (s, k) \mapsto s \ .
\end{equation*}

Thus, for the weight distributions in the passports $\Fill(\ppqu)$ and $\ppqu$, we have two forgetting mappings $\ff_k : S_{F, k} \to S_k, k=1,2$, which induce a surjection between maps
\begin{equation*}\begin{split}
    \ff_* \ :\ \Mc_R(\Fill(\ppqu)) \qquad &\twoheadrightarrow \qquad \Mc_R(\ppqu) \\ 
    M_L = (S_g; E, U_k; \Lab_{L, k}) \ &\mapsto \ M = (S_g;  E, U_k; \ff_k \circ \Lab_{L, k})
\end{split}
\ .
\end{equation*}

$\ff_*$ does not change the geometric shape of the map; it only sends the labeling function $\Lab_{L, k}$ to $\ff_k \circ \Lab_{L, k}$.

Given $\Lab_1 : U_1 \to S_1$, it is easy to see that the number of bijections $\Lab_{1, L} : U_1 \to S_{U_3, 1}$ satisfying $\ff_1 \circ \Lab_{1, L} = \Lab_1$ is $\prod_{s \in S_1} \lambda_1(s)$.
Similarly, given $\Lab_2$, the number of bijections $\Lab_{2, L}$ satisfying $\ff_2 \circ \Lab_{2, L} = \Lab_2$ is $\prod_{s \in S_2} \lambda_2(s)$.
Thus, given $M$, there are $(\ppqu)!$ maps $M_L$ corresponding to it.
\end{proof}

\bigskip
%%%%% section %%%%%
\section{Enumeration of Rooted Quasi-One-Face Maps} \label{ch:rooted_quasi-unicellular_map_enum}

The aim of this section is to prove Theorem \ref{thm:CFF_general_passport_formula}, i.e., the enumeration of rooted quasi-one-face maps. 
The previous section indicated that this problem can be reduced to the enumeration of rooted totally labeled weighted one-face maps. 
Thus, we will first prove a version of Theorem \ref{thm:CFF_general_passport_formula} for totally labeled weighted one-face maps: Theorem \ref{thm:CFF_full_passport_formula}. 

In Sections \ref{sec:C-decorated_tree} and \ref{sec:CFF_bij}, we outline the C-decorated trees defined by Chapuy et al.\cite{CFF13} for studying one-face maps, and the combinatorial bijection between them (Theorem \ref{thm:CFF_original_bij}). 
This combinatorial bijection ``unfolds'' a one-face map of positive genus into a plane tree with some structures, called a C-decorated tree. 
We generalize the bijection to the totally labeled weighted case(Theorem \ref{thm:CFF_ppl}). 
And reduce the enumeration of weighted one-face maps to the enumeration of totally labeled weighted C-decorated trees (Theorem \ref{thm:CFF_ppl}).

In Section \ref{sec:C-decorated_tree_and_totally_labeled_tree}, we give the correspondence between totally labeled weighted C-decorated trees and totally labeled weighted trees.
The adjustment here arises because the labeled objects is different:  
cycles for C-decorated tree and vertices for ordinary tree. 
Finally, the enumeration of C-decorated trees can be reduced to the enumeration of totally labeled trees (Proposition \ref{prop:passport_tree_bij}). 
The latter one can be obtained via the formula \ref{thm:YYK_formula}. 

In Section \ref{sec:CFF_passport_formula}, based on the former transformations (weighted one-face map $\to$ C-decorated tree $\to$ totally labeled tree), we present the proof of Theorem \ref{thm:CFF_full_passport_formula} and then Theorem \ref{thm:CFF_general_passport_formula}. 
Moreover, using the enumeration Theorem \ref{thm:CFF_general_passport_formula}, we give a new simple proof of the Hurwitz existence problem in the case of positive genus and $\pi_3 = (m\ 1^{n-m})$ (Corollary \ref{cor:exist_quasi-unicellular_map}).

%%%%% subsection %%%%%
\subsection{C-Decorated Trees} \label{sec:C-decorated_tree}

C-decorated trees are plane trees decorated at vertices by C-permutations. Their definition is as follows.

\begin{definition} \label{def:C-permutation}
    A \textbf{C-permutation} $\tau^* := (\tau, \sg)$ of genus $g$ and vertex number $m+1$ is defined as follows:
    \begin{itemize}
        \item Take a permutation $\tau \in \Sf_{m+1}$ in the symmetric group of order $m+1$, requiring that the number of its cycles is $c(\tau) = m+1 - 2g$, and that each cycle has odd length;

        \item In the cycle decomposition $\tau = \alpha_1 \cdots \alpha_{c(\tau)}$, each cycle is assigned a sign $+$ or $-$.
        Concretely, let the \textbf{cycle set} be $\cyc(\tau) := \{\alpha_1, \ldots, \alpha_{c(\tau)}\}$, then a sign function $\sg : \cyc(\tau) \to \{+, -\}$ should be given.
    \end{itemize}
\end{definition}

An example of a C-permutation is $\tau^*_1 = \ ^+(1) \ ^-(2,5,6) \ ^-(3) \ ^+(4)$.
However, $\tau^*_2 = \ ^+(1,2,3) \  ^-(4,5) \  ^-(6)$ is not a C-permutation because not all cycles have odd length.

We use the function $\mu : \cyc(\tau) \to \NN$ to record the length of each cycle in the C-permutation $\ell(\alpha) =2 \mu(\alpha) + 1$, so that the cycle length is automatically odd.

%\bigskip

\begin{definition} \label{def:C-decorated_tree}
    A \textbf{C-decorated tree} $(T, e_T, \tau^*)$ of genus $g$ and edge number $m$ consists of an unlabeled rooted tree $(T, e_T) \in \Uc_R(0, m)$ with $m$ edges and a C-permutation $\tau^*$ of genus $g$ on $m+1$ vertices. 
    Since C-decorated trees are required to be rooted, their vertices are distinguishable. 
    The C-permutation $\tau^*$ on it acts as a permutation $\tau \in \Sf_{V}$ on the vertex set, and it is required that vertices within the same cycle have the same color. 
\end{definition}

The color of the vertices in a cycle also gives the color (not the sign) of the cycle $\cyc(\tau) = \cyc_1(\tau) \sqcup \cyc_2(\tau)$. The number of cycles of each color is denoted by $c_k(\tau) = \abs{\cyc_k(\tau)}, k=1,2$.

The set of all C-decorated trees of genus $g$ and edge number $m$ forms the set $\Cc_{R}(g, m)$.

\bigskip

We wish to consider fully (cycle-)labeled C-decorated trees. Here, labeling means giving a bijection $\Lab_k : \cyc_k(\tau) \to [c_k(\tau)], k=1,2$ on the cycle set.
The set of such \textbf{totally labeled C-decorated trees} $(T, e_T, \tau^*; \Lab_k)$ is denoted by $\Cc_{R, L}(g, m)$.

Given the labeling function $\Lab_k$, we can arrange all cycles, i.e., define the cycle $\alpha^{(k)}_i := \Lab_k^{-1}(i)$.
Thus, the $i$-th cycle in $\cyc_1(\tau) = \{\alpha^{(1)}_1, \ldots, \alpha^{(1)}_{c_1(\tau)}\}$ is $\alpha^{(1)}_i$, and similarly for white cycles.

\bigskip

Next, we also consider totally labeled weighted C-decorated trees, i.e., assigning a positive integer weight $\wt_E : E \to \Zpos$ to each edge as in Definition \ref{def:weighted_unic_map}.
Equivalently, the $(T, e_T) \in \Uc_R(0, m)$ in Definition \ref{def:C-decorated_tree} can be replaced by taking a weighted tree $((T, \wt_E), e_T) \in \Uc_{R, W}(0, m, n)$.
The set of \textbf{totally labeled weighted C-decorated trees} $(T, e_T, \tau^*; \Lab_k, \wt_E)$ of genus $g$, weighted edge number $m$, and total weight $n$ is denoted by $\Cc_{R,W, L}(g, m,n)$.

Similarly, at the vertices of a totally labeled weighted C-decorated tree, there is naturally a weight function $\wt_{T} : V \to \Zpos, v \mapsto \sum_{v \in e} \wt_E(e)$. Now we define the \textbf{weight on a cycle} as the sum of the weights at the vertices it acts on: let the set of vertices acted on by the cycle $\alpha$ be $V(\alpha)$, then 
$$
    \wt_C : \cyc(\tau) \to \Zpos, \alpha \mapsto \sum_{v \in V(\alpha)} \wt_T(v) \ .
$$

%\bigskip

Finally, we can define the \textbf{totally labeled passport} $\ppl = (g, m, n; \bmwt_{C, 1}, \bmwt_{C, 2})$ of a totally labeled weighted C-decorated tree to be the requirement that the black cycle $\alpha^{(1)}_i$ labeled $i$ has weight $\wt_C(\alpha^{(1)}_i) = \wt_{C, 1}(i)$, where the sequence $\bmwt_{C, 1} = (\wt_{C, 1}(i))_{i=1}^{c_1(\tau)}$.
Similarly, the weights of the white cycles are required to be compatible with the weights in the passport $\wt_C(\alpha^{(2)}_i) = \wt_{C, 2}(i)$.
The set of totally labeled weighted C-decorated trees with passport $\ppl$ is denoted by $\Cc_{R, W}(\ppl)$.

%\bigskip

\begin{definition} \label{def:cycle_data}
    The \textbf{cycle data} $\Lambda = (\bmmu_1, \bmmu_2)$ of a totally labeled weighted C-decorated tree $(T, e_T, \tau^*; \Lab_k, \wt_E) \in \Cc_{R,W, L}(g, m,n)$ consists two sequences.
    The sequences
    \begin{equation}
        \bmmu_k = (\mu_{k, j})_{j=1}^{c_k(\tau)} := (\mu(\alpha^{(k)}_j))_{j=1}^{c_k(\tau)} \ ,\ 
        k = 1,2 
    \end{equation}
    record the lengths of the black and white cycles respectively. 
    Recall that the $\mu$ function determines the cycle length $\ell(\alpha) = 2\mu(\alpha) + 1$.
\end{definition}

Let $\Cc_{R, W, L}(g, m, n; \Lambda)$ and $\Cc_{R, W}(\ppl; \Lambda)$ denote corresponding the totally labeled weighted C-decorated trees with cycle data $\Lambda$.

Since the C-permutation is taken in the symmetric group of order $m+1$,
\begin{equation*}\begin{split}
    m+1 
    &= \sum_{i=1}^{c_1(\tau)} (2 \mu_{1,i} + 1) + \sum_{i=1}^{c_2(\tau)} (2 \mu_{2,i} + 1)  \\
    &= 2 \norm{\bmmu_1}  + 2 \norm{\bmmu_2} + c(\tau) 
    = 2 \norm{\bmmu_1}  + 2 \norm{\bmmu_2} + m+1-2g \ ,
\end{split}\end{equation*}
thus the cycle data must satisfy the condition $\norm{\bmmu_1} + \norm{\bmmu_2} = g$, recall that this condition is $\Lambda \vdash g$.
Another condition is that the lengths of the two sequences in the cycle data equal the number of vertices $\abs{\bmmu_k} = u_k(\pp), k=1,2$, and $\abs{\bmmu_1} + \abs{\bmmu_2} = m+1 - 2g$. 

\begin{lemma} \label{lem:cycle_data_split}
    Cycle data can be used to finely classify C-decorated trees
    \begin{equation}
        \Cc_{R, W, L}(g, m, n) 
        = \bigsqcup_{\Lambda \vdash g} 
        \Cc_{R, W, L}(g, m, n; \Lambda) \ ,\ 
        \Cc_{R, W}(\ppl) 
        = \bigsqcup_{\Lambda \vdash g} 
        \Cc_{R, W}(\ppl; \Lambda) \ .
    \end{equation}
    where $\Lambda = (\bmmu_1, \bmmu_2)$ ranges over all pairs of sequences of nonnegative integers satisfying $\Lambda \vdash g$ and the conditions $\abs{\bmmu_1} + \abs{\bmmu_2} = m+1 - 2g$ or$\abs{\bmmu_k} = u_k(\pp), k=1,2$.
    \qed
\end{lemma}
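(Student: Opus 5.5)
The plan is to show that the map sending a totally labeled weighted C-decorated tree to its cycle data is a well-defined function into the stated index set. Once that holds, the set $\Cc_{R, W, L}(g, m, n)$ is partitioned by its fibres. Each fibre is by definition $\Cc_{R, W, L}(g, m, n; \Lambda)$, which gives the disjoint union. The second decomposition then follows by intersecting with the subset $\Cc_{R, W}(\ppl)$.

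Concretely, take $(T, e_T, \tau^*; \Lab_k, \wt_E)$. The labelings $\Lab_k$ order the black and white cycles as $\alpha^{(k)}_1, \ldots, \alpha^{(k)}_{c_k(\tau)}$. By Definition \ref{def:C-permutation}, each cycle has odd length, so $\mu(\alpha^{(k)}_j) \in \NN$ is well defined. Hence the sequences $\bmmu_k$ of Definition \ref{def:cycle_data} are determined uniquely by the object. Isomorphic objects give the same $\Lambda$, because the labels and cycle lengths are part of the data preserved by isomorphism. This already shows the pieces are pairwise disjoint: one object cannot carry two different cycle data.

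Next I would verify the constraints. The computation displayed just before the lemma uses $m+1 = \sum_\alpha \ell(\alpha)$ together with $c(\tau) = m+1-2g$. It gives $\norm{\bmmu_1}+\norm{\bmmu_2} = g$, that is, $\Lambda \vdash g$. The lengths are $\abs{\bmmu_k} = c_k(\tau)$, so $\abs{\bmmu_1}+\abs{\bmmu_2} = c(\tau) = m+1-2g$.

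In the passport case $\ppl$, the passport fixes the number of labeled black and white cycles to be the lengths of $\bmwt_{C,k}$, namely $u_k(\pp)$. So $\abs{\bmmu_k} = u_k(\pp)$. Hence every object lies in exactly one piece indexed by an admissible $\Lambda$. Conversely, each piece is a subset of the whole set by definition, so the union covers exactly the original set.

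There is no real obstacle here. The only point needing care is bookkeeping: the range of $\Lambda$ must match the indexing used later, with the lengths $\abs{\bmmu_k}$ fixed by the cycle counts. Pieces with admissible $\Lambda$ but no realizing tree are simply empty, which is harmless in a disjoint union.
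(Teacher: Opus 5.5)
Your proposal is correct and is essentially the paper's own argument, which the paper leaves implicit behind the \qed. The cycle data $\Lambda$ is a well-defined invariant of each labeled C-decorated tree, and the computation $m+1 = 2\norm{\bmmu_1}+2\norm{\bmmu_2}+c(\tau)$ with $c(\tau)=m+1-2g$, placed just before the lemma, gives $\Lambda \vdash g$ and fixes the lengths $\abs{\bmmu_k}$, so the fibres partition both $\Cc_{R, W, L}(g, m, n)$ and $\Cc_{R, W}(\ppl)$.
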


\bigskip
%%%%% subsection %%%%%
\subsection{Bijections for C-Decorated Trees} \label{sec:CFF_bij}

Chapuy et al.\cite{CFF13} described a combinatorial bijection that maps one-face maps of positive genus to C-decorated trees.
Informally, this bijection splits some vertices of a one-face map into several vertices, without altering other graph objects, until the genus of the map decreases to $0$, yielding a plane tree.
The splitting information is recorded by the C-permutation on the vertices, which can be used to recover the original one-face map.

\begin{definition}
    The disjoint union of $k \in \Zpos$ copies of a set $A$ is denoted by $kA$. Explicitly, we may set $kA := \defsetsmall{(i, x)}{1 \leq i \leq k, x \in A}$.
\end{definition}

\begin{theorem}[Chapuy et al.\cite{CFF13}] \label{thm:CFF_original_bij}
    Given genus $g$ and number of edges $m$, there exists a bijection between C-decorated trees and rooted one-face maps
    \begin{equation}
        \CFF : \Cc_{R}(g, m) \cong 2^{m+1} \Uc_{R}(g, m) \ .
    \end{equation}
    For the corresponding rooted one-face map $\CFF(T, e_T, \tau^*) = (i, (M, e_M))$, when forgetting the embedding structure and considering only the bicolored abstract graph structure, they are denoted respectively by $T' = (U_{T, 1} \sqcup U_{T, 2}, E_T)$ and $M' = (U_{M, 1} \sqcup U_{M, 2}, E_M)$. 
    If one glues together the vertices $V(\alpha)$ that are acted by cycles $\alpha$ in $T'$, one obtains $M'$. Moreover, the root edges correspond to each other $\CFF(e_T) = e_M$.
    \qed
\end{theorem}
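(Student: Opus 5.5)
We will follow the strategy of Chapuy, F\'eray and Fusy \cite{CFF13}. We encode rooted one-face maps with $m$ edges permutationally, and build the bijection from a local ``gluing'' operation on vertices whose inverse is a ``slicing'' operation. First we label the $2m$ corners (equivalently the edge-sides) of a rooted one-face map by $1,\dots,2m$ in the order of the tour of the unique face, starting at the root. The map is then determined by its vertex rotation $\sigma$ together with the fixed long cycle $\gamma=(1\,2\,\cdots\,2m)$ describing the face. In this encoding a rooted plane tree $(T,e_T)\in\Uc_R(0,m)$ is such a pair with genus $0$. The colouring is carried along: black and white vertices are the cycles of $\sigma$ visited at odd and even corners respectively. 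A rooted one-face map is then simply a choice of which vertices of the underlying structure are identified.

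The first key step is the gluing lemma. Let $v_1,\dots,v_{2k+1}$ be vertices of a one-face map of the same colour, and let a cyclic order on them be prescribed. Merging them in that order means concatenating their corner rotations along the prescribed cyclic order. We will show that the result is again a one-face map, of genus increased by exactly $k$, provided the cyclic order is the one ``compatible'' with the tour order of the $v_i$. We will also show that the sign attached to the cycle records which of two natural compatible choices is taken. This is a direct permutation computation: one checks that the product $\sigma\gamma^{-1}$ still has a single cycle, and then applies Euler's formula. Restricting to monochromatic cycles keeps the map bipartite, and the abstract graph obtained is exactly $T'$ with the sets $V(\alpha)$ identified, as the statement asserts. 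Applying this to all cycles of $\tau$ gives a map $\Cc_R(g,m)\to\Uc_R(g,m)$ that keeps the root edge; the total genus is $\sum_\alpha \mu(\alpha)=g$ by the computation preceding Lemma \ref{lem:cycle_data_split}.

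The second step is the inverse and the multiplicity $2^{m+1}$, and we expect it to be the main obstacle. Here we use Chapuy's trisection lemma: in a one-face map of genus $g$ there are exactly $2g$ trisections, that is, corners where the tour order fails to increase around a vertex. Slicing a vertex at three trisections splits it into three vertices and lowers the genus by one; this is the inverse of gluing a $3$-cycle. Iterating the slicing and recording the history yields a tree together with a permutation of odd cycles. The difficulty is that the decomposition is not unique: different slicing orders produce different C-permutations. We would therefore first prove the weighted counting identity $2^{m+1}\abs{\Uc_R(g,m)}=\abs{\Cc_R(g,m)}$ by induction on $g$. To do so, we compare the ways to glue $2k+1$ vertices of a map of lower genus with the number of trisection-slicings of a map of higher genus; the free signs account for the factor $2^{\#\text{cycles}}$, which matches the $2^{m+1-2g}$ trivial contributions. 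We would then upgrade the identity to an explicit bijection, as in \cite{CFF13}. This is done by using the signs and the canonical tour order to fix the choice at each step, so that each map is hit exactly $2^{m+1}$ times, indexed by $i\in\{1,\dots,2^{m+1}\}$. Throughout, we check that the construction never moves the root edge, so that $\CFF(e_T)=e_M$.
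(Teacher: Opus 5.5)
The paper gives no proof of its own here. It quotes \cite{CFF13} and only notes that the bicolored version follows from the monochrome one because the glued graph is preserved; your remark on monochromatic cycles covers that part. As a reconstruction of \cite{CFF13}, however, your proposal has a genuine gap.

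\textbf{Step 1 does not define a map on $\Cc_R(g,m)$.} Suppose you merge vertices at fixed corners by concatenating their rotations along an $r$-cycle $\pi$ of those corners. The result has as many faces as $\gamma_C\pi$ has cycles, where $\gamma_C$ is the cyclic order in which the face tour meets the chosen corners. For $r=3$ exactly one of the two cyclic orders keeps one face. For $r=5$ only $8$ of the $24$ do (these are the eight rooted maps of Example~\ref{eg:ppqu_55}). So the cyclic order of an arbitrary cycle of $\tau$ cannot serve as the gluing order, and a sign cannot select among the admissible orders. You also leave unspecified the order in which several cycles are processed, although each gluing reroutes the face tour that the next ``compatible'' order refers to. The correspondence of \cite{CFF13} is built recursively and does not glue cycle by cycle along $\tau^*$; it only preserves the sets $V(\alpha)$, i.e.\ the glued graph.

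Your trisection step is also inaccurate. The corners at which the tour order drops around a vertex number $\abs{V}+2g$, not $2g$: at each vertex one must discard the descent into its minimal corner. Chapuy slices at \emph{one} trisection into $2p+1$ vertices, with a genus drop $p\ge 1$ that depends on the trisection. Terms with $p\ge 2$ do occur.

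\textbf{The decisive missing idea is on the C-permutation side.} Let $\epsilon_g(m)$ be the number of rooted (monochrome) one-face maps of genus $g$ with $m$ edges. Chapuy's trisection bijection gives
\[
2g\,\epsilon_g(m)=\sum_{p=1}^{g}\binom{m+1-2(g-p)}{2p+1}\epsilon_{g-p}(m) \qquad(\text{e.g. } 4\cdot 21=70+14 \text{ for } m=4).
\]
Your induction needs the number $c_g(N)$ of C-permutations of genus $g$ on $N=m+1$ points to obey the same recursion,
\[
2g\,c_g(N)=\sum_{p}\binom{N-2(g-p)}{2p+1}c_{g-p}(N).
\]
Moreover, this must come from a bijection that merges the $2p+1$ marked cycles into one cycle, so that the glued graph changes exactly as under Chapuy's gluing. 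This lemma is the real content of \cite{CFF13}, and it is exactly where odd lengths and signs are indispensable. For $N=3$, $g=1$ one needs $2\cdot(2\cdot 2)=\binom33\cdot 2^3$, which fails without signs.

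Your claim that the signs ``account for $2^{\#\text{cycles}}$'' does not supply this lemma. Since $\#\text{cycles}=m+1-2g$, the signs alone do not even produce $2^{m+1}$. Nor is ``fixing the choice at each step with the signs and the tour order'' a construction.

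What closes the argument is a class-by-class count. Once both recursions respect the glued rooted graph, induction gives a bijection between $2g\cdot 2^{m+1}$ copies of the maps and $2g$ copies of the C-decorated trees, class by class. Hence each class of fixed glued graph has the same size on both sides. Any class-wise bijection then has the gluing property, and $\CFF(e_T)=e_M$ holds because slicing and gluing never touch edges.
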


%\bigskip

Specifically, there is a color-preserving bijection between cycles and vertices $\CFF : \cyc_k(\tau) \cong U_{M, k}, k=1,2$, and a bijection between edge sets $\CFF : E_T \cong E_M$. The incidence relations between vertices and edges are also preserved: if we denote the set of edges incident to a vertex $v$ by $E(v)$, then for the corresponding cycle and vertex $\CFF(\alpha) = v_M$, we have
$\bigsqcup_{v_T \in V(\alpha)} E(v_T) = E(v_M)$.

\begin{remark}
    The original bijection of Chapuy et al.\cite{CFF13} is a correspondence between \textbf{monochrome} C-decorated trees and rooted one-face maps. 
    Theorem \ref{thm:CFF_original_bij} states a correspondence between \textbf{bicolored} graphs, where vertices on both sides are assigned colors. The latter can be easily deduced from the former.
\end{remark}

\bigskip

Since $\CFF$ gives a bijection between edge sets $E_T \cong E_M$, we can simultaneously assign weight functions $\wt_{T, E} : E_T \to \Zpos$ and $\wt_{M, E} : E_M \to \Zpos$ as in Definition \ref{def:weighted_unic_map} on the two edge sets, requiring that corresponding edges have the same weight $\wt_{M, E} \circ \CFF= \wt_{T, E}$, and that the total edge weight is $\sum_{e_T' \in E_T} \wt_{T, E}(e_T') = \sum_{e_M' \in E_M} \wt_{M, E}(e_M') = n$. This yields a weighted version of the $\CFF$ bijection:
\begin{equation}
    \CFF_W : \Cc_{R, W}(g, m, n) \cong 2^{m+1} \Uc_{R, W}(g, m, n)  \ .
\end{equation}

Furthermore, $\CFF$ gives a bijection between cycles and vertices $\cyc_k(\tau) \cong U_{M, k}, k=1,2$. Thus, we define labeling functions $\Lab_{C, k} : \cyc_k(\tau) \to [c_k(\tau)], k=1,2$ and $\Lab_{M, k} : U_{M, k} \to [c_k(\tau)], k=1,2$, requiring that corresponding cycles and vertices have the same label $\Lab_{M, k} \circ \CFF= \Lab_{T, k}$. This yields a totally labeled and weighted version of the $\CFF$ bijection:
\begin{equation} \label{eq:CFF_WL}
    \CFF_{W, L} : \Cc_{R, W, L}(g, m, n) \cong 2^{m+1} \Uc_{R, W, L}(g, m, n) \ .
\end{equation}

Since the incidence relations between vertices and edges are preserved, for $\CFF(\alpha) = v_M$ we have $\bigsqcup_{v_T \in V(\alpha)} E(v_T) = E(v_M)$. Hence the weight functions $\wt_C, \wt_M$ on the C-decorated tree and the one-face map satisfy
\begin{equation*}
    \wt_C(\alpha) 
    = \sum_{v_T \in V(\alpha)} \wt_T(v_T)
    = \sum_{v_T \in V(\alpha)} \sum_{e \in E(v)} \wt_{T, E}(e)
    = \sum_{e \in E(v_M)} \wt_{M, E}(e)
    = \wt_M(v_M) \ .
\end{equation*}
Therefore, the passports of the C-decorated tree and the weighted one-face map coincide. Finally, we obtain the $\CFF$ bijection restricted by the passport.

\begin{theorem} \label{thm:CFF_ppl}
    Given a totally labeled quasi-one-face passport $\ppl$, there is a bijection between totally labeled weighted C-decorated trees and totally labeled rooted weighted one-face maps
    \begin{equation} \label{eq:CFF_ppl}
        \CFF_{\ppl} : \Cc_{R, W}(\ppl) \cong 2^{m+1} \Uc_{R, W}(\ppl) \ . 
    \end{equation}
    \qed
\end{theorem}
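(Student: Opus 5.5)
The plan is to obtain $\CFF_{\ppl}$ by restricting the totally labeled weighted bijection $\CFF_{W,L}$ of \eqref{eq:CFF_WL} to the fibres over a fixed passport. First, observe that both sides of \eqref{eq:CFF_WL} decompose by passport. Every element of $\Cc_{R,W,L}(g,m,n)$ has a totally labeled passport, namely the pair of sequences $(\wt_C(\alpha^{(k)}_i))_i$. Likewise every element of $\Uc_{R,W,L}(g,m,n)$ has the passport given by the sequences of vertex weights $(\wt_M(\Lab_{M,k}^{-1}(i)))_i$. Hence
\[
\Cc_{R,W,L}(g,m,n)=\bigsqcup_{\ppl}\Cc_{R,W}(\ppl),\qquad 2^{m+1}\Uc_{R,W,L}(g,m,n)=\bigsqcup_{\ppl}2^{m+1}\Uc_{R,W}(\ppl),
\]
where $\ppl$ ranges over totally labeled quasi-one-face passports with the given $g,m,n$. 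The second decomposition holds because the factor $2^{m+1}$ is a disjoint union of copies, and the passport does not depend on the copy index.

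Second, I will show that $\CFF_{W,L}$ preserves passports. Take $(T,e_T,\tau^*;\Lab_{C,k},\wt_{T,E})$ with image $(i,(M,e_M);\Lab_{M,k},\wt_{M,E})$. By construction, $\Lab_{M,k}\circ\CFF=\Lab_{C,k}$ on cycles, and $\wt_{M,E}\circ\CFF=\wt_{T,E}$ on edges. The incidence identity $\bigsqcup_{v_T\in V(\alpha)}E(v_T)=E(\CFF(\alpha))$ from Theorem \ref{thm:CFF_original_bij} then gives the displayed computation $\wt_C(\alpha)=\wt_M(\CFF(\alpha))$. It follows that the cycle labeled $i$ of color $k$ and the vertex labeled $i$ of color $k$ carry the same weight, so the two passports coincide. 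The number of vertices of each color equals the number of cycles of that color, since $\CFF$ restricts to a bijection $\cyc_k(\tau)\cong U_{M,k}$. Moreover, $g,m,n$ are preserved by definition.

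Third, a bijection that preserves a function into an index set restricts to a bijection on each fibre. Applying this to the passport function yields $\CFF_{\ppl}:\Cc_{R,W}(\ppl)\cong 2^{m+1}\Uc_{R,W}(\ppl)$, because the inverse map preserves passports as well. The only point needing care is that the weighted and labeled enhancements are genuinely well defined on top of $\CFF$. Concretely, this means that transporting $\wt_E$ and $\Lab_k$ along the underlying bijection respects isomorphism of rooted objects. This is automatic here: rooted objects have trivial automorphism groups, so their edges, vertices and cycles are canonically identified. I expect this to be the main (mild) obstacle, and I would address it by first fixing rooted representatives and then transporting the decorations.
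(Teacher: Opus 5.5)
Your proposal is correct and follows the paper's argument. The paper likewise transports edge weights along $E_T \cong E_M$ and labels along $\cyc_k(\tau)\cong U_{M,k}$ to obtain $\CFF_{W,L}$, uses the incidence identity to get $\wt_C(\alpha)=\wt_M(\CFF(\alpha))$ so that the passports coincide, and then restricts to the fibre over $\ppl$; your explicit fibre-restriction step and your remark that the transport is well defined because rooted objects have trivial automorphisms spell out what the paper leaves implicit.
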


%%%%% subsection %%%%%
\subsection{C-Decorated Trees and Totally Labeled Trees} \label{sec:C-decorated_tree_and_totally_labeled_tree}

Now take a totally labeled weighted C-decorated tree $(T, e_T, \tau^*; \Lab_{C, k}, \wt_E) \in \Cc_{R, W, L}(g, m, n; \Lambda)$.
Here the labeling $\Lab_{C, k} : \cyc_k(\tau) \to [c_k(\tau)]$ assigns labels to cycles.

There is an unlabeled rooted weighted tree $(T = (S^2; E, U_k), \wt_E, e_T) \in \Uc_{R,W}(0, m, n)$ under the C-decorated tree. 
We wish to further assign a total labeling $\Lab_{T, k} : U_k \to S_{T, k}, k=1,2$ to its vertices. 

Since the index set in a total labeling can be chosen arbitrarily as long as its size matches, for convenience we use the index set completely determined by the cycle data $\Lambda$ (where $c_k(\tau) = u_k(\pp)$ are the numbers of cycles) as in \eqref{eq:index_set_T}
\begin{equation*}
    S_{T, k} = \defsetsmall{(i,j)}{1 \leq i \leq c_k(\tau), 1 \leq j \leq 2\mu_{k, i}+1} \ ,\ k=1,2 \ .
\end{equation*}
Denote the corresponding vertices by $v^{(k)}_{i,j} := \Lab_{T, k}^{-1}(i, j)$.

We naturally want the first component $i$ of a vertex $v^{(k)}_{i,j}$ to indicate that $v^{(k)}_{i,j} \in V(\alpha^{(k)}_i)$, since the latter has exactly $\big| V(\alpha^{(k)}_i) \big| = 2\mu_{k, i}+1$ elements. 
It remains to determine the second component for the vertices in $V(\alpha^{(k)}_i)$. 
If we have chosen the vertex with second component $1$ to be $v^{(k)}_{i,1} \in V(\alpha^{(k)}_i)$, then, following the cyclic order, we can successively assign labels to the vertices in $V(\alpha^{(k)}_i)$
\begin{equation*}
    v^{(k)}_{i,j} := \big(\alpha^{(k)}_i\big)^{j-1} \cdot v^{(k)}_{i,1} \ ,\ k=1,2 \ ,\ 1 \leq i \leq c_k(\tau) \ ,\ 1 \leq j \leq 2\mu_{k, i} +1 \ .
\end{equation*}
For each cycle $\alpha^{(k)}_i$, this choice of a distinguished vertex admits $\big| V(\alpha^{(k)}_i) \big| = 2\mu_{1,i} +1$ possibilities. 
If we make such a choice for all black cycles and all white cycles, the total number of choices is exactly $R(\Lambda)$ as in \eqref{eq:def_R_Lambda}
\begin{equation*}
    R(\Lambda) := \prod_{i=1}^{c_1(\tau)} (2 \mu_{1,i} + 1) \cdot \prod_{i=1}^{c_2(\tau)} (2 \mu_{2,i} + 1) \ .
\end{equation*}
Through these choices, all vertices receive labels $\Lab_{T, k}, k=1,2$. This yields the following proposition.

\begin{proposition} \label{prop:gmn_tree_bij}
    Given $g, m, n$ and admissible cycle data $\Lambda$, there is a bijection between totally labeled weighted C-decorated trees and totally labeled weighted trees
    \begin{equation}
        R(\Lambda) \ \Cc_{R, W, L}(g, m, n; \Lambda) \cong 2^{m+1-2g} \ \Uc_{R, W, L}(0, m, n) \ .
    \end{equation}
\end{proposition}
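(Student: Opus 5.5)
Set up an explicit correspondence between the two sides. Let $\Uc_{R,W,L}(0,m,n)$ use the index sets $S_{T,k}$ of \eqref{eq:index_set_T}. The left side $R(\Lambda)\,\Cc_{R,W,L}(g,m,n;\Lambda)$ consists of pairs (C-decorated tree, choice of a distinguished vertex $v^{(k)}_{i,1}$ in each cycle $\alpha^{(k)}_i$). The right side $2^{m+1-2g}\,\Uc_{R,W,L}(0,m,n)$ consists of pairs (totally labeled rooted weighted plane tree, sign vector in $\{+,-\}^{c(\tau)}$, where $c(\tau)=m+1-2g$).

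\textbf{Forward map.} Given a decorated tree $(T,e_T,\tau^*;\Lab_{C,k},\wt_E)$ and a choice of distinguished vertices, label the vertices by $v^{(k)}_{i,j}=(\alpha^{(k)}_i)^{j-1} v^{(k)}_{i,1}$ as described above. This produces a totally labeled rooted weighted tree whose underlying tree is unchanged. The signs are recorded as the vector $(\sg(\alpha^{(k)}_i))_{k,i}$, indexed through the cycle labels $\Lab_{C,k}$. Each cycle is monochromatic, so black labels go to black vertices and white labels to white vertices. Since the tree is rooted, it has no nontrivial automorphisms, so the labeling is well defined.

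\textbf{Inverse map.} Given a rooted weighted tree with labeling $\Lab_{T,k}:U_k\to S_{T,k}$ and a sign vector, recover the data as follows.
\begin{itemize}
\item Define the cycle $\alpha^{(k)}_i := (v^{(k)}_{i,1}, v^{(k)}_{i,2},\dots,v^{(k)}_{i,2\mu_{k,i}+1})$. Its length is odd and its vertices all have color $k$.
\item Set $\Lab_{C,k}(\alpha^{(k)}_i)=i$ and $\sg(\alpha^{(k)}_i)$ equal to the $(k,i)$ entry of the sign vector.
\item Take $v^{(k)}_{i,1}$ as the distinguished vertex.
\end{itemize}
Then check that $\tau=\prod\alpha^{(k)}_i$ is a C-permutation of genus $g$ on $m+1$ vertices. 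The number of cycles is $\abs{\bmmu_1}+\abs{\bmmu_2}=m+1-2g$ by the admissibility of $\Lambda$ (Lemma \ref{lem:cycle_data_split}). The cycle data of the result is $\Lambda$, and the weights and root are untouched.

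\textbf{Mutual inverses.} Starting from a decorated tree with a distinguished vertex, applying the labeling and then reading off the cycle $(v_{i,1},\dots)$ returns $\alpha^{(k)}_i$, because $v_{i,j+1}=\alpha v_{i,j}$. In the other direction, relabeling via powers of the reconstructed cycle starting from $v_{i,1}$ returns the original labels. Signs and cycle labels are carried along identically in both directions.

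The main obstacle is bookkeeping rather than mathematics. The rooted tree has a trivial automorphism group, so distinct data give distinct objects on both sides and no overcounting arises. The points needing care are that the index sets $S_{T,k}$ depend only on $\Lambda$, that the color condition holds, and that the factor $2^{m+1-2g}$ corresponds exactly to the signs, which the tree side lacks.
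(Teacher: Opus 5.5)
Your proposal is correct and follows essentially the same route as the paper. The factor $R(\Lambda)$ comes from choosing a starting vertex $v^{(k)}_{i,1}$ in each cycle and labeling along powers of the cycle, the factor $2^{m+1-2g}$ records the signs that the tree forgets, and the inverse reads the cycles $(v^{(k)}_{i,1},\dots,v^{(k)}_{i,2\mu_{k,i}+1})$ off the labels. Your explicit checks (odd lengths, monochromatic cycles, $c(\tau)=m+1-2g$, mutual inverseness) fill in what the paper leaves implicit; like the paper, you read $\Uc_{R,W,L}(0,m,n)$ as trees labeled by the index sets $S_{T,k}$, which fixes the numbers of black and white vertices.
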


\begin{proof}
From the preceding discussion, given a totally labeled weighted C-decorated tree $(T, \wt_E, e_T, \tau^*; \Lab_{C, k}) \in \Cc_{R, W, L}(g, m, n; \Lambda)$ and making a choice among $R(\Lambda)$ possibilities, we obtain a totally labeled rooted weighted tree $(T, \wt_E, e_T; \Lab_{T, k}) \in \Uc_{R, W, L}(0, m, n)$. 
Moreover, the total labeling on trees does not encode the signs $\sg : \cyc(\tau) \to \{+, -\}$ of the cycles in the C-permutation $\tau^*$; there are $2^{c(\tau)} = 2^{m+1-2g}$ choices for these signs. 
This gives a map
\begin{equation*}
    R(\Lambda) \ \Cc_{R, W, L}(g, m, n; \Lambda) \to 2^{m+1-2g} \ \Uc_{R, W, L}(0, m, n) \ .
\end{equation*}

Conversely, given a totally labeled tree $(T, \wt_E, e_T; \Lab_{T, k})$ and a sign function $\sg : \cyc(\tau) \to \{+, -\}$, denote the vertex with label $(i, j)$ by $v^{(k)}_{i, j} := \Lab_{T, k}^{-1}(i, j)$. 
Then the cycles can be defined as $\alpha^{(k)}_i := (v^{(k)}_{i, 1}, v^{(k)}_{i, 2}, \ldots, v^{(k)}_{i, 2\mu_{1, i} + 1})$, with labeling $\Lab_{C, k}(\alpha^{(k)}_i) = i$. 
This defines the labeling functions $\Lab_{C, k}, k=1,2$. 
Multiplying the cycles yields the permutation $\tau$, and hence the C-permutation $\tau^* = (\tau, \sg)$ and the C-decorated tree.
\end{proof}

\bigskip

Now given a totally labeled quasi-one-face passport $\ppl = (g, m, n; \bmwt_{C, 1}, \bmwt_{C, 2})$. 
For a C-decorated tree $(T, \wt_E, e_T, \tau^*; \Lab_{C, k}) \in \Cc_{R, W}(\ppl; \Lambda)$ with this passport, its underlying unlabeled rooted weighted tree $(T, \wt_E , e_T)$ has a total labeling passport $\ppzl = (0, m, n; \Pi_{T, 1} = (S_{T,1}, \one, \wt_{T, 1}), \Pi_{T, 2} = (S_{T, 2}, \one, \wt_{T, 2}))$. 
The passport $\ppzl$ uses the index sets $S_{T, 1}, S_{T,2}$ determined by the cycle data $\Lambda$, and its weight distribution can be written as two-dimensional arrays $\bmwt_{T, 1} = (\wt_{T, 1}(i, j))_{i, j}, \bmwt_{T, 2} = (\wt_{T, 2}(i, j))_{i, j}$ (where the rows may have different lengths).

We finally denote $\ppzl = (0, m, n; \bmwt_{T, 1}, \bmwt_{T, 2})$. 
Now determine the conditions that the passport $\ppzl$ must satisfy.

By definition, the weight of a cycle is the sum of the weights of the vertices it acts on, which gives
\begin{equation*}
    \wt_{C, k}(i) = \wt_C \big(\alpha^{(k)}_i \big) = \sum_{v \in V \big( \alpha^{(k)}_i \big)} \wt_T(v) = \sum_{j=1}^{2\mu_{k, i}+1} \wt_T \big(v^{(k)}_{i, j} \big) = \sum_{j=1}^{2\mu_{k, i}+1} \wt_{T, k}(i, j) \ .
\end{equation*}
This condition is exactly $\ppzl \xrightarrow{\Lambda} \ppl$ (see \eqref{eq:cond_passport_row_sum}).

\begin{proposition} \label{prop:passport_tree_bij}
    Let $\ppl = (g, m, n; \bmwt_{C, 1}, \bmwt_{C, 2})$ be a totally labeled one-face passport and $\Lambda$ an admissible cycle data, 
    there is a bijection between totally labeled weighted C-decorated trees and totally labeled weighted trees
    \begin{equation}
        R(\Lambda) \ \Cc_{R, W}(\ppl; \Lambda) \cong 
        2^{m+1-2g} 
        \bigsqcup_{\ppzl \xrightarrow{\Lambda} \ppl} 
        \Uc_{R, W}(\ppzl) \ .
    \end{equation}
    Here the disjoint union runs over all \textbf{distinct} passports $\ppzl$ satisfying $\ppzl \xrightarrow{\Lambda} \ppl$. 
    Recall that passports are considered distinct if any component $\wt_{T, k}(i, j)$ differs. 
\end{proposition}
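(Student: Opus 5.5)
The plan is to restrict the bijection of Proposition \ref{prop:gmn_tree_bij} to the passport $\ppl$ and check that it matches the two sides exactly. Fix $\ppl = (g, m, n; \bmwt_{C, 1}, \bmwt_{C, 2})$ and admissible cycle data $\Lambda$. We have
\[
\Cc_{R, W}(\ppl; \Lambda) \subset \Cc_{R, W, L}(g, m, n; \Lambda).
\]
Proposition \ref{prop:gmn_tree_bij} gives a bijection between $R(\Lambda)\,\Cc_{R, W, L}(g, m, n; \Lambda)$ and $2^{m+1-2g}\,\Uc_{R, W, L}(0, m, n)$. Here the target trees are totally labeled with the index sets $S_{T,k}$ of \eqref{eq:index_set_T}. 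The forward direction takes a choice of distinguished vertex in each cycle and forgets the signs. The inverse reads off the cycles $\alpha^{(k)}_i = (v^{(k)}_{i,1}, \ldots, v^{(k)}_{i,2\mu_{k,i}+1})$ from the labels and reattaches the signs. I will show that this bijection carries the subset $R(\Lambda)\,\Cc_{R, W}(\ppl; \Lambda)$ onto the subset $2^{m+1-2g}\bigsqcup_{\ppzl \xrightarrow{\Lambda} \ppl}\Uc_{R, W}(\ppzl)$.

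For the forward inclusion, take a C-decorated tree in $\Cc_{R, W}(\ppl;\Lambda)$ together with a choice of distinguished vertices. The resulting totally labeled tree has some passport $\ppzl$ with entries $\wt_{T,k}(i,j) = \wt_T(v^{(k)}_{i,j})$. The computation just before the statement gives
\[
\wt_{C,k}(i) = \sum_j \wt_{T,k}(i,j),
\]
so $\ppzl \xrightarrow{\Lambda} \ppl$, and the image lies in the right-hand side. For the reverse inclusion, start from a tree in $\Uc_{R,W}(\ppzl)$ with $\ppzl \xrightarrow{\Lambda}\ppl$ and a sign function. The reconstructed C-decorated tree has cycle weights
\[
\wt_C(\alpha^{(k)}_i) = \sum_j \wt_T(v^{(k)}_{i,j}) = \sum_j \wt_{T,k}(i,j) = \wt_{C,k}(i),
\]
so it lies in $\Cc_{R,W}(\ppl;\Lambda)$. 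It also has cycle data $\Lambda$, because row $i$ has length $2\mu_{k,i}+1$.

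Two points need care. First, the union on the right is genuinely disjoint. A totally labeled tree determines its passport uniquely, namely the array of vertex weights indexed by $S_{T,k}$, so distinct arrays $\bmwt_{T,k}$ give disjoint sets $\Uc_{R,W}(\ppzl)$. This is exactly why the sum runs over distinct passports. Second, in $\Cc_{R,W,L}$ each cycle inherits a color from its vertices. So the black cycle labeled $i$ must correspond to black tree vertices $(i,j)$, and the index sets $S_{T,1}$, $S_{T,2}$ must respect colors. This holds because Proposition \ref{prop:gmn_tree_bij} labels black and white vertices separately.

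The main obstacle is bookkeeping rather than new ideas. We must check that the $R(\Lambda)$ choices and the $2^{m+1-2g}$ signs act freely and compatibly with the restriction, so that restricting a bijection between disjoint-union copies still gives a bijection between the copies of the subsets. This holds because neither the choice of distinguished vertices nor the signs change the underlying weighted tree or the cycle weights. Membership in the passport-restricted subsets is therefore constant on each fiber.
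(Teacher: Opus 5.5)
Your proposal is correct and follows the same route as the paper. Both restrict the bijection of Proposition~\ref{prop:gmn_tree_bij}, identify the image using the row-sum identity $\wt_{C,k}(i)=\sum_j \wt_{T,k}(i,j)$, and note that the union is disjoint because a totally labeled tree determines its passport $\ppzl$. You are slightly more complete than the paper, which argues only the forward direction and disjointness and leaves implicit your reverse check that a tree with $\ppzl \xrightarrow{\Lambda} \ppl$ reconstructs to a C-decorated tree in $\Cc_{R, W}(\ppl; \Lambda)$.
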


\begin{proof}
The bijection in this proposition is the restriction of the bijection from Proposition \ref{prop:gmn_tree_bij}. 
%Moreover, by the preceding argument, a C-decorated tree with passport $\ppl$ corresponds to a totally labeled tree with passport $\ppzl$ satisfying $\ppzl \xrightarrow{\Lambda} \ppl$.

Note that the weights $\wt_{T, k}(i, j)$ in the passport $\ppzl$ represent the vertex weights $\wt_T(v^{(k)}_{i, j})$. 
If two distinct passports satisfy $\ppzl \xrightarrow{\Lambda} \ppl$ and differ in some component $\wt_{T, k}(i, j)$, then the corresponding vertex weights differ, hence the corresponding totally labeled trees are certainly different. 
Therefore the right-hand side of the bijection should take the disjoint union over all distinct passports $\ppzl$ satisfying $\ppzl \xrightarrow{\Lambda} \ppl$.
\end{proof}

%%%%% subsection %%%%%
\subsection{Rooted Quasi-One-Face Map Enumeration Formula} \label{sec:CFF_passport_formula}

\begin{theorem} \label{thm:CFF_full_passport_formula}
    Let $\ppl = (g, m,  n; \bmwt_{C, 1},\bmwt_{C, 2})$ be a totally labeled quasi-one-face passport. 
    The number of rooted weighted one-face maps is
    \begin{equation}
        \abs{\Uc_{R, W}(\ppl)} = 2^{-2g} \sum_{\Lambda \vdash g} R(\Lambda)^{-1} \sum_{\ppzl \xrightarrow{\Lambda} \ppl} \abs{\Uc_{R, W}(\ppzl)} \ .
    \end{equation}
\end{theorem}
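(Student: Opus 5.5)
The plan is to chain the three bijections already established and then cancel the powers of two.

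First, Theorem \ref{thm:CFF_ppl} gives $\abs{\Cc_{R, W}(\ppl)} = 2^{m+1}\abs{\Uc_{R, W}(\ppl)}$. Second, Lemma \ref{lem:cycle_data_split} decomposes the left side as $\sum_{\Lambda \vdash g} \abs{\Cc_{R, W}(\ppl; \Lambda)}$. This sum runs over cycle data with $\abs{\bmmu_k} = u_k(\ppl)$. These are exactly the $\Lambda \vdash g$ appearing in the statement, since by definition there $\bmmu_k$ has length $u_k$. Third, Proposition \ref{prop:passport_tree_bij} gives, for each admissible $\Lambda$,
\begin{equation*}
    \abs{\Cc_{R, W}(\ppl; \Lambda)} = 2^{m+1-2g} R(\Lambda)^{-1} \sum_{\ppzl \xrightarrow{\Lambda} \ppl} \abs{\Uc_{R, W}(\ppzl)} \ .
\end{equation*}
Here the union on the right is disjoint because distinct passports give distinct labeled trees, as argued in that proof.

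Combining the three steps, we get
\begin{equation*}
    2^{m+1}\abs{\Uc_{R, W}(\ppl)} = 2^{m+1-2g}\sum_{\Lambda \vdash g} R(\Lambda)^{-1}\sum_{\ppzl \xrightarrow{\Lambda} \ppl} \abs{\Uc_{R, W}(\ppzl)} \ .
\end{equation*}
Dividing both sides by $2^{m+1}$ yields the stated formula.

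The only point needing care, and the main obstacle, is a bookkeeping check. One must verify that the sign factor $2^{c(\tau)}$ in Proposition \ref{prop:passport_tree_bij} has exponent $c(\tau) = u_1+u_2 = m+1-2g$ uniformly for every $\Lambda$ in the sum. This holds because $m = v(\ppl)+2g-1$ is forced by Euler's formula for the quasi-one-face passport. One must also check that the degenerate case $m=1$ and any $\Lambda$ admitting no compatible $\ppzl$ contribute consistently. The latter simply gives an empty sum, i.e. zero, on both sides.
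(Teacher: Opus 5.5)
Your proposal is correct and is essentially the paper's own proof: it chains Theorem~\ref{thm:CFF_ppl}, Lemma~\ref{lem:cycle_data_split} and Proposition~\ref{prop:passport_tree_bij}, then cancels $2^{m+1}$ against $2^{m+1-2g}$. Your extra check, that the sign exponent $c(\tau)=u_1+u_2=m+1-2g$ is the same for every $\Lambda$, is exactly what the paper uses implicitly when it pulls that factor out of the sum.
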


\begin{proof}
By Theorem \ref{thm:CFF_ppl}, Lemma \ref{lem:cycle_data_split}, Proposition \ref{prop:passport_tree_bij}
\begin{equation*}\begin{split}
    \abs{\Uc_{R, W}(\ppl)} 
    &= 2^{-m-1} \abs{\Cc_{R, W}(\ppl)} 
    = 2^{-m-1} \sum_{\Lambda \vdash g} \abs{\Cc_{R, W}(\ppl; \Lambda)} \\
    &= 2^{-m-1} \sum_{\Lambda \vdash g} R(\Lambda)^{-1} \cdot 2^{m+1-2g} \sum_{\ppzl \xrightarrow{\Lambda} \ppl} \abs{\Uc_{R, W}(\ppzl)} \ .
    \qedhere
\end{split}\end{equation*}
\end{proof}

\begin{proof}[Proof of Theorem \ref{thm:CFF_general_passport_formula}]
Given a quasi-one-face passport $\ppqu = (g, m, n; \Pi_1, \Pi_2)$.
By Proposition \ref{prop:Fill_rooted_eq_fact_rooted}, Formula \eqref{eq:QUpassport_MR_and_URW}, we obtain
\begin{equation*}
    \abs{\Mc_R(\ppqu)} = \frac1{(\ppqu)!} \abs{\Mc_R(\Fill(\ppqu))} = \frac{n}{m\ (\ppqu)!} \abs{\Uc_{R, W}(\Fill(\ppqu))} \ .
\end{equation*}
Using Theorem \ref{thm:CFF_full_passport_formula}, we get
\begin{equation*}
    \abs{\Uc_{R, W}(\Fill(\ppqu))} = 2^{-2g} \sum_{\Lambda \vdash g} R(\Lambda)^{-1} \sum_{\ppzl \xrightarrow{\Lambda} \Fill(\ppqu)} \abs{\Uc_{R, W}(\ppzl)} \ .
\end{equation*}
Finally we need to compute $\abs{\Uc_{R, W}(\ppzl)} = m \abs{\Uc_{W}(\ppzl)} = m \abs{\Mc(\ppzl)}$ (by Proposition \ref{prop:weighted_tree_no_isom}),
and the latter $\abs{\Mc(\ppzl)}$ can be computed by \ref{thm:YYK_formula}.
Thus we obtain
\begin{equation*}
    \abs{\Mc_R(\ppqu)} = \frac{n}{\ 2^{2g}\ (\ppqu)!}  \sum_{\Lambda \vdash g} R(\Lambda)^{-1} \sum_{\ppzl \xrightarrow{\Lambda} \Fill(\ppqu)} \abs{\Mc(\ppzl)} \ . \qedhere
\end{equation*}
\end{proof}

\bigskip

For applications of the main theorem \ref{thm:CFF_general_passport_formula}, see Appendix \ref{ch:example}.

As another application of this theorem, we prove the Hurwitz existence problem for positive genus $g > 0$ and $\pi_3 = (m\ 1^{n-m})$. 

\begin{proof}[Proof of Corollary \ref{cor:exist_quasi-unicellular_map}]
By Theorem \ref{thm:CFF_general_passport_formula},
we only need to show that there exist $\Lambda$ and $\ppzl \xrightarrow{\Lambda} \ppl$ such that $\Mc_{R}(\ppzl) \neq \varnothing$.

First, construct the cycle data $\Lambda$.
Define two sequences of nonnegative integers $\bm{\xi}_k = (\xi_{k, i})_{i=1}^{u_k(\pp)}$ satisfying
\begin{equation*}
    \xi_{k, i} := \lfloor  (\wt_{C, k}(i) - 1) / 2  \rfloor \ ,
\end{equation*}
then we have $\xi_{k, i} \geq (\wt_{C, k}(i) - 1) / 2 - 1/2 = \wt_{C, k}(i) / 2 - 1$. Since $m = 2g - 1 + v(\ppl) \leq n$, we obtain
\begin{equation*}
    \norm{\bm{\xi}_1} + \norm{\bm{\xi}_2} 
    \geq \frac{\norm{\bmwt_1}}{2} - u_1(\ppl) + \frac{\norm{\bmwt_2}}{2} - u_2(\ppl)
    = n - v(\ppl)
    \geq 2g - 1 
    \geq g \ .
\end{equation*}
Thus we can choose nonnegative integer sequences $\bmmu_k, k=1,2$ of appropriate lengths such that $0 \leq \mu_{k, i} \leq \xi_{k, i}$ and $\norm{\bmmu_1} + \norm{\bmmu_2} = g$.
Denote the cycle data by $\Lambda = (\bmmu_1, \bmmu_2)$.

Next, construct the passport $\ppzl = (0, m, n; \bmwt_{T, 1}, \bmwt_{T, 2}) \xrightarrow{\Lambda} \ppl$.
Since $2\mu_{k, i} + 1 \leq 2 \xi_{k, i} +1 \leq \wt_{C, k}(i)$, the number $\wt_{C, k}(i)$ can always be partitioned into $2\mu_{k, i} + 1$ positive integers $\wt_{T, k}(i, j), 1 \leq j \leq 2\mu_{k, i} + 1$.
Since $g > 0$, there exists some $\mu_{k, i} > 0$, hence some $\wt_{C, k}(i)$ is partitioned into at least $3$ parts, so we may additionally require that there exists a part $\wt_{T, k}(i, j) = 1$.
Thus all $\wt_{T, k}(i, j)$ are defined, the passport $\ppzl$ is given.

Finally, verify that $\Mc_R(\ppzl) \neq \varnothing$.
By Theorem \ref{thm:exist_tree}, it suffices to show that $(v(\ppzl) - 1) \cdot \gcd(\ppzl) \leq n$.
Since there exists a part $\wt_{T, k}(i, j) = 1$, we have $\gcd(\ppzl) = 1$.
And $v(\ppzl) - 1 = m \leq n$.
\end{proof}

\bigskip
%%%%% section %%%%%
\section{Algebraic Maps and Orbit Enumeration} \label{ch:algebraic_map}

The goal of this section is to prove Lemma \ref{lem:unrooted_from_fix_enum}, which shows that the enumeration of unrooted maps can be derived from the enumeration of symmetric maps. 
This will ultimately lead to the proof of Theorem \ref{thm:rooted_to_unrooted} on the enumeration of unrooted maps. 

To obtain this lemma, we introduce the concept of algebraic maps in Section \ref{sec:algebraic_map}. Algebraic maps represent maps through a simpler combinatorial structure: the conjugacy classes of two elements in permutation group. 
We also introduce the regular cover of an algebraic map, which is analogous to taking the ``quotient'' of a map.

In Section \ref{sec:rooted_and_numbered_map}, by rewriting the definition of isomorphisms for ordinary algebraic maps, we obtain rooted algebraic maps and totally numbered algebraic maps.
A totally numbered algebraic map is completely determined by the elements of two permutation groups, with no isomorphism relations involved, making it the simplest type of algebraic map.
Rooted maps and ordinary algebraic maps can each be viewed as sets of orbits of totally numbered maps under the conjugacy action of different groups (Lemma \ref{lem:group_action_on_numbered_map}).

Finally, in Section \ref{sec:rooted_and_numbered_map}, Burnside's Lemma shows that the enumeration of unrooted algebraic maps, i.e., the enumeration of orbits, can be obtained from the enumeration of symmetric maps. 
This proves Lemma \ref{lem:unrooted_from_fix_enum}.
Subsequently, we will take the ``quotient'' of symmetric maps, i.e., consider the quotient maps of their regular covers, to obtain their enumeration.

%%%%% subsection %%%%%
\subsection{Algebraic Maps} \label{sec:algebraic_map}

We first define algebraic maps. As we will see later, there is a bijective correspondence between algebraic maps and maps embedded on surfaces, so we may treat them as identical objects without distinction.

\begin{definition} \label{def:comb_map}
    An \textbf{algebraic map} $M = (E, x, y)$ consists of an edge set $E$ and two permutations $x, y \in \Sf_E$ on the edge set, such that the group generated by them $G = \langle x, y \rangle$ acts transitively on $E$.
    The group $G$ is also called the \textbf{cartographic group} (or monodromy group) of $M$.

    Define the set of ``black vertices'' as the set of cycles of $x$, denoted $U_1 := \cyc(x)$, the set of ``white vertices'' as the set of cycles of $y$, denoted $U_2 := \cyc(y)$, and the set of ``faces'' as the set of cycles of $yx$, denoted $U_3 := \cyc(yx)$.
    
    The genus of an algebraic map is defined by Euler's formula: $2 - 2g = \abs{U_1} + \abs{U_2} + \abs{U_3} - \abs{E}$.
    
    The weight of a ``vertex'' or ``face'' is defined as the length of the corresponding cycle $\wt : U_1 \sqcup U_2 \sqcup U_3 \to \Zpos, \alpha \mapsto \ell(\alpha)$.
\end{definition}

For isomorphisms of algebraic maps, there is a more general definition of morphisms, namely cover of the algebraic map. 

\begin{definition}\label{def:comb_map_cover}
    A \textbf{cover} $\phi : M_1 \to M_2$ between two algebraic maps $M_i = (E_i, x_i, y_i), i=1,2$ is defined as a surjection on the edge sets $\phi : E_1 \to E_2$ that commutes with the permutations on the edge sets: $x_2 \circ \phi = \phi \circ x_1, y_2 \circ \phi = \phi \circ y_1$.

    If $\phi : M_1 \to M_2$ is also a bijection, then $\phi$ is called an \textbf{isomorphism}.
\end{definition}

\begin{proposition} [\cite{LsZak04}, Section 1.5.1] \label{prop:map_and_algebraic_map}
    There is a one-to-one correspondence between the sets of non-isomorphic (unlabeled) maps (embedded on surfaces) and non-isomorphic algebraic maps.

    Let $M_G = (S_{g_G}; E_G, U_{G, k})$ be a map with weight function $\wt_G$. Its corresponding algebraic map is $M_C = (E_C, x, y)$, where the sets of ``vertices'' and ``faces'' of the algebraic map are $U_{C,k}$, the genus $g_C$, and the weight function $\wt_C$ given by Definition \ref{def:comb_map}. 
    The correspondence provides the following correspondences between elements
    \begin{equation*}
        E_G \cong E_C \ ,\ U_{G, k} \cong U_{C, k} \ ,\ g_G = g_C \ ,\ \wt_G \equiv \wt_C \ .
        \qedheremath
    \end{equation*}
    
\end{proposition}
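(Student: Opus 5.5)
The plan is to build mutually inverse constructions between maps on surfaces and algebraic maps, using the standard dart (edge) description of bicolored maps. Because every edge of a hypermap joins a black vertex to a white vertex, edges play the role of darts, and no flags or half-edges are needed.

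\textbf{From a map to an algebraic map.} Let $M_G = (S_{g_G}; E_G, U_{G,k})$ be a map embedded in an oriented surface. Put $E_C := E_G$. Let $x$ send each edge $e$ to the next edge around its black endpoint in the positive (counterclockwise) rotation given by the orientation of $S_{g_G}$. Define $y$ in the same way at the white endpoint. The orbits of $\langle x\rangle$ are then exactly the edge sets around black vertices, and likewise for $y$ and white vertices. This gives $U_{G,1}\cong\cyc(x)$ and $U_{G,2}\cong\cyc(y)$, with cycle length equal to degree, so the weights agree.

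\textbf{Faces, transitivity, genus.} Walking along the boundary of a face, we alternately turn at a black and a white vertex. Hence each face boundary, traversed with the face on the left, visits its black-to-white edge sides in the order given by the cycle of $yx$. This yields $U_{G,3}\cong\cyc(yx)$. The number of sides is twice the cycle length, which matches the definition of face weight as half the polygon's side count. Transitivity of $\langle x,y\rangle$ follows because the graph is connected: every face is a disk and the surface is connected. The genus statement $g_G=g_C$ then follows by comparing Euler's formula on the surface with Definition \ref{def:comb_map}.

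\textbf{From an algebraic map to a map.} Given a transitive $(E,x,y)$, we build the ribbon graph. Take a black vertex for each cycle of $x$ and a white vertex for each cycle of $y$. For each $e\in E$, take an edge joining the cycle of $x$ containing $e$ to the cycle of $y$ containing $e$, with cyclic orders at vertices prescribed by $x$ and $y$. Thicken this to an oriented ribbon surface. Then glue a disk along each boundary component; these boundary components correspond to cycles of $yx$ by the same face-tracing computation. The result is a closed oriented surface, connected by transitivity, in which every face is an open disk, so it is a map.

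\textbf{Isomorphisms and inverse constructions.} An orientation-preserving homeomorphism $I$ between maps induces a bijection on edges. This bijection commutes with the rotations $x$ and $y$, since $I$ preserves orientation and hence cyclic orders, so it is an algebraic isomorphism in the sense of Definition \ref{def:comb_map_cover}. Conversely, an algebraic isomorphism extends to a homeomorphism of the ribbon surfaces and then across the glued disks. Applying the two constructions in either order recovers the original object up to isomorphism.

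\textbf{Main obstacle.} The key topological input is that a cellularly embedded graph is determined up to orientation-preserving homeomorphism by its rotation system. This is the classical Heffter–Edmonds theorem, and I would cite it rather than reprove it. The only other point needing care is the orientation convention in the face computation: the chosen conventions must produce $yx$ rather than $xy$. If they come out the other way, we simply reverse the rotation directions so that faces correspond to cycles of $yx$, as required by Definition \ref{def:comb_map}.
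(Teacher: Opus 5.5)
Your proposal is correct. The paper gives no proof of its own and simply quotes Lando--Zvonkin \cite{LsZak04}, Section 1.5.1, whose correspondence rests on the same standard rotation-system construction you describe: edges serve as darts, $x$ and $y$ are the rotations at black and white vertices, faces are the cycles of the product, and the ribbon-graph construction runs in reverse.

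Your orientation worry is harmless. With counterclockwise rotations at both colours, traversing a face with the face on the left follows $(yx)^{-1}$, and its cycles are the same edge sets as those of $yx$, so no convention needs flipping.
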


We still denote the set of all non-isomorphic algebraic maps of genus $g$ with edge number $\abs{E} = n$ by $\Mc(g, n)$.
Similarly, we can attach a labeling function $\Lab_k : U_k \to [|U_k|]$ to algebraic maps to obtain labeled algebraic maps, and define the passport of a \textbf{labeled algebraic map}.
The set of all labeled algebraic maps with passport $\pp$ is denoted by $\Mc(\pp)$.

\bigskip

We continue to study covers between algebraic maps, which will be of great use in subsequent studies.

An algebraic map cover $\phi : M_1 \to M_2$ also induces a \textbf{epimorphism between cartographic groups} $\phi_G : G_1 \to G_2$, satisfying $\phi_G(h) \circ \phi = \phi \circ h$.
Here $\phi_G$ can be constructed by setting $\phi_G(x_1) = x_2, \phi_G(y_1) = y_2$ and using the property that $G_1, G_2$ are generated by $x_1, y_1$ or $x_2, y_2$, respectively.
An isomorphism $\phi : M_1 \to M_2$ induces an isomorphism of cartographic groups $\phi_G : G_1 \to G_2$.
Moreover, if the edge sets of $M_1$ and $M_2$ are the same set $E$, then $\phi \in \Sf_E$ and the group isomorphism $\phi_G(h) = \phi \circ h \circ \phi^{-1} := h^{\phi}$ is a conjugacy action.

If $\phi \in \Aut(M)$ is an automorphism of an algebraic map, then it must satisfy $\phi_G(h) = h^{\phi} = h$, i.e., $\phi$ commutes with every $h \in G$, which means $\phi \in C_{\Sf_E}(G)$, the centralizer of the cartographic group $G$ in $\Sf_E$.
Thus, the \textbf{automorphism group} $\Aut(M) \cong C_{\Sf_E}(G)$.

Similarly, we can define the \textbf{covering transformation group} of an algebraic map cover $\phi$ as $\Aut(\phi) := \defsetsmall{I \in \Aut(M_1)}{\phi = \phi \circ I}$.
Cover $\phi$ is \textbf{regular cover} if $\Aut(\phi)$ acts transitively on the fiber $\phi^{-1}(e)$.
Every regular cover can be obtained by taking a subgroup $A \leq \Aut(M_1)$ of the automorphism group and defining $M_2 := M_1 / A$ to get $\phi_A : M_1 \to M_2$, where the edge set of $M_2$ is the orbit set $E_2 := E_1 / A$, and $\phi_A : E_1 \to E_2, e \mapsto [e]$ is the canonical surjection.

\bigskip
%%%%% subsection %%%%%
\subsection{Rooted Algebraic Maps and Totally Numbered Maps} \label{sec:rooted_and_numbered_map}

By pairing an algebraic map with a root edge $(M, e)$ and requiring that the isomorphism mapping $I : M_1 \to M_2$ also preserves the root edge $I(e_1) = e_2$, we can define rooted algebraic maps.

However, we can also fix an edge set $E = \{e_1, \ldots, e_n\}$, set the root edge to be $e_1$, and modify the definition of the isomorphism of maps. 
This gives a better definition of rooted algebraic maps. 

\begin{definition}
    A \textbf{rooted algebraic map} with $n$ edges is denoted by $M_R = (E, x, y)$, the same as the algebraic map.
    
    Fix the edge set $E = \{e_1, \ldots, e_n\}$.
    Given two rooted algebraic maps $M_{R, i} = (E, x_i, y_i), i=1,2$, we say that $\phi \in \Sf_E$ is an isomorphism between rooted algebraic maps $M_{R,1}$ and $M_{R, 2}$ if
    $x_2 \circ \phi = \phi \circ x_1, y_2 \circ \phi = \phi \circ y_1$, and $\phi(e_1) = e_1$.
\end{definition}

Rooted algebraic maps correspond one-to-one with rooted maps. The set of all non-isomorphic rooted algebraic maps of genus $g$ with $n$ edges is still denoted by $\Mc_R(g, n)$, and by $\Mc_R(\pp)$ when a passport $\pp$ is given.

\bigskip

We can also consider \textbf{totally (edge-)numbered} maps. 
They still take the form $M_N = (E, x, y)$, but contain no isomorphisms, making the identification of such algebraic maps very easy.
The corresponding totally numbered maps embedded on surfaces are described as follows.

Notice that for vertices and cycles we use ``labeled'', now for edges we use ``numbered''. 

\begin{definition}
    A \textbf{totally (edge-)numbered algebraic map} with $n$ edges is still denoted by $M_N = (E, x, y)$.
    When the same edge set $E = \{e_1, \ldots, e_n\}$ is fixed, two totally numbered algebraic maps are considered identical only if both $x$ and $y$ are exactly the same.

    A \textbf{totally numbered map} $(M, (e_i)_{i=1}^n)$ with $n$ edges consists of a map $M = (S_g; E, U_k)$ and a permutation $(e_i)_{i=1}^n$ of the edge set $E = \{e_1, \ldots, e_n\}$.
    An isomorphism $I : S_g \to S_g$ of totally numbered maps $(M_i, (e_{i, j})_{j=1}^n), i=1,2$ must satisfies $I(M_1) = M_2$ and also preserves the edge permutation $I(e_{1, j}) = e_{2, j}, 1 \leq j \leq n$.

    Totally numbered algebraic maps correspond one-to-one with totally numbered maps. The sets they form are denoted by $\Mc_N(g, n)$ or $\Mc_N(\pp)$. 
\end{definition}

%\bigskip

We can see that, fixing $E = \{e_1, \ldots, e_n\}$,
a bijection $\phi \in \Sf_E$ can be viewed as acting on a totally numbered algebraic map $(E, x, y) \in \Mc_N(g, n)$, yielding a change of the map
\begin{equation} \label{eq:SE_action_on_numbered_map}
    \phi(E, x, y) = (E, x^\phi, y^\phi) \ .
\end{equation}

In the sense of totally numbered maps, when $x \neq x^\phi$ or $y \neq y^\phi$, the maps $(E, x, y)$ and $(E, x^\phi, y^\phi)$ are of course different.
However, when considered as ordinary algebraic maps, the maps $(E, x, y)$ and $(E, x^\phi, y^\phi)$ are isomorphic. 
It is easy to verify that the orbits of this conjugacy action of $\Sf_E$ on $\Mc_N(g, n)$ are exactly the equivalence classes of algebraic maps $\Mc(g, n) = \Mc_N(g, n) / \Sf_E$.

Similarly, the orbits of the subgroup action $\Sf_{E \setminus \{e_1\}} \curvearrowright \Mc_N(g, n)$ are exactly the rooted algebraic maps $\Mc_R(g, n) = \Mc_N(g, n) / \Sf_{E \setminus \{e_1\}}$.
Since this group action is faithful ($\phi \in \Sf_{E \setminus \{e_1\}}, \phi(M_N) = M_N \implies \phi = \id$, i.e., rooted maps have no automorphisms), we have the simple numerical relation $\big| \Sf_{E \setminus \{e_1\}} \big| \cdot \abs{\Mc_R(g, n)} = \abs{\Mc_N(g, n)}$.

\begin{lemma} \label{lem:group_action_on_numbered_map}
    Given genus $g$ and number of edges $n$, fix $E = \{e_1, \ldots, e_n\}$. 
    Formula \eqref{eq:SE_action_on_numbered_map} defines the group action $\Sf_E \curvearrowright \Mc_N(g, n)$. 
    The subgroup $\Sf_{E \setminus \{e_1\}} \leq \Sf_E$ provides the faithful subgroup action $\Sf_{E \setminus \{e_1\}} \curvearrowright \Mc_N(g, n)$. 
    The orbits of these two group actions are the algebraic maps and the rooted algebraic maps
    \begin{equation} \label{eq:unrooted_and_rooted_is_numbered_quotient}
        \Mc(g, n) = \Mc_N(g, n) / \Sf_E \ ,\ \Mc_R(g, n) = \Mc_N(g, n) / \Sf_{E \setminus \{e_1\}} \ .
    \end{equation}
    And due to the faithfulness of $\Sf_{E \setminus \{e_1\}} \curvearrowright \Mc_N(g, n)$,
    \begin{equation} \label{eq:numbered_to_rooted}
        \abs{\Mc_N(g, n)} = (n-1)! \abs{\Mc_R(g, n)} \ .
    \end{equation}

    Replacing the genus and edge number requirement with a passport $\pp$ yields similar formulas.
    \qed
\end{lemma}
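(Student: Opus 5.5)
The plan is to verify the three assertions of Lemma \ref{lem:group_action_on_numbered_map} directly from the definitions: that formula \eqref{eq:SE_action_on_numbered_map} is a group action, that its orbits are the stated sets, and that the restricted action is free.

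\textbf{Step 1: it is a group action.} Conjugation $x \mapsto x^\phi = \phi x \phi^{-1}$ satisfies $(x^{\psi})^{\phi} = x^{\phi\psi}$ and $x^{\id} = x$, so $\Sf_E$ acts. The action preserves the set $\Mc_N(g,n)$. Indeed, $\langle x^\phi, y^\phi\rangle = \phi \langle x,y\rangle \phi^{-1}$ is still transitive on $E$. Moreover, conjugation preserves cycle types, and $(yx)^\phi = y^\phi x^\phi$, so the cycle types of $x$, $y$ and $yx$ are unchanged. Hence the numbers of vertices and faces are unchanged, and so are the genus (by Euler's formula) and all weights, i.e.\ the unlabeled passport. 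With a labeled passport one transports the labelings along the induced bijections of cycles $\alpha \mapsto \phi\alpha\phi^{-1}$, so the statement with $\pp$ follows in the same way.

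\textbf{Step 2: orbits of $\Sf_E$.} By Definition \ref{def:comb_map_cover}, an isomorphism $(E,x_1,y_1)\to(E,x_2,y_2)$ with the common edge set $E$ is a bijection $\phi\in\Sf_E$ with $x_2\phi=\phi x_1$ and $y_2\phi=\phi y_1$. This says exactly $x_2=x_1^\phi$ and $y_2=y_1^\phi$. So two numbered maps are isomorphic as algebraic maps if and only if they lie in the same $\Sf_E$-orbit. Every algebraic map with $n$ edges is isomorphic to one on the fixed set $E$, obtained by choosing any bijection with $E$. Together with Proposition \ref{prop:map_and_algebraic_map}, this gives $\Mc(g,n)=\Mc_N(g,n)/\Sf_E$.

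\textbf{Step 3: orbits of $\Sf_{E\setminus\{e_1\}}$.} Repeat Step 2 with the extra requirement $\phi(e_1)=e_1$, which is precisely the definition of rooted isomorphism. Any rooted map can be transported to $E$ with its root sent to $e_1$. This gives $\Mc_R(g,n)=\Mc_N(g,n)/\Sf_{E\setminus\{e_1\}}$.

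\textbf{Step 4: freeness, the key point.} Suppose $\phi$ fixes $e_1$ and $x^\phi=x$, $y^\phi=y$. Then $\phi$ commutes with every $h\in G=\langle x,y\rangle$. By transitivity, every $e\in E$ can be written $e=h e_1$ for some $h\in G$. Then $\phi(e)=\phi h e_1=h\phi e_1=he_1=e$, so $\phi=\id$. Thus all stabilizers are trivial, every orbit has size $|\Sf_{E\setminus\{e_1\}}|=(n-1)!$, and \eqref{eq:numbered_to_rooted} follows.

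No step is a real obstacle. The only care needed is the bookkeeping for the labeled-passport version in Step 1, where one must check that transported labelings give a well-defined action; this holds because $(\alpha^\psi)^\phi=\alpha^{\phi\psi}$ on cycles as well.
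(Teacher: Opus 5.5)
Your proposal is correct and follows the paper's route. The paper observes that the $\Sf_E$-orbits and the $\Sf_{E\setminus\{e_1\}}$-orbits are, by the definition of (rooted) isomorphism, exactly the (rooted) isomorphism classes, and that stabilizers in $\Sf_{E\setminus\{e_1\}}$ are trivial because rooted maps have no automorphisms; your Step 4 makes that last point explicit via transitivity of $\langle x,y\rangle$, and rightly notes that what the count needs (and what the paper's ``faithful'' means here) is freeness.
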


%\bigskip

\begin{definition}
    The unnumbered version of a totally numbered map $M_N \in \Mc_N(\pp)$ is denoted by $M \in \Mc(\pp)$, which is the orbit of $M_N$ under the action of the group $\Sf_E$.
\end{definition}

\bigskip
%%%%% subsection %%%%%
\subsection{Enumeration between Unrooted Maps and Symmetric Maps} \label{sec:l_fold_map_enum}

Given an unlabeled passport $\pp$.
For $\phi \in \Sf_E$, let 
$$
    \fix(\pp; \phi) := \defsetsmall{M_N \in \Mc_N(\pp)}{\phi(M_N) = M_N}
$$
be the elements that are invariant under the action of $\phi$.
That is, when $M_N = (E, x, y)$, we have $x = x^\phi, y = y^\phi$.

Since an unrooted map is an orbit under the group action $\Mc(\pp) = \Mc_N(\pp) / \Sf_E$, by Burnside's Lemma in group actions
\begin{equation}
    \abs{\Mc(\pp)} = \sum_{\phi \in \Sf_E} \frac1{n!} \abs{\fix(\pp; \phi)}  \ .
\end{equation}
We only need to find $\abs{\fix(\pp; \phi)}$.

Now take $\phi$ such that $\fix(\pp; \phi) \neq \varnothing$.
Since there exist $x, y$ such that $G = \langle x, y \rangle$ acts transitively on $E$, and $x = x^\phi, y = y^\phi$, by a theorem in algebraic map theory, $\phi$ is a regular permutation, i.e., it can be decomposed into $m$ cycles of length $l$
$$
\phi = \alpha_1 \cdots \alpha_m \ ,\ \ell(\alpha_i) = l \ ,\ lm = n \ . 
$$
Clearly, given another permutation $\phi'$ of the same cycle type $(l^m)$, $\phi$ and $\phi'$ are conjugate, hence there is a bijection $\fix(\pp; \phi) \cong \fix(\pp; \phi')$.
Thus we only need to pick one $\phi_{l, m}$ for each cycle type $(l^m)$ and compute $\abs{\fix(\pp; \phi_{l, m})}$.

Since the number of permutations within cycle type $(l^m)$ is $n! / (m! \ l^m)$, we have
\begin{lemma} \label{lem:unrooted_from_fix_enum}
    Given an unlabeled passport $\pp$, the number of  unrooted maps $\Mc(\pp)$ can be obtained by the following formula
    \begin{equation} \label{eq:unrooted_from_sum_fix}
    \abs{\Mc(\pp)} = \sum_{lm = n} \frac1{m! \ l^m} \abs{\fix(\pp; \phi_{l, m})} \ . 
    \end{equation}

    Here the cycle type of $\phi_{l, m}$ is $(l^m)$.
    \qed 
\end{lemma}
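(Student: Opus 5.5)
The plan is to apply Burnside's lemma to the conjugation action $\Sf_E \curvearrowright \Mc_N(\pp)$ from Lemma \ref{lem:group_action_on_numbered_map}, and then group the permutations $\phi \in \Sf_E$ by cycle type. By \eqref{eq:unrooted_and_rooted_is_numbered_quotient}, restricted to the passport $\pp$, we have $\Mc(\pp) = \Mc_N(\pp)/\Sf_E$. Burnside's lemma then gives
\[
\abs{\Mc(\pp)} = \frac{1}{n!}\sum_{\phi\in\Sf_E}\abs{\fix(\pp;\phi)} .
\]
It remains to show that only the regular permutations contribute and that the contribution depends only on the cycle type.

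\textbf{Step 1: only semiregular $\phi$ contribute.} Suppose $\fix(\pp;\phi)\neq\varnothing$. Then there exists $M_N=(E,x,y)$ with $x^\phi=x$ and $y^\phi=y$. Hence $\phi$ centralizes $G=\langle x,y\rangle$, which acts transitively on $E$.

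The centralizer of a transitive group acts semiregularly. Concretely, if $\phi^k(e)=e$ for some $e\in E$, then for every $h\in G$ we get $\phi^k(h e)=h\phi^k(e)=he$. By transitivity this forces $\phi^k=\id$. So the stabilizer of every point in $\langle\phi\rangle$ is trivial, and every cycle of $\phi$ has length equal to the order $l$ of $\phi$. Thus $\phi$ has cycle type $(l^m)$ with $lm=n$. All other $\phi$ have empty fixed set and drop out of the sum.

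\textbf{Step 2: invariance under conjugacy.} Let $\phi'=\psi\phi\psi^{-1}$ with $\psi\in\Sf_E$. The map $M_N\mapsto\psi(M_N)$ sends $\fix(\pp;\phi)$ bijectively onto $\fix(\pp;\phi')$. This works because $\phi'(\psi M_N)=\psi\phi(M_N)$, and because conjugation preserves cycle types of $x$, $y$ and $yx$. So the passport is preserved, as is transitivity. Therefore $\abs{\fix(\pp;\phi)}$ depends only on the conjugacy class, i.e.\ on the cycle type $(l^m)$. We may replace each such $\phi$ by the chosen representative $\phi_{l,m}$.

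\textbf{Step 3: counting the class.} The number of permutations of cycle type $(l^m)$ in $\Sf_n$ is $n!/(m!\,l^m)$, since the centralizer has order $m!\,l^m$. Substituting into the Burnside sum, the factor $n!$ cancels and we obtain \eqref{eq:unrooted_from_sum_fix}.

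The only step with real content is Step 1, the semiregularity of the centralizer of a transitive group. The argument given there is short, so I do not expect a genuine obstacle. The one point needing care is that the action of $\Sf_E$ on $\Mc_N(\pp)$ preserves the passport, so that the orbits are exactly $\Mc(\pp)$.
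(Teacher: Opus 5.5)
Your proposal is correct and follows the same route as the paper. The paper also applies Burnside's lemma to $\Sf_E \curvearrowright \Mc_N(\pp)$, discards every $\phi$ whose cycle type is not $(l^m)$, uses the fact that $\fix(\pp;\phi)$ depends only on the conjugacy class, and multiplies by the class size $n!/(m!\,l^m)$. The only difference is that you prove the semiregularity of the centralizer of the transitive group $\langle x,y\rangle$ and the conjugation bijection explicitly, whereas the paper cites ``a theorem in algebraic map theory'' for the first and says ``clearly'' for the second.
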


\bigskip

Finally, the enumeration of unrooted maps $\Mc(\pp)$ is reduced to the enumeration of symmetric maps in $\fix(\pp; \phi_{l, m})$.
The maps $M_N \in \fix(\pp; \phi_{l, m})$ are all totally numbered.
Lemma \ref{lem:group_action_on_numbered_map} indicates that the counting of totally numbered maps is related to the enumeration of rooted maps, this enables us to obtain the enumeration of unrooted maps via the simpler enumeration of rooted maps.

Moreover, the unlabeled version $M$ of the map $M_N$ contains an automorphism $\phi_{l, m} \in \Aut(M)$.
This automorphism generates a cyclic group $\langle \phi_{l, m} \rangle \cong \ZZ_l \leq \Aut(M)$.
The map $M$ admits a regular cyclic cover $N := M / \langle \phi_{l, m} \rangle$ under this subgroup.
This suggests that we can indirectly obtain the enumeration of $\fix(\pp; \phi_{l, m})$ by studying the enumeration of the quotient map $N$.

\bigskip
%%%%% section %%%%%
\section{Map Regular Covers and Maps on Orbifolds} \label{ch:map_cover_and_orbifold_map}

The goal of this section is to state and prove Lemma \ref{lem:symmetric_map_to_orbifold_map}.
It shows that the symmetric map in Lemma \ref{lem:unrooted_from_fix_enum} can be transformed into a quotient map on orbifold.

In Section \ref{sec:map_cover}, we describe the definition of regular cover of a map, and the related definitions are parallel to those of algebraic maps.

Taking the quotient of the underlying surface of a map by a subgroup of the automorphism group, the quotient surface is generally an orbifold, and thus the corresponding quotient map is naturally on orbifold.
The definition of a map on orbifold and its passport are described in Section \ref{sec:sigma_and_orbifold_map}.

For symmetric map in Lemma \ref{lem:unrooted_from_fix_enum}, it has a cyclic group as a subgroup of the automorphism group, so a quotient map under a regular cyclic cover can be defined.
Section \ref{sec:regular_cyclic_cover} explores the relationship between the orbifold symbol of the corresponding orbifold and the passport of the map under a regular cyclic cover (Lemma \ref{lem:passport_multi_when_cyclic_regular_cover}). 
We finally obtain the transformation from a symmetric map to a quotient map (Lemma \ref{lem:symmetric_map_to_orbifold_map}).

\bigskip
%%%%% subsection %%%%%
\subsection{Map Regular Covers} \label{sec:map_cover}

\begin{definition}
    Given two (unlabeled) bicolored maps $M_i = (S_{g_i}; E_i, U_{i, k}), i = 1, 2$.
    Assume they are given by dessins d'enfants in Proposition \ref{prop:dessin}, so that $S_{g_i}, i=1,2$ are compact Riemann surfaces of genus $g_i$.
    
    A \textbf{map cover} $\phi : M_1 \to M_2$ is defined as a surjective holomorphic mapping $\phi : S_{g_1} \to S_{g_2}$, inducing a surjective correspondence of map elements $\phi(E_1) = E_2, \phi(U_{1, k}) = U_{2, k}, k=1,2,3$.
\end{definition}

%\bigskip

The map cover defined above is the map version of that of algebraic map defined previously.
Moreover, if there exist inverse map covers $\phi : M_1 \to M_2, \phi^{-1} : M_2 \to M_1$, then $\phi$ is a map isomorphism as defined previously.

\bigskip

The \textbf{covering transformation group} of $\phi : M_1 \to M_2$ is the set of automorphisms of $M_1$ preserving $\phi$, $\Aut(\phi) := \defsetsmall{I \in \Aut(M_1)}{\phi = \phi \circ I}$.
$\phi$ is called a \textbf{regular cover} if the action of the covering transformation group $\Aut(\phi)$ on the fiber $\phi^{-1}(e)$ of each edge $e \in E_2$ is transitive.
Given a subgroup $A \leq \Aut(M_1) \leq \Aut(S_{g_1})$ of the automorphism group of a map $M_1$, one can define a regular cover $\phi_A : M_1 \to M_1 / A$, where the holomorphic mapping is defined as $\phi_A : S_{g_1} \to S_{g_1} / A$. Naturally, $A = \Aut(\phi_A)$ is the covering transformation group.

Now fix a regular cover $\phi : M_1 \to M_2 := M_1 / A$. 
Since a non-constant holomorphic mapping is a branched cover, we can prove
\begin{enumerate}
    \item On edges, $\phi$ is unramified.
    That is, $E_2 = E_1 / A$ is the set of orbits of a faithful action.
    \item On black vertices, white vertices, and faces, $\phi$ may be ramified, and there is at most one ramification point in each face.
    Thus, on the set of orbits $U_{2, k} = U_{1, k} / A$, one can define the \textbf{cone point order function}
    \begin{equation}
        \nu_{M_2} : U_{2, 1} \sqcup U_{2, 2} \sqcup U_{2, 3} \to \Zpos \ ,\ u \mapsto |A|\ \big/\ \abs{\phi^{-1}(u)} \ .
    \end{equation}
    This function is also the common multiplicity of the branch points $\td{u} \in \phi^{-1}(u)$ of the holomorphic mapping $\phi$.
    Moreover, it is easy to prove that $\nu_{M_2}(u) = \wt_{M_1}(\td{u}) \ /\ \wt_{M_2}(u), \td{u} \in \phi^{-1}(u)$, determined by the properties of the map.
\end{enumerate}

%\bigskip

If we collect the cone point orders $\nu_{M_2}$ at vertices and faces into an unordered sequence $\range \nu_{M_2}$, and add the surface genus $g_2$, we obtain an \textbf{orbifold symbol} $\sigma = (g_2; \range \nu_{M_2})$.
Then $\phi : S_g \to O(\sigma) = S_g / A$ is a regular orbifold cover.
Moreover, the map $M_2$ can be regarded as a map on the orbifold $O(\sigma)$.

Thus, a regular map cover $\phi : M_1 \to M_2$ can be redefined as
\begin{definition} \label{def:map_regular_cover}
    A \textbf{regular map cover} $\phi : M_1 \to M_2$ from a map $M_1$ on $S_{g_1}$ to a map $M_2$ on $O(\sigma)$ is defined as a regular orbifold cover $\phi : S_{g_1} \to O(\sigma)$, together with a surjective correspondence of map elements $\phi(E_1) = E_2, \phi(U_{1, k}) = U_{2, k}, k=1,2,3$,
    such that the cone point orders of $M_2$ defined by $\nu_{M_2}(u) = \wt_{M_1}(\td{u}) \ /\ \wt_{M_2}(u), \td{u} \in \phi^{-1}(u)$, are exactly the cone point orders of  orbifold $O(\sigma)$.
\end{definition}

The definitions of the orbifold $O(\sigma)$ and maps on orbifold are given in the next subsection.

\bigskip
%%%%% subsection %%%%%
\subsection{Orbifold Symbols and Maps on Orbifolds} \label{sec:sigma_and_orbifold_map}

Recall that an \textbf{orbifold symbol} $\sigma = (g; \{t_1, \ldots, t_r\})$ consists of a genus $g$ and an unordered sequence of cone point orders. 
The \textbf{orbifold} $O(\sigma)$ can be constructed as follows: start with a compact oriented surface $S_g$ of genus $g$, take $r$ points $R = \{p_1, \ldots, p_r\}$ on it; endow $S_g$ with a cone point order function $\nu_O : S_g \to \Zpos$, which takes possibly non-one values only at the $r$ orbifold points $R$, with $\nu_O(p_i) = t_i$, and at all other points $\nu_O(S_g \setminus R) = \{1\}$.
The orbifold $(S_g, \nu_O)$ defined in this way is unique up to diffeomorphism. 

\bigskip

Next, we define a map on orbifold. It is similar to a map on a surface.

\begin{definition}
    Given an orbifold symbol $\sigma$ and an orbifold $O(\sigma) = (S_g, \nu_O)$.
    Similar to a map on surface, an (unlabeled) \textbf{maps on orbifold} consists of an abstract graph $G = (V, E)$ and an embedding $i : G \to S_g$. 
    There are no cone points on edges, cone points may lie on vertices and in faces, and there is at most one cone point in each face.
\end{definition}

%\bigskip

A map on orbifold is denoted by $M = (O(\sigma); E, U_k) = (S_g, \nu_O; E, U_k)$.
A regular cover of maps on orbifolds is given by Definition \ref{def:map_regular_cover}.
Isomorphisms of maps on orbifolds can also be defined similarly to those of maps on surfaces.

The orbifold cone point order function $\nu_O$ can be restricted to the vertex set $\nu_M : V \to \Zpos$.
For a face $f$, if it contains a cone point of order $t$, then its cone point order function $\nu_M(f) = t$; otherwise define $\nu_M(f) = 1$.
This defines the \textbf{cone point order function at vertices and faces} $\nu_M : V \sqcup U_3 \to \Zpos$.
Conversely, given only the cone point order function $\nu_M$ at vertices and faces,
one can also determine the orbifold symbol $\sigma = (g; \range \nu_M)$, where $\range \nu_M$ denotes the unordered sequence formed by the image of $\nu_M$.
Thus a bicolored maps on orbifold can also be denoted by $M = (S_g; E, U_k; \nu_M)$.

\bigskip

In a map on orbifold, if the weight or cone point order differs, the vertices are distinct.
Thus, the weight and the cone point order together play the role of the labeling function.
Therefore, the passport of a map on orbifold can be defined as a \textbf{passport labeled by an orbifold symbol}.

Recall that a passport $\ppsi = (g, n; \Pi_1, \Pi_2, \Pi_3)$ labeled by an orbifold symbol $\sigma = (g; \bmt)$ has the same genus $g$.
And it is required that the unordered sequence of cone point orders $\bmt$ can be partitioned into unordered sequences, as in \eqref{eq:sigma_split}
\begin{equation*}
    \bmt = \bigsqcup_{i=1}^3 \bigsqcup_{j=1}^{N_i} \left\{ \ t_{ij}^{\lambda_{ij}} \ \right\} \ ,
\end{equation*}
and the weight distribution $\Pi_i = (S_i, \lambda_i, \wt_i)$ should be writable in the form of \eqref{eq:sigma_labeled_passport} as
\begin{equation*}
    S_i = \defsetsmall{(w_{ij}, t_{ij})}{i=1,2,3\ , \ 1 \leq j \leq T_i} \ ,\ 
    \lambda_i(w_{ij}, t_{ij}) = \lambda_{ij} \ ,\ 
    \wt_i(w_{ij}, t_{ij}) = w_{ij} \ .
\end{equation*}

%\bigskip

We use the following notation for passports labeled by orbifold symbols.

\begin{notation} \label{notat:sigma_labeled_passport}
Assume a passport labeled by an orbifold symbol having the form in \eqref{eq:sigma_split} and \eqref{eq:sigma_labeled_passport}. 
Record the cone point order $t_{ij}$ in its weight distribution as a left subscript of the weight $w_{ij}$, and the multiplicity $\lambda_{ij}$ as a right superscript, i.e.,
\begin{equation}
    \Pi_i = \prod_{j = 1}^{N_i} {_{t_{ij}} (w_{ij})^{\lambda_{ij}}} \ .
\end{equation}

For example, if $\sigma = (0; 2^4) = (0; 2^4\ 1^2)$, then $\ppsi = (0, 4; {_2 4} \ ,\ {_2 2^2} \ , \ {_2 2} \ {_1 1^2})$ is a passport labeled by an orbifold symbol.
\end{notation}

\bigskip

\begin{definition} \label{def:orbifold_map_passport}
    Given an orbifold symbol $\sigma = (g; \bmt)$ and a passport $\ppsi = (g, n; \Pi_1, \Pi_2, \Pi_3)$ labeled by $\sigma$,
    and assume that the weight distribution $\Pi_i$ has the form of \eqref{eq:sigma_labeled_passport}.
    
    An maps on orbifold $M = (S_g; E, U_k; \nu_M)$ corresponding to the passport $\ppsi$ should satisfy that the genus $g$ and the number of edges is the same; 
    and there are exactly $\lambda_{1j}$ black vertices with weight $w_{1j}$ and cone point order $t_{1j}$; for white vertices and faces, the requirement is the same.
\end{definition}

%\bigskip

Note that if a passport $\ppsi$ labeled by an orbifold symbol is equivalent to a general passport $\pp$, then also $\Mc(\ppsi) \cong \Mc(\pp)$.
In this sense, these two types of passports are indistinguishable.
That is, a map on orbifold can be viewed as a labeled map.

\bigskip
%%%%% subsection %%%%%
\subsection{Regular Cyclic Covers of Maps} \label{sec:regular_cyclic_cover}

In Section \ref{sec:map_cover}, we already know that a regular cover of a map $\phi : M \to N$ can be defined by a subgroup $A \leq \Aut(M)$ of map automorphisms, so that $N = M / A$.
Moreover, the regular cover gives the cone point orders $\nu_{N}$ at the vertices and faces of the map $N$.

Now we require that the subgroup $A \cong \ZZ_l$ is a cyclic group, and the corresponding cover is a regular cyclic cover. Also, given an unlabeled passport $\pp = (g_1, n; \pi_1, \pi_2, \pi_3)$, let $M \in \Mc(\pp)$.
Let $N = M / \ZZ_l$ be a map embedded in the orbifold $O(\sigma = (g_2; \bmt))$, with passport $\ppsi = (g_2, m; \Pi_1, \Pi_2, \Pi_3)$.
We wish to find the relationship between $g_1$ and $\sigma$, and between $\pp$ and $\ppsi$.
We already have the relation for the number of edges $n = lm$.

Since the regular cover of a map is a regular cover of its underlying surface, $\phi : S_{g_1} \to O(\sigma)$ is also a regular cyclic cover $O(\sigma) = S_{g_1} / \ZZ_l$.
Recall that this condition is $g = \sigma \times l$.
The following fact explains the conditions $\sigma$ should satisfy.

\begin{theorem}[Harvey \cite{Har66}] \label{thm:g_l_sigma_admissible}
    Given a genus $g_1$, a positive integer $l \in \Zpos$, and an orbifold symbol $\sigma = (g_2; \{t_1, \ldots, t_r\})$.
    Then the orbifold $O(\sigma) = S_{g_1} / \ZZ_l$ is a regular cyclic cover of $S_{g_1}$ of order $l$ if and only if all the following conditions hold
    \begin{enumerate}
    \item Let $T = \lcm(t_1, \ldots, t_r)$, then $T \ \big|\ l$, and if $g = 0$ then we must have $T = l$;
    \item Riemann-Hurwitz formula:
    $2 - 2g_1 = l \left(2 - 2g_2 - \sum_{i=1}^r \left(1 - \frac{1}{t_i}\right)\right)$;
    \item Let $T_i := \lcm(t_1, \cdots, t_{i-1}, t_{i+1}, \cdots, t_r)$; then for each $i$, $T_i = T$;
    \item If $T$ is even, let $2^e \ \big\|\ T$ be the highest power of $2$ dividing $T$; then the number of $t_i$ divisible by $2^e$ is even.
    \qed
    \end{enumerate}
\end{theorem}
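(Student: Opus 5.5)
The plan is to convert the geometric statement into a purely group-theoretic one about epimorphisms from the orbifold fundamental group onto $\ZZ_l$. I would use the notation $\Gamma(\sigma)$ and $\Epi_0$ that the paper already anticipates. Write
\[
\Gamma(\sigma) = \langle a_1,b_1,\ldots,a_{g_2},b_{g_2},c_1,\ldots,c_r \mid c_1^{t_1},\ldots,c_r^{t_r},\ \textstyle\prod_{i}[a_i,b_i]\,c_1\cdots c_r \rangle .
\]
By uniformization and the Riemann existence theorem, orientation-preserving actions of $\ZZ_l$ on $S_{g_1}$ with quotient orbifold $O(\sigma)$ correspond to epimorphisms $\theta:\Gamma(\sigma)\to\ZZ_l$ with torsion-free kernel. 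Since every torsion element of a Fuchsian (or Euclidean, or spherical) group is conjugate into some $\langle c_j\rangle$, torsion-freeness means $\theta(c_j)$ has order exactly $t_j$ for each $j$. The genus of the cover is then forced by multiplicativity of the orbifold Euler characteristic, $\chi(S_{g_1}) = l\,\chi^{orb}(O(\sigma))$, which is condition (2). So the theorem reduces to deciding when such an order-preserving epimorphism exists, given that (2) holds.

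Because $\ZZ_l$ is abelian, $\theta$ factors through the abelianization. It therefore amounts to choosing $u_i=\theta(a_i)$, $v_i=\theta(b_i)$ arbitrary and $x_j=\theta(c_j)\in\ZZ_l$ subject to three requirements: $x_j$ has order exactly $t_j$, $x_1+\cdots+x_r=0$, and the $u_i,v_i,x_j$ generate $\ZZ_l$. I would first derive necessity.
\begin{enumerate}
\item Each $t_j$ divides $l$, so $T\mid l$. If $g_2=0$, the group is generated by the $x_j$ alone, which generate a subgroup of order $T$, so $T=l$. This gives (1).
\item From $x_i=-\sum_{j\neq i}x_j$, the element $x_i$ lies in a subgroup of order $T_i$, so $t_i\mid T_i$, i.e.\ $T_i=T$. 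This gives (3), and it also handles the degenerate case $r=1$.
\item For (4), project to the $2$-primary part, or equivalently multiply by $T/2^e$. The $x_j$ with $2^e\mid t_j$ map to elements of order exactly $2^e$; the others map into the subgroup of order $2^{e-1}$. Reducing modulo that subgroup into $\ZZ_2$, the relation $\sum x_j=0$ forces the number of such $j$ to be even.
\end{enumerate}

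For sufficiency I would construct the $x_j$ explicitly inside the cyclic subgroup $\ZZ_T\le\ZZ_l$ and work prime by prime via the Chinese remainder theorem. For each prime $p$ with $p^{a}\,\|\,T$, condition (3) guarantees at least two indices $j$ with $p^{a}\mid t_j$. I would choose the $p$-components of the $x_j$ so that each has order exactly the $p$-part of $t_j$ and their sum is zero. For odd $p$ this is possible with two or more top-order terms: one can always adjust the last top-order unit so the sum vanishes while keeping it a unit, since for odd $p$ a sum of units can be corrected by a unit. For $p=2$ one needs a sum of units of $\ZZ_{2^e}$ (modulo lower terms) to vanish, and condition (4) exactly supplies the even count needed.

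Generation comes next. If $g_2=0$, the $x_j$ generate $\ZZ_T=\ZZ_l$, because some $x_j$ has full $p$-part for each prime $p$. If $g_2>0$, take $u_1=1$ to get all of $\ZZ_l$.

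The main obstacle I expect is the sufficiency construction at the prime $2$ together with the unit-adjustment for odd primes when exactly two indices attain the top $p$-power: there the second element is forced to be the negative of the first plus lower terms, and I must check it remains of full order. I would handle this by choosing all lower-order components first and then solving for the last top-order component, verifying it is a unit mod $p$.
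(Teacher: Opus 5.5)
The paper gives no proof of this statement. It quotes Harvey and uses the result as a black box describing the condition $g=\sigma\times l$.

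Your argument is the standard proof, essentially Harvey's own, and it is correct. You reduce the claim to the existence of an order-preserving epimorphism, i.e.\ $\Epi_0(\Gamma(\sigma),\ZZ_l)\neq\varnothing$, together with Riemann--Hurwitz. The rest is elementary arithmetic in a cyclic group, done prime by prime. Compared with the paper's bare citation, your route has the advantage of tying the criterion to the same set $\Epi_0(\Gamma(\sigma),\ZZ_l)$ that the paper later counts in Proposition~\ref{prop:Epi_count} and uses in Theorem~\ref{thm:fix_enum}. The citation buys only brevity.

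Two points should be tightened.

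First, in the sufficiency direction you invoke uniformization, which needs $O(\sigma)$ to be a good orbifold. Proposition~\ref{prop:orbifold_universal_cover} excludes $O(0;t)$ and $O(0;t_1,t_2)$ with $t_1\neq t_2$. Condition (3) rules out exactly these: $T_1=1\neq t$ in the first case, and $T_1=t_2$, $T_2=t_1$ in the second. You should say this explicitly. Alternatively, use the topological Riemann existence theorem on $S_{g_2}$ minus the cone points, which avoids the issue entirely.

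Second, the obstacle you flag is misplaced. If exactly two indices attain $p^a$, all lower-order terms are divisible by $p$. The last top-order unit is then $u_2\equiv -u_1 \pmod p$, which is automatically a unit for every $p$, including $p=2$.

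The case that actually needs care is $k\ge 3$ top-order indices. After fixing the lower terms, the last unit is $\equiv -(u_1+\cdots+u_{k-1}) \pmod p$. So you must choose $u_1,\dots,u_{k-1}$ with sum $\not\equiv 0 \pmod p$.
\begin{itemize}
\item For odd $p$, take them all equal to $1$, replacing one of them by $2$ when $p\mid k-1$. Solving blindly for the last unit with all the others equal to $1$ would fail in that subcase.
\item For $p=2$, a sum of $k-1$ odd numbers is odd iff $k$ is even, which is precisely condition (4).
\end{itemize}
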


On the other hand, the relationship between $\pp$ and $\ppsi$ is also easy to obtain.
Assume the passport $\ppsi$ has the form of \eqref{eq:sigma_labeled_passport}.
Since the cone point order function of map $N$ satisfies $\nu_N(u) = \wt_M(\td{u}) / \wt(u), \td{u} \in \phi^{-1}(u)$, and $\abs{\phi^{-1}(u)} = l\ /\ \nu_N(u)$,
whenever $N$ has $\lambda_{ij}$ vertices or faces with weight $w_{ij}$ and cone point order $t_{ij}$, $M$ should have $\lambda_{ij} \cdot l\ /\ t_{ij}$ vertices or faces with weight $w_{ij} \cdot t_{ij}$.
Thus the weight distribution of $\pp$ should have the form
\begin{equation*}
    \pi_i := \bigsqcup_{j=1}^{N_i} \left\{ (w_{ij} \cdot t_{ij})^{\lambda_{ij} \cdot l / t_{ij}} \right\} \quad,\quad i=1,2,3 \ .
\end{equation*}
Recall that this condition is precisely the definition of \textbf{multiplication of an orbifold symbol and a passport} in \eqref{eq:sigma_pp_multi}.
Thus the relationship between $\pp$ and $\ppsi$ is $\pp = \sigma \times_l \ppsi$.

\begin{lemma} \label{lem:passport_multi_when_cyclic_regular_cover}
    Given an unlabeled passport $\pp = (g_1, n; \pi_1, \pi_2, \pi_3)$ and a map $M = \Mc(\pp)$.
    If the map $N = M / \ZZ_l$ is the quotient map of $M$ under a regular cyclic cover, and $N \in \Mc(\ppsi)$ is a map on the orbifold $O(\sigma)$,
    then the orbifold symbol $\sigma$ satisfies the condition $g = \sigma \times l$,
    and the passports satisfy the relation $\pp = \sigma \times_l \ppsi$.
    \qed
\end{lemma}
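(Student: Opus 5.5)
The claim combines two facts that are essentially already in place before the statement: Harvey's characterization (Theorem \ref{thm:g_l_sigma_admissible}) for the surface part, and a fiber count over the regular cover for the passport part. I would prove them in that order.

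\textbf{Surface part.} By Definition \ref{def:map_regular_cover}, the regular map cover $\phi : M \to N = M/\ZZ_l$ is in particular a regular cyclic orbifold cover $\phi : S_{g_1} \to S_{g_1}/\ZZ_l$. The quotient orbifold is recorded by the symbol $\sigma = (g_2; \range \nu_N)$, where $\nu_N$ is the cone point order function defined in Section \ref{sec:map_cover}. So $O(\sigma) = S_{g_1}/\ZZ_l$ holds by construction. This is exactly the condition $g = \sigma \times l$, and Theorem \ref{thm:g_l_sigma_admissible} then supplies its explicit arithmetic form. One point needs checking: every cone point of the orbifold must be recorded by $\nu_N$. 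This holds because $\phi$ is unramified on edges and has at most one ramification point in each face. Every branch point therefore lies at a vertex or at a face centre, so $\range \nu_N$ lists all the cone points, up to harmless $1$'s.

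\textbf{Passport part.} I will show that $\ppsi$ is labeled by $\sigma$ and that $\pp = \sigma \times_l \ppsi$.
\begin{enumerate}
\item Label each vertex or face $u$ of $N$ by the pair $(\wt_N(u), \nu_N(u))$. Collecting these pairs, with multiplicities $\lambda_{ij}$, gives the form \eqref{eq:sigma_labeled_passport}. The decomposition \eqref{eq:sigma_split} of $\bmt$ follows because each cone point sits at exactly one vertex or face.
\item For each element $u$ of $N$, the group $A \cong \ZZ_l$ acts transitively on the fiber $\phi^{-1}(u)$, and each point of the fiber has stabilizer of order $\nu_N(u)$. Hence $\abs{\phi^{-1}(u)} = l/\nu_N(u)$.
\item Every $\td u \in \phi^{-1}(u)$ has weight $\wt_M(\td u) = \nu_N(u)\,\wt_N(u)$. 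This follows from the local model $z \mapsto z^{t}$: the edges around $\td u$ are unramified and map $t$-to-one onto the edges around $u$.
\item Summing over the elements of $N$ with label $(w_{ij}, t_{ij})$ shows that $M$ has $\lambda_{ij}\, l/t_{ij}$ elements of weight $w_{ij} t_{ij}$. This is precisely \eqref{eq:sigma_pp_multi}. The edge count $n = lm$ holds because $A$ acts freely on edges.
\end{enumerate}

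\textbf{Main obstacle.} The one step needing care is the weight relation for faces. A face weight is half the number of sides, and that number is counted in the surface $M$. I will verify it via the algebraic model instead. Faces correspond to cycles of $yx$, and the cover commutes with $x$ and $y$. So a cycle of $yx$ of length $w t$ on $E_1$ maps onto a cycle of length $w$ on $E_1/A$, and $t$ is the stabilizer order. This treats vertices and faces uniformly and avoids the geometric subtlety.
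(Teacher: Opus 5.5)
Your proposal is correct and follows the paper's own argument. The regular cyclic map cover gives $O(\sigma) = S_{g_1}/\ZZ_l$, which is by definition the condition $g = \sigma \times l$; the fiber count $\abs{\phi^{-1}(u)} = l/\nu_N(u)$ together with $\wt_M(\td{u}) = \nu_N(u)\,\wt_N(u)$ then yields $\pp = \sigma \times_l \ppsi$ as in \eqref{eq:sigma_pp_multi}, while your $yx$-cycle check of the weight relation and your observation that every cone point lies at a vertex or face centre just supply details the paper asserts without proof in Section \ref{sec:map_cover}.
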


%\bigskip

\begin{lemma} \label{lem:symmetric_map_to_orbifold_map}
    Given an unlabeled passport $\pp = (g, n; \pi_1, \pi_2, \pi_3)$, positive integers $l, m$ satisfying $lm = n$, let $\phi_{l, m}$ be a permutation with cycle type $(l^m)$.
    
    Take any totally numbered map $M_N \in \fix(\pp; \phi_{l, m})$, i.e., the conjugation action of $\phi_{l, m}$ leaves the map unchanged $\phi_{l, m}(M_N) = M_N$.
    One can always obtain an orbifold symbol $\sigma$, a passport $\ppsi$, and a totally numbered map $N_N \in \Mc_N(\ppsi)$, such that $g = \sigma \times l$, $\pp = \sigma \times_l \ppsi$.
    After removing the numbering, $N = M / \langle \phi_{l, m} \rangle$ is actually the quotient map. 
\end{lemma}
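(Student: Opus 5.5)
The proof follows the route already prepared in Sections \ref{sec:map_cover} and \ref{sec:regular_cyclic_cover}: pass from the symmetric totally numbered map to its unnumbered version, take the quotient by the cyclic group generated by $\phi_{l,m}$, and read off $\sigma$ and $\ppsi$ from Lemma \ref{lem:passport_multi_when_cyclic_regular_cover}. The only genuinely new ingredient is the numbering of the quotient map $N_N$, which has to be constructed from the numbering of $M_N$ and the cycle structure of $\phi_{l,m}$.

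\textbf{Step 1: $\phi_{l,m}$ is an automorphism generating a copy of $\ZZ_l$.} Write $M_N = (E, x, y)$. The invariance $x^{\phi_{l,m}} = x$ and $y^{\phi_{l,m}} = y$ says that $\phi_{l,m}$ commutes with $x$ and $y$, hence lies in $C_{\Sf_E}(G) \cong \Aut(M)$, where $M$ is the unnumbered version of $M_N$. Since $\phi_{l,m}$ has cycle type $(l^m)$, its order is $l$. So $A := \langle \phi_{l,m} \rangle \cong \ZZ_l \leq \Aut(M)$, and $A$ acts freely on $E$ with $m$ orbits, namely the cycles $\alpha_1, \ldots, \alpha_m$ of $\phi_{l,m}$.

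\textbf{Step 2: the quotient, $\sigma$, and $\ppsi$.} Form the regular cover $\phi_A : M \to N := M/A$. Algebraically, $N = (E/A, \bar x, \bar y)$, where $\bar x$ and $\bar y$ are well defined because $x$ and $y$ commute with $A$. Transitivity of $\langle x, y \rangle$ on $E$ descends to transitivity of $\langle \bar x, \bar y\rangle$ on $E/A$, so $N$ is an algebraic map with $m$ edges. Geometrically, via Proposition \ref{prop:map_and_algebraic_map} and Section \ref{sec:map_cover}, $N$ lies on the orbifold $O(\sigma) = S_g / A$ with $\sigma = (g_2; \range \nu_N)$. Let $\ppsi$ be the passport of $N$ labeled by $\sigma$ (Definition \ref{def:orbifold_map_passport}). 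Lemma \ref{lem:passport_multi_when_cyclic_regular_cover} then gives directly $g = \sigma \times l$ and $\pp = \sigma \times_l \ppsi$.

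\textbf{Step 3: numbering $N$.} Number the edge set $E/A = \{\alpha_1, \ldots, \alpha_m\}$ by the index of the corresponding cycle of $\phi_{l,m}$. This is canonical once $\phi_{l,m}$ is fixed, which gives $N_N \in \Mc_N(\ppsi)$; forgetting the numbering recovers the quotient $N = M/\langle \phi_{l,m}\rangle$.

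The main obstacle is the well-definedness of $\ppsi$ as a passport labeled by $\sigma$ in Step 2. One must check that the cone point order of each vertex or face of $N$ equals $\wt_M(\td u)/\wt_N(u)$, that at most one cone point lies in each face, and that no cone points lie on edges. I would handle this combinatorially rather than through Riemann-surface language. A cycle $c$ of $x$ (a black vertex of $M$) is mapped by $A$ to another cycle of $x$. If $c$ has stabilizer $A_c$ of order $t$, then $c$ is a union of $t$ free $A_c$-orbits and projects onto a cycle of $\bar x$ of length $\ell(c)/t$. Hence $|\phi^{-1}(u)| = l/t$ and $\nu_N(u) = t = \wt_M(\td u)/\wt_N(u)$, and the same argument applies to $y$ and $yx$. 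Freeness of the action on $E$ gives the absence of cone points on edges, and the fact that each face of $N$ is the image of a single cycle of $yx$ gives at most one cone point per face. With this in hand, Lemma \ref{lem:passport_multi_when_cyclic_regular_cover} applies verbatim.
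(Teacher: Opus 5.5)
Your proposal is correct and takes the same route as the paper. You realize $\langle\phi_{l,m}\rangle\cong\ZZ_l$ inside $\Aut(M)$, take the quotient and read off $\sigma,\ppsi$ from Lemma \ref{lem:passport_multi_when_cyclic_regular_cover}, then number each edge of $N$ by the index $i$ of the cycle $(e_{i,0},\ldots,e_{i,l-1})$ of $\phi_{l,m}$ and induce $\ol{x},\ol{y}$; your combinatorial check of the cone point orders fills in what the paper only calls ``easy to prove'' in Section \ref{sec:map_cover}. One small slip in that check: a black vertex $c$ whose stabilizer $A_c$ has order $t$ is a union of $\ell(c)/t$ free $A_c$-orbits of size $t$, not $t$ of them, and that count is what gives the image cycle its length $\ell(c)/t$.
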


\begin{proof}
According to Lemma \ref{lem:passport_multi_when_cyclic_regular_cover} and the fact that $\langle \phi_{l, m} \rangle \cong \ZZ_l \leq \Aut(M)$ is a cyclic subgroup of the automorphism group of $M$, we can directly provide the orbifold symbol $\sigma$, the passport $\ppsi$, and the quotient map $N = M / \langle \phi_{l, m} \rangle$.
It only remains to give $N$ a total numbering.

We may assume that the edge set $E$ of $M_N = (E, x, y)$ and $\phi_{l, m}$ have the form
\begin{equation}
\begin{split}
    E &= \defsetsmall{e_{i, a}}{1 \leq i \leq m, a \in \ZZ_l} \ , \\ 
    \phi_{l, m} &= (e_{1, 0}, e_{1, 1}, \ldots, e_{1, l-1}) \cdots (e_{m, 0}, e_{m, 1}, \ldots, e_{m, l-1}) \ .
\end{split} 
\end{equation}
Here we denote $\ZZ_l = \{0, 1, \ldots, l-1\}$.
Then the orbits in the edge set $\ol{E} = E / \langle \phi_{l, m} \rangle$ of map $N$ are of the form $\ole_i = \{\ole_{i, 0}, \ole_{1, 1}, \ldots, \ole_{i, l-1}\}$.
This naturally gives each edge $\ole_i$ in $\ol{E}$ the label $i$.
Finally, the permutations $\ol{x}, \ol{y}$ in $N_N = (\ol{E}, \ol{x}, \ol{y})$ can be obtained via the induced homomorphism between the cartographic groups.
This completes the determination of the totally numbered map $N_N$.
\end{proof}

\bigskip
%%%%% section %%%%%
\section{Enumeration of Unrooted Maps} \label{ch:unrooted_map_enum}

This section will combine the previous two sections to finally prove the main theorems \ref{thm:CFF_general_passport_formula} and \ref{thm:unrooted_QU}.

Lemma \ref{lem:unrooted_from_fix_enum} shows that the enumeration of unrooted maps can be transformed into the enumeration of a class of symmetric maps.
Lemma \ref{lem:symmetric_map_to_orbifold_map} in Section \ref{ch:map_cover_and_orbifold_map} shows that this class of symmetric maps is related to maps on orbifolds.
This section provides Lemma \ref{lem:orbifold_map_to_symmetric_map}, showing that given a map on an orbifold and some supplementary data, one can conversely determine the symmetric map.
Finally, we obtain the conversion from the enumeration of symmetric maps to the enumeration of maps on orbifolds (Theorem \ref{thm:fix_enum}).
This proves that the enumeration of unrooted maps can be converted into the enumeration of rooted maps on orbifolds, and the latter can be fully obtained by the main theorem \ref{thm:CFF_general_passport_formula}.

Section \ref{sec:map_universal_cover} of this section first provides preliminary knowledge on universal covers of maps.
Then we prove Lemma \ref{lem:orbifold_map_to_symmetric_map} and Theorem \ref{thm:fix_enum} in order.

%\bigskip
%%%%% subsection %%%%%
\subsection{Universal Cover of Maps} \label{sec:map_universal_cover}

First, we state the following fact: almost all orbifolds have a universal cover.
In particular, the orbifolds we study are of the form $O(\sigma) = S_g / A$ as a quotient space, and $S_g$ has a regular cover, hence $O(\sigma)$ also has a regular cover.

\begin{proposition}[Koebe \cite{Won71}] \label{prop:orbifold_universal_cover}
    Any 2-dimensional orbifold $O(\sigma)$ has a universal covering space $X$, except for $O(0; t)$ and $O(0; t_1, t_2), t_1 \neq t_2$.
    Moreover, the sign of the \textbf{Euler characteristic} of the orbifold signature
    $$
    \chi(\sigma) := 2 - 2g - \sum_{i=1}^r \left( 1 - \frac1{t_i} \right)
    $$
    determines the form of its universal covering space $X$:
    if $\chi(\sigma) > 0$ then $X$ is sphere, otherwise $X$ is plane.
    \qed
\end{proposition}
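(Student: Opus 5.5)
The statement is Proposition \ref{prop:orbifold_universal_cover}, a classical fact cited from Koebe. Since it is quoted rather than proved in the paper, the plan is to recover it from standard uniformization and covering theory of 2-orbifolds. We only need the cases that actually arise, namely orbifolds that are global quotients $O(\sigma) = S_g/A$ by a finite group. I would still sketch the general argument and then isolate the exceptions.

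\textbf{Existence for good orbifolds.} The first step is to show that every good orbifold has a universal orbifold cover. Here \emph{good} means covered by a manifold. I would construct this cover from the orbifold fundamental group
\[
\Gamma(\sigma) = \langle a_1,b_1,\dots,a_g,b_g,c_1,\dots,c_r \mid c_i^{t_i}=1,\ \textstyle\prod [a_j,b_j]\prod c_i = 1\rangle .
\]
The construction runs as follows:
\begin{itemize}
\item Remove the cone points $R$ and take the ordinary universal cover of $S_g\setminus R$.
\item Quotient by the normal closure of the $c_i^{t_i}$.
\item Fill in the punctures: near a cone point of order $t_i$, each lift is a punctured disc mapped by $z\mapsto z^{t_i}$ after the quotient, so it can be completed by adding one point.
\end{itemize}
When $O(\sigma)=S_{g_1}/A$ is a global quotient, this is immediate. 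The universal cover of $S_{g_1}$ (sphere, plane or disc) is already an orbifold cover of $O(\sigma)$, and it is simply connected, so it is universal.

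\textbf{Identifying $X$ by $\chi(\sigma)$.} Next I would lift a metric of constant curvature. The Riemann--Hurwitz relation $\chi(S_{g_1}) = l\,\chi(\sigma)$ (cf.\ Theorem \ref{thm:g_l_sigma_admissible}(2)) ties the sign of $\chi(\sigma)$ to the sign of $\chi$ of any finite manifold cover. If $\chi(\sigma)>0$, the cover has positive Euler characteristic, so it is a sphere, and $X=S^2$. If $\chi(\sigma)\le 0$, the cover has genus at least $1$, and its universal cover is the plane, either Euclidean or hyperbolic (the disc is homeomorphic to the plane). For a general good orbifold I would argue with Gauss--Bonnet for orbifolds: a constant-curvature cone metric with angles $2\pi/t_i$ exists and has curvature of the same sign as $\chi(\sigma)$, and its developing map identifies $X$.

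\textbf{Exceptions and the main obstacle.} Finally I would show that $O(0;t)$ with $t>1$ and $O(0;t_1,t_2)$ with $t_1\ne t_2$ are bad. Suppose a manifold cover existed. Restricting it to $S^2\setminus R$ gives a covering of a disc or an annulus. Its fibre structure forces all local degrees over the single puncture to be equal, so $t=1$ in the first case. In the annulus case the covering is cyclic, so the local degrees over the two ends agree, giving $t_1=t_2$. Conversely, every other orbifold with $\chi(\sigma)>0$ is spherical, namely $(0;t,t)$ or $(0;2,2,t)$, $(0;2,3,3)$, $(0;2,3,4)$, $(0;2,3,5)$, and these are quotients of $S^2$ by finite rotation groups. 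The main obstacle is this last good/bad classification in the positive-curvature case, which needs the explicit list above. Since the paper only applies the proposition to quotients $S_g/\ZZ_l$, where goodness is automatic, I would just cite it there.
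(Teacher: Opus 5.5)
The paper does not prove this proposition: it quotes it from Koebe and Wong and closes it with \qed. Its only added remark is that the orbifolds it uses are global quotients $S_g/A$. Your proposal is therefore a genuinely different route, in that it actually gives an argument.

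Your global-quotient paragraph is complete, and it is exactly what the paper needs. The proposition is only applied to $O(\sigma)=S_{g_1}/A$ with $A$ finite, in the end $A\cong\ZZ_l$. Composing the universal cover of $S_{g_1}$ with $S_{g_1}\to S_{g_1}/A$ gives a simply connected manifold orbifold cover. The relation $\chi(S_{g_1})=l\,\chi(\sigma)$ then makes the sign criterion transparent. What the citation buys over this is the statement for arbitrary orbifolds. Your general-case sketch does not really prove that; it reduces it to inputs of the same depth. These are the existence of constant-curvature cone metrics with angles $2\pi/t_i$ when $\chi(\sigma)\le 0$ (essentially Poincar\'e's polygon theorem) and the classification of the spherical orbifolds.

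Three spots in that sketch should be tightened.

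\begin{enumerate}
\item \textbf{The filling step.} A lift of a punctured disc around $p_i$ maps by $z\mapsto z^{k}$, where $k$ is the order of $c_i$ in $\Gamma(\sigma)$, and you need $k=t_i$. This is precisely where goodness enters. If $Y\to O(\sigma)$ is a manifold cover, every conjugate of $c_i^{t_i}$ lifts to a closed loop in the complement of the preimage of the cone points. So the normal closure of the $c_i^{t_i}$ lies in the subgroup of that cover, and $Y$'s local degree $t_i$ divides $k$, which in turn divides $t_i$. The step genuinely fails for the exceptions: in $\Gamma(0;t)$ one has $c_1=1$, and in $\Gamma(0;t_1,t_2)$ the order of $c_1$ is $\gcd(t_1,t_2)$.
\item \textbf{The case $O(0;t)$.} The reason $t=1$ is that $S^2\setminus\{p\}$ is simply connected. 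Hence the restricted covering is trivial and every local degree over $p$ is $1$. Equality of local degrees by itself gives nothing.
\item \textbf{The annulus case.} You should note that a component of infinite degree cannot occur. Over a punctured neighbourhood of a cone point, an orbifold cover consists only of punctured discs of degree $t_i$. Only then does cyclicity force $t_1=t_2$.
\end{enumerate}
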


The fundamental group $\pi_1(O(\sigma))$ of the orbifold can be defined as the group of deck transformations of the universal cover $X \to O(\sigma)$.
It is known that the group presentation of $\pi_1(O(\sigma))$ is the same as that of the Fuchsian group $\Gamma(\sigma)$, i.e., $\pi_1(O(\sigma)) \cong \Gamma(\sigma)$, where
\begin{equation}
    \Gamma(\sigma) = \left\langle
    \{a_i, b_i\}_{i=1}^g, \{x_i\}_{i=1}^r \ \left|\ 
    \prod_{i=1}^g [a_i, b_i] \prod_{i=1}^r x_i = x_1^{t_1} = \cdots = x_r^{t_r} = \one \right.\right\rangle \ .
\end{equation}

%\bigskip
%\bigskip

First, given an unlabeled map $M \in \Mc(\pp)$, $\pp = (g_1, n; \pi_1, \pi_2, \pi_3)$.
Also given a subgroup $A \leq \Aut(M)$ of the automorphism group and a regular cover $\psi : M \to N := M / A$, this induces the cone point orders $\nu_N$ of the map $N$, and yields the orbifold signature $\sigma := (g_2; \range \nu_N)$ of the orbifold on which $N$ lies.

By Proposition \ref{prop:orbifold_universal_cover}, we can provide the universal covering space $X$ common to the orbifold $O(\sigma)$ and $S_g$,
as the left of the commutative diagram \eqref{eq:cd_universal_cover}.

\begin{equation} \label{eq:cd_universal_cover}
\begin{tikzcd}
    S_g \arrow[d, "\psi"'] & X \arrow[l, "\zeta"'] \arrow[dl, "\eta"] \\
    O(\sigma) &
\end{tikzcd}
\qquad\qquad
\begin{tikzcd}
    M \arrow[d, "\psi"'] & \td{N} \arrow[l, "\zeta"'] \arrow[dl, "\eta"] \\
    N &
\end{tikzcd}
\end{equation}

The regular cover $\psi : S_g \to O(\sigma) = S_g / A$ gives
a normal group embedding $\psi_* : \pi_1(S_g) \hookrightarrow \pi_1(O(\sigma))$ with $\psi_* \pi_1(S_g) \trianglelefteq \pi_1(O(\sigma))$, and
$A \cong \pi_1(O(\sigma)) \ /\ \psi_* \pi_1(S_g)$, i.e., $A \cong \Gamma(\sigma) \ /\ \pi_1(S_g)$.
Equivalently, this yields an \textbf{order-preserving} epimorphism $\zeta_A : \Gamma(\sigma) \to A$, where order-preserving means mapping finite-order elements to elements of the same order, i.e., requiring that $\ker \zeta_A$ is torsion-free.
Here $\ker \zeta_A = \pi_1(S_g)$ is torsion-free. 
Thus $\zeta_A$ is order-preserving.

Let $\Epi_0(G, H)$ denote the set of all order-preserving epimorphisms from group $G$ to group $H$. In the preceding discussion we obtained $\zeta_A \in \Epi_0(\Gamma(\sigma), A)$.

\bigskip

We pull back the map $N$ to the universal cover $X$, and the resulting map $\td{N} := \eta^{-1}(N)$ is called the \textbf{universal covering map} of map $N$.
Since $\eta = \psi \circ \zeta$, the universal covering map of $M$ is also $\td{N} = \zeta^{-1}(M)$, as the right of the commutative diagram \eqref{eq:cd_universal_cover}.
Thus $N = \td{N} / \Gamma(\sigma)$ and $M = \td{N} / \pi_1(S_g)$.

Here the order-preserving epimorphism $\zeta_A : \Gamma(\sigma) \to A$ yields the action of $A \leq \Aut(M)$ on the edge set $E$.
That is, for $e \in E$ and $\tde \in \zeta^{-1}(e)$, and any $b \in \Gamma(\sigma)$,
$\zeta(b \cdot \tde) = \zeta_A(b) \cdot e$.

\begin{proposition}[Mednykh-Nedela \cite{MedNed06}] \label{prop:epi_0_zetaA}
    Given an unlabeled map $M \in \Mc(\pp)$ and a subgroup $A \leq \Aut(M)$ of the automorphism group, one can define the regular covering map $N = M / A$ on orbifold $O(\sigma)$, and give the cone point orders $\nu_N$ at vertices and faces.
    
    Moreover, via the universal cover $\eta : X \to O(\sigma)$ of the orbifold $O(\sigma)$, one can define the universal covering map $\td{N} := \eta^{-1}(N)$ of the map $N$.
    Furthermore, $\td{N}$ is also the universal covering map of $M$ via $\zeta : \td{N} \to M$.
    There is also an order-preserving epimorphism $\zeta_A \in \Epi_0(\Gamma(\sigma), A)$.
    For $b \in \Gamma(\sigma)$ and an edge $\tde$ of the universal covering map $\td{N}$, $\zeta(b \cdot \tde) = \zeta_A(b) \cdot \zeta(\tde)$.
    \qed
\end{proposition}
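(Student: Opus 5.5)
The statement to prove is Proposition \ref{prop:epi_0_zetaA}. I would assemble it from standard orbifold covering theory, in the order in which its claims are stated.

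First, since $A \leq \Aut(M) \leq \Aut(S_{g_1})$ is a finite group of orientation-preserving automorphisms of the compact Riemann surface $S_{g_1}$, the quotient $\psi : S_{g_1} \to S_{g_1}/A$ is a regular branched cover. Its branch points are isolated with cyclic stabilizers. By Section \ref{sec:map_cover}, edges are not ramified, and each face contains at most one ramification point. Hence the quotient $N = M/A$ is a map on the orbifold $O(\sigma)$, whose cone point orders $\nu_N(u) = |A|/|\psi^{-1}(u)|$ coincide with the orders of the stabilizers. This gives the first claim; the work is only to record that the stabilizer of a point in $\psi^{-1}(u)$ is cyclic of order $\nu_N(u)$.

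Second, $O(\sigma)$ is a good orbifold because it is covered by the manifold $S_{g_1}$. So Proposition \ref{prop:orbifold_universal_cover} applies, and we get a universal cover $\eta : X \to O(\sigma)$ with deck group $\Gamma(\sigma)$. The universal cover $\zeta : X \to S_{g_1}$ (or $\zeta$ an isomorphism when $X$ is the sphere) composes to give an orbifold cover of $O(\sigma)$. By the universal property, it factors $\eta = \psi \circ \zeta$ after adjusting base points. Pulling back the embedded graph then gives $\td{N} := \eta^{-1}(N) = \zeta^{-1}(\psi^{-1}(N)) = \zeta^{-1}(M)$. The lifted object is a (possibly infinite) map, because $\eta$ is a local homeomorphism away from cone points and cone points lie only at vertices or inside faces.

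Third, I construct $\zeta_A$. The subgroup $K := \pi_1(S_{g_1})$, realized as the deck group of $\zeta$, sits inside $\Gamma(\sigma)$ as the deck group of $\eta$ fixing the fibres of $\zeta$. Regularity of $\psi$ implies that $K$ is normal with $\Gamma(\sigma)/K \cong A$: every element of $A$ lifts to a deck transformation of $\eta$, and the lifts of the identity are exactly $K$. Define $\zeta_A$ as the resulting quotient map. It is surjective by construction. Its kernel $K$ acts freely on $X$, so $K$ is torsion-free: a finite-order element of $\Gamma(\sigma)$ has a fixed point in $X$, because elliptic elements fix the lifts of cone points. Hence $\zeta_A$ preserves the orders of the torsion elements, which are exactly the conjugates of powers of the $x_i$, and so $\zeta_A \in \Epi_0(\Gamma(\sigma), A)$. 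The equivariance $\zeta(b\cdot\tde) = \zeta_A(b)\cdot\zeta(\tde)$ is the defining relation of the lift. For $b \in \Gamma(\sigma)$, the map $\zeta\circ b$ is a lift of $\psi$ through $\zeta$, so it equals $a\circ\zeta$ for a unique $a \in A$, and this $a$ is $\zeta_A(b)$. Restricting to edges gives the formula.

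The main obstacle I expect is making the lifting argument rigorous for orbifold covers, namely that every element of $A$ lifts to $X$ and that the map $b \mapsto a$ is a well-defined homomorphism. I would handle this by removing the cone points and their preimages, applying ordinary covering theory to the punctured surfaces, where $\pi_1$ of the punctured orbifold surjects onto $\Gamma(\sigma)$, and then checking compatibility with the relations $x_i^{t_i} = 1$. The case where $X$ is the sphere also needs separate care; there $\zeta$ is the identity and $K$ is trivial.
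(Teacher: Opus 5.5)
Your proposal is correct and takes the same route as the paper's discussion preceding the proposition. Both factor $\eta = \psi\circ\zeta$ to get $\td{N} = \eta^{-1}(N) = \zeta^{-1}(M)$, use regularity of $\psi$ to identify $A \cong \Gamma(\sigma)/\pi_1(S_g)$, take $\zeta_A$ to be the quotient map, and get order-preservation from the torsion-freeness of the kernel $\pi_1(S_g)$ (your goodness-of-$O(\sigma)$ remark, fixed-point argument and lifting argument only fill in steps the paper states without proof).
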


\bigskip
%%%%% subsection %%%%%
\subsection{From maps on orbifolds to symmetric maps} \label{sec:from_orbicyclic_map_to_symmetric_map}

In this section we prove the following lemma that a map on orbifold $N$ and additional data yields a corresponding symmetric map $M$.
The proof is mainly algebraic, but its motivation comes from the following geometric intuition.

The orbifold $O(\sigma)$ embedding map $N$ can be lifted to the universal covering map $\td{N}$ on the universal cover $X$.
Conversely, taking the fundamental domain $\ol{D}$ of $O(\sigma)$ in $X$, the restriction of $\td{N}$ to $\ol{D}$ gives the map $N$.
If we take a larger fundamental domain $D$, which is the fundamental domain of $S_g$ in $X$, the restriction of $\td{N}$ to $D$ gives the map $M$.
Thus the additional data mainly serve to determine the shape of the fundamental domain $D \supset \ol{D}$ for $M$ and the numbering of the edges of $M$.

\begin{lemma} \label{lem:orbifold_map_to_symmetric_map}
Given an unlabeled passport $\pp = (g_1, n; \pi_1, \pi_2, \pi_3)$, positive integers $l, m$ satisfying $lm = n$, and a permutation $\phi_{l, m}$ of cycle type $(l^m)$.
Then every choice of an orbifold symbol $\sigma$ satisfying $g = \sigma \times l$, a passport $\ppsi$ satisfying $\pp = \sigma \times_l \ppsi$, a totally numbered map on orbifold $N_N \in \Mc_N(\ppsi)$, an order-preserving epimorphism $\zeta_A \in \Epi_0(\Gamma(\sigma), \ZZ_l)$, and $(m-1)$ cyclic group elements $(a(i))_{i=2}^m \in \ZZ_l^{m-1}$, determines a totally numbered map $M_N \in \fix(\pp; \phi_{l, m})$, satisfying their unnumbered versions $M / \ZZ_l = N$.

In the language of mappings, there exist
\begin{equation} \label{eq:to_fix}
    \bigsqcup_{\sigma} \bigsqcup_{\pp = \sigma \times_l \ppsi} \Mc_N(\ppsi) \times \Epi_0(\Gamma(\sigma), \ZZ_l) \times \ZZ_l^{m-1}
    \quad \to \quad
    \fix(\pp; \phi_{l, m}) \ . 
\end{equation}
\end{lemma}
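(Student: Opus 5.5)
We will build $M_N$ purely algebraically, as an $l$-fold ``voltage'' lift of $N_N$, following the geometric picture above. Write $N_N = (\ol{E}, \ol{x}, \ol{y})$ with $\ol{E} = \{\ole_1, \ldots, \ole_m\}$. Take $E = \{e_{i,a} \mid 1 \le i \le m,\ a \in \ZZ_l\}$ and $\phi_{l,m}(e_{i,a}) = e_{i,a+1}$, as in the proof of Lemma \ref{lem:symmetric_map_to_orbifold_map}. Any permutation of $E$ commuting with $\phi_{l,m}$ and covering a permutation $\ol{p}$ of $\ol{E}$ has the form
\[
e_{i,a} \mapsto e_{\ol{p}(i),\, a + c_p(i)}
\]
for a ``voltage'' function $c_p : [m] \to \ZZ_l$. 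So it suffices to produce voltages $c_x, c_y$ from the data $(\zeta_A, (a(i))_{i=2}^m)$ and set $x(e_{i,a}) = e_{\ol{x}(i), a + c_x(i)}$, and similarly for $y$. By construction $x^{\phi} = x$ and $y^{\phi} = y$, and the projection $e_{i,a} \mapsto \ole_i$ is a cover onto $N_N$ with deck group $\langle \phi_{l,m} \rangle \cong \ZZ_l$.

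To get the voltages we use the universal covering map $\td{N}$ of Proposition \ref{prop:epi_0_zetaA}. First choose a spanning tree (or, equivalently, the fundamental domain $\ol{D}$) of the dual structure of $N$ through the edge $\ole_1$, and fix a lift $\tde_1$ of $\ole_1$ in $\td{N}$. Then lift every $\ole_i$ to $\tde_i$ inside $\ol{D}$. The local rotations $\ol{x}, \ol{y}$ lift to rotations $\td{x}, \td{y}$ on $\td{N}$, so $\td{x}(\tde_i) = b_i \cdot \tde_{\ol{x}(i)}$ for a unique $b_i \in \Gamma(\sigma)$, and likewise $\td{y}(\tde_i) = b'_i \cdot \tde_{\ol{y}(i)}$. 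We define $c_x(i) := \zeta_A(b_i) + a(\ol{x}(i)) - a(i)$, with $a(1) := 0$, and similarly for $c_y$. Here the $a(i)$ record which lift of $\ole_i$ receives the number $e_{i,0}$, i.e.\ a gauge change, and $\zeta_A$ pushes the monodromy down to $\ZZ_l$. Concretely, $M_N$ is $\td{N}/\ker\zeta_A$ with edges numbered by $e_{i,a} \leftrightarrow \zeta_A^{-1}(a + a(i))\cdot\tde_i$.

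The remaining verifications are the following.
\begin{enumerate}
\item Transitivity of $\langle x, y \rangle$ on $E$. It follows from the surjectivity of $\zeta_A$ together with the fact that $\Gamma(\sigma)$ is generated by the monodromies $b_i, b'_i$ around the cycles of $N$, since $N$ is connected.
\item The passport is $\pp$. A vertex or face cycle of $N$ of length $w$ with cone order $t$ corresponds to an elliptic generator $x_j$ of order $t$. Because $\zeta_A$ is order-preserving, $\zeta_A(x_j)$ has order exactly $t$, so the lifted cycle has length $wt$ and there are $l/t$ of them. This gives exactly $\sigma \times_l \ppsi = \pp$.
\item The genus is $g$. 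This follows from Euler's formula and the Riemann--Hurwitz condition $g = \sigma \times l$ of Theorem \ref{thm:g_l_sigma_admissible}.
\item $M / \ZZ_l = N$. This holds because the quotient of $E$ by $\phi_{l,m}$ recovers $(\ol{E}, \ol{x}, \ol{y})$.
\end{enumerate}

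The main obstacle is step 2, together with making the voltage assignment well defined. Two things must be shown. First, the product of voltages around each cycle of $\ol{x}$, $\ol{y}$, $\ol{y}\ol{x}$ equals $\zeta_A$ of the corresponding elliptic or trivial element of $\Gamma(\sigma)$, independently of the gauge $a(i)$, since gauge terms telescope around cycles. Second, faces carrying no cone point get trivial monodromy, while faces carrying a cone point get the generator $x_j$. We plan to handle this by identifying $\Gamma(\sigma)$ with the orbifold fundamental group via the standard presentation, read off a one-face (tree-cut) decomposition of $N$ so that the relation $\prod [a_i,b_i]\prod x_i = 1$ appears as the boundary of $\ol{D}$. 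If this bookkeeping proves messy, we will instead argue geometrically: define $M := \td{N}/\ker\zeta_A$ directly, which is a map on $X/\ker\zeta_A \cong S_g$ since $\ker\zeta_A$ is torsion-free. Then use the voltages only to transfer the numbering.
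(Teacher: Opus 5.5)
Your proposal is correct and follows essentially the paper's own argument. The paper likewise encodes the edges of $\td{N}$ as $\tde_{i,b}$ with $b\in\Gamma(\sigma)$, fixes the gauge by $\zeta(\tde_{i,\id})=e_{i,a(i)}$ with $a(1)=\id$, and arrives at exactly your voltage formula $x\cdot e_{i,a}=e_{\ol{x}(i),\,a\,a(i)^{-1}\zeta_A(b_1(i))\,a(\ol{x}(i))}$ (similarly for $y$); your checks of transitivity, of the passport via order-preservation of $\zeta_A$ on elliptic elements, and of the genus, together with the explicit realization $M=\td{N}/\ker\zeta_A$, supply details the paper leaves implicit.

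The only slip is cosmetic. To match your voltages $c_x(i)=\zeta_A(b_i)+a(\ol{x}(i))-a(i)$, the numbering should be $e_{i,a}\leftrightarrow\zeta_A^{-1}(a-a(i))\cdot\tde_i$ rather than $\zeta_A^{-1}(a+a(i))\cdot\tde_i$. This is just the reparametrization $a\mapsto -a$ of the gauge and does not affect the lemma.
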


\begin{proof}
According to the conditions of the lemma, we are given $\sigma, \ppsi, N_N \in \Mc_N(\ppsi), \zeta_A \in \Epi_0(\Gamma(\sigma), \ZZ_l)$ and $(a(i))_{i=2}^m \in \ZZ_l^{m-1}$.
We now need to obtain $M_N \in \fix(\pp; \phi_{l, m})$ such that the unnumbered version satisfies $M / \ZZ_l = N$.
If we write $M_N = (E, x, y)$ as a totally numbered algebraic map, we ultimately need to determine $x, y$.
According to the discussion preceding Proposition \ref{prop:epi_0_zetaA}, we can obtain the universal cover $\td{N}$ of $N$, which is also the universal cover of $M$, specifically described by the commutative diagram \eqref{eq:cd_universal_cover}
\begin{equation*}
    \psi : M \to N \ ,\ \zeta : \td{N} \to M \ ,\ \eta : \td{N} \to N \ ,\ \eta = \psi \circ \zeta \ .
\end{equation*}

We now consider these three maps $N, M, \td{N}$ from algebraic maps perspective.

Let $N_N = (\ol{E}, \ol{x}, \ol{y})$, where $\ol{E} = \{\ole_1, \ldots, \ole_m\}$.
Since the pullback map $\td{N} = \eta^{-1}(N)$ is known, we can write $\td{N} = (\td{E}, \td{x}, \td{y})$, where $\td{x}, \td{y}$ are not unique but chosen from the equivalence classes.
Because the $\ole_i$ are pairwise distinct, the fibres $\eta^{-1}(\ole_i)$ are identifiable, and $\Gamma(\sigma)$ acts regularly on the fibres, yielding a $\Gamma(\sigma)$-equivariant encoding on $\td{E}$:
\begin{equation*}
    \td{E} = \defsetsmall{\tde_{i, b}}{1 \leq i \leq m, b \in \Gamma(\sigma)} \ ,\ 
    \eta^{-1}(\ole_i) = \{\tde_{i, b}\}_{b \in \Gamma(\sigma)} \ ,\ 
    b' \cdot \tde_{i, b} = \tde_{i, b' \cdot b} \ .
\end{equation*}

Similarly, we may assume that the edge set of $M = (E, x, y)$ has the form
\begin{equation*}
    E = \defsetsmall{e_{i, a}}{1 \leq i \leq m, a \in \ZZ_l} \ ,\ 
    \psi^{-1}(\ole_i) = \{e_{i, a}\}_{a \in \ZZ_l} \ ,\ 
    a' \cdot e_{i, a} = e_{i, a' \cdot a} \ .
\end{equation*}
We always assume $\phi_{l, m} = (e_{1, 0}, e_{1, 1}, \ldots, e_{1, l-1}) \cdots (e_{m, 0}, e_{m, 1}, \ldots, e_{m, l-1})$. 

\bigskip

Furthermore, we can choose the base point of $O(\sigma)$ on the edge $\ole_1$ and require that the base point of its regular cover $S_g$ lies on the edge $e_{1, \id}$, and the base point of the universal cover $X$ lies on the edge $\tde_{1, \id}$.
This effectively fixes a root edge $e_{1, \id}$ for $\td{N}$.
We now know that $\zeta(\tde_{1, \id}) = e_{1, \id}$.
However, for other indices $i$, it is not generally true that $\zeta(\tde_{i, \id}) = e_{i, \id}$.
Thus we set
\begin{equation*}
    \zeta(\tde_{i, \id}) = e_{i, a(i)} \ ,\ a(1) = \id \ .
\end{equation*}
Here for $2 \leq i \leq m$, $a(i)$ is the sequence of elements chosen in the lemma condition.

\bigskip

By Proposition \ref{prop:epi_0_zetaA}, for any $b \in \Gamma(\sigma)$ we have
\begin{equation*}
      \zeta(\tde_{i, b})
    = \zeta(b \cdot \tde_{i, \id})
    = \zeta_A(b) \cdot e_{i, a(i)}
    = e_{i, \zeta_A(b) \cdot a(i)} \ .
\end{equation*}

Write $\td{x} \cdot \tde_{i, \id} = \tde_{\ol{x}(i), b_1(i)}$, where $b_1(i)$ is a function determined by $\td{x}$.
Similarly, write $\td{y} \cdot \tde_{i, \id} = \tde_{\ol{y}(i), b_2(i)}$.
Here $b_k(i), k = 1,2$ are determined by $\td{x}, \td{y}$.

Thus, for any $1 \leq i \leq m$ and $a \in \ZZ_l$, we have
\begin{equation*}\begin{split}
      x \cdot e_{i, a}
    &= x (a a(i)^{-1}) \cdot e_{i, a(i)}
    = (a a(i)^{-1}) x \cdot e_{i, a(i)} \\
    &= (a a(i)^{-1}) \cdot \zeta\left(\td{x} \cdot \tde_{i, \id}\right) \\
    &= a a(i)^{-1} \cdot \zeta(\tde_{\ol{x}(i), b_1(i)}) \\
    &= a a(i)^{-1} \zeta_A(b_1(i)) \cdot \zeta(\tde_{\ol{x}(i), \id}) \\
    &= a a(i)^{-1} \zeta_A(b_1(i)) \cdot e_{\ol{x}(i), a(\ol{x}(i))} \\
    &= e_{\ol{x}(i), a a(i)^{-1} \zeta_A(b_1(i)) a(\ol{x}(i))} \ .
\end{split}\end{equation*}

Similarly, on the other side we have
$$
y \cdot e_{i, a} = e_{\ol{y}(i), a a(i)^{-1} \zeta_A(b_2(i)) a(\ol{y}(i))} \ .
$$

This determines $x$ and $y$. Thus we completely obtain the totally numbered map $M_N = (E, x, y) \in \fix(\pp; \phi_{l, m})$.
\end{proof}

\bigskip
%%%%% subsection %%%%%
\subsection{Bijective between Maps on Orbifolds and Symmetric Maps} \label{sec:orbifold_map_cong_symmetric_map}

In this section we combine Lemma \ref{lem:symmetric_map_to_orbifold_map} and Lemma \ref{lem:orbifold_map_to_symmetric_map} to establish a bijective correspondence between symmetric maps and orbifold embedding maps.
And finally we can enumerate the symmetric maps.

\begin{theorem} \label{thm:fix_enum}
    Given an unlabeled passport $\pp = (g_1, n; \pi_1, \pi_2, \pi_3)$, $l$ such that $lm = n$, and a permutation $\phi_{l, m}$ of cycle type $(l^m)$, one can prove that the mapping \eqref{eq:to_fix} in Lemma \ref{lem:orbifold_map_to_symmetric_map} is a bijection
    \begin{equation} \label{eq:bij_fix}
        \bigsqcup_{\sigma} \bigsqcup_{\pp = \sigma \times_l \ppsi} \Mc_N(\ppsi) \times \Epi_0(\Gamma(\sigma), \ZZ_l) \times \ZZ_l^{m-1}
    \quad \cong \quad
    \fix(\pp; \phi_{l, m}) \ . 
    \end{equation}
    Consequently we obtain the counting formula for $\fix(\pp; \phi)$:
    \begin{equation}
        \abs{\fix(\pp; \phi)} = \sum_{g = \sigma \times l} \sum_{\pp = \sigma \times_l \ppsi} l^{m-1} \cdot \abs{\Mc_N(\ppsi)} \cdot \abs{\Epi_0(\Gamma(\sigma), \ZZ_l)} \ .
    \end{equation}
\end{theorem}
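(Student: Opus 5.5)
We will show that the map \eqref{eq:to_fix} of Lemma \ref{lem:orbifold_map_to_symmetric_map} is a bijection by constructing its inverse. The starting point is the forward assignment of Lemma \ref{lem:symmetric_map_to_orbifold_map}, enriched with the extra data that Lemma \ref{lem:orbifold_map_to_symmetric_map} consumes.

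\textbf{Inverse map.} Take $M_N = (E, x, y) \in \fix(\pp; \phi_{l,m})$ and set $A = \langle \phi_{l,m} \rangle \cong \ZZ_l$.
\begin{itemize}
\item Lemma \ref{lem:symmetric_map_to_orbifold_map} produces $\sigma$ with $g = \sigma \times l$, $\ppsi$ with $\pp = \sigma \times_l \ppsi$, and the totally numbered quotient $N_N = (\ol{E}, \ol{x}, \ol{y})$. Its numbering is canonical, because the orbits of $\phi_{l,m}$ are the fixed sets $\{e_{i,a}\}_{a}$.
\item Proposition \ref{prop:epi_0_zetaA} gives $\zeta_A \in \Epi_0(\Gamma(\sigma), A)$. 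Since $\phi_{l,m}$ is a fixed generator, we identify $A$ with $\ZZ_l$ by $\phi_{l,m} \mapsto 1$.
\item Put the base point of $X$ on $\tde_{1,\id}$ over $e_{1,0}$. With the $\Gamma(\sigma)$-equivariant encoding $\tde_{i,b}$, define $a(i)$ by $\zeta(\tde_{i,\id}) = e_{i,a(i)}$. This gives $(a(i))_{i=2}^m$.
\end{itemize}

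The substantive checks are that $\zeta_A$ and the $a(i)$ are well defined, i.e.\ independent of choices once the base points are fixed. The encoding of $\td{E}$ is fixed by the regular $\Gamma(\sigma)$-action together with the choice of the lifts $\tde_{i,\id}$. The covering $\zeta$ is fixed by requiring $\zeta(\tde_{1,\id}) = e_{1,0}$.

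\textbf{Compositions are identities.}
\begin{itemize}
\item Orbifold data $\to$ symmetric map $\to$ data: the explicit formulas for $x, y$ in Lemma \ref{lem:orbifold_map_to_symmetric_map} give back $\ol{x}, \ol{y}$ on orbits, the same $\zeta_A$ (read off from the monodromy of the lifts), and the same $a(i)$ by construction.
\item Symmetric map $\to$ data $\to$ symmetric map: the formulas for $x\cdot e_{i,a}$ and $y\cdot e_{i,a}$ are forced, since they were derived only from $\phi_{l,m}$-equivariance and $\zeta(b\cdot \tde) = \zeta_A(b)\cdot\zeta(\tde)$. Any $M_N$ in the fixed set therefore agrees with the reconstruction.
\item Disjointness of the union: distinct $(\sigma, \ppsi)$ give distinct pieces because $\sigma$ and $\ppsi$ are determined by $M_N$, namely by the cone orders $\nu_N$ and the weights.
\end{itemize}

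\textbf{Main obstacle.} The hard part is to show that every $\zeta_A \in \Epi_0(\Gamma(\sigma), \ZZ_l)$ and every choice of $a(i)$ actually occurs, and that distinct choices give distinct $M_N$. The plan is as follows.
\begin{itemize}
\item Surjectivity onto the data: $\ker\zeta_A$ is torsion-free of index $l$, so $X/\ker\zeta_A$ is a surface of genus $g$ by Riemann--Hurwitz. This is the content of $g = \sigma\times l$. Hence the quotient $\td{N}/\ker\zeta_A$ is a genuine map with passport $\sigma \times_l \ppsi = \pp$, on which $\ZZ_l$ acts by $\phi_{l,m}$.
\item Injectivity: changing $a(i)$ changes which sheet $\tde_{i,\id}$ lands in, and so changes $x, y$. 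Different $\zeta_A$ give different monodromy $x\cdot e_{i,a}$.
\end{itemize}
Here one must note carefully that the identification $A \cong \ZZ_l$ is fixed by $\phi_{l,m}$, so no automorphism of $\ZZ_l$ has to be quotiented out.

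\textbf{Counting.} Once the bijection holds, the counting formula follows by taking cardinalities: $\abs{\ZZ_l^{m-1}} = l^{m-1}$, and the sum runs over all admissible $\sigma$ and $\ppsi$.
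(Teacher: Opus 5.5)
Your proposal is correct and is essentially the paper's argument. Your inverse map (reading $\sigma$, $\ppsi$, $N_N$, $\zeta_A$ off $M_N$ via Lemma~\ref{lem:symmetric_map_to_orbifold_map} and Proposition~\ref{prop:epi_0_zetaA}, then defining $a(i)$ by $\zeta(\tde_{i,\id})=e_{i,a(i)}$) is exactly the paper's surjectivity step, and your round-trip claim that $\zeta$ is pinned down by $\zeta(\tde_{1,\id})=e_{1,0}$ is the same content as the paper's injectivity step. The paper makes that step explicit by comparing the formulas for $x,y$ and using transitivity of $\langle \ol{x},\ol{y}\rangle$ on $\ol{E}$ together with $a(1)=a'(1)=\id$. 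Rely on that (or on uniqueness of the based lift) for injectivity, not on the one-liner ``changing $a(i)$ changes the sheet, hence $x,y$''. That one-liner fails on its own, because shifting all $a(i)$, including $a(1)$, by a common element leaves $x,y$ unchanged.
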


\begin{proof}
We adopt the notation from Lemma \ref{lem:orbifold_map_to_symmetric_map}.
We first prove that the map is injective.

Suppose, for contradiction, that at least one of the parameters $\sigma, \ppsi, N_N, \zeta_A, \\ (a(i))_{i=2}^m$ differs but yields the same $M_N \in \fix(\pp; \phi_{l, m})$.
By Lemma \ref{lem:symmetric_map_to_orbifold_map} and Proposition \ref{prop:epi_0_zetaA}, the parameters $\sigma, \ppsi, N_N, \zeta_A$ have already been determined by $M_N$, so only two different sets of group elements $(a(i))_{i=2}^m, (a'(i))_{i=2}^m$ could possibly differ.

From the proof of Lemma \ref{lem:orbifold_map_to_symmetric_map}, the expressions for $x, y$ are as follows:
$$
x \cdot e_{i, a} = e_{\ol{x}(i), a a(i)^{-1} \zeta_A(b_2(i)) a(\ol{x}(i))} \ ,\ 
y \cdot e_{i, a} = e_{\ol{y}(i), a a(i)^{-1} \zeta_A(b_2(i)) a(\ol{y}(i))} \ .
$$
Hence
\begin{equation*}\begin{split}
    &a a(i)^{-1} \zeta_A(b_1(i)) a(\ol{x}(i)) = a {a'(i)}^{-1} \zeta_A(b_1(i)) a'(\ol{x}(i)) \ , \\
    &a a(i)^{-1} \zeta_A(b_2(i)) a(\ol{y}(i)) = a {a'(i)}^{-1} \zeta_A(b_2(i)) a'(\ol{y}(i)) \ .
\end{split}\end{equation*}

Note that our elements lie in the abelian group $\ZZ_l$; the above equations are equivalent to
\begin{equation*}
    a(i)^{-1} a(\ol{x}(i)) = {a'(i)}^{-1} a'(\ol{x}(i)) \ ,\ 
    a(i)^{-1} a(\ol{y}(i)) = {a'(i)}^{-1} a'(\ol{y}(i)) \ .
\end{equation*}
These are in turn equivalent to
\begin{equation*}
      a'(i) a(i)^{-1} 
    = a'(\ol{x}(i)) a(\ol{x}(i))^{-1}
    = a'(\ol{y}(i)) a(\ol{y}(i))^{-1} \ .
\end{equation*}
Since the subgroup $\ol{G}$ generated by $\ol{x}, \ol{y} \in \Sf_{\ol{E}} \cong \Sf_m$ acts transitively on the set $\ol{E} \cong [m]$, the above equality implies that for all $i \in [m]$, the difference between $a(i)$ and $a'(i)$ is a constant:
\begin{equation*}
      a'(i) a(i)^{-1} = a'(1)a(1)^{-1} = \id \ .
\end{equation*}
Thus $a(i) = a'(i)$ for all $2 \leq i \leq m$, i.e., the two sets of group elements $(a(i))_{i=2}^m$ and $(a'(i))_{i=2}^m$ are identical, a contradiction.

\bigskip

We now prove surjectivity.
Given $M_N \in \fix(\pp; \phi_{l, m})$,
by Lemma \ref{lem:symmetric_map_to_orbifold_map} and Proposition \ref{prop:epi_0_zetaA}, the parameters $\sigma, \ppsi, N_N, \zeta_A$ can be determined.
Moreover, the universal covering map $\td{N} = (\td{E}, \td{x}, \td{y})$ is also determined.
Then clearly $(a(i))_{i=2}^m$ is determined by $\zeta(\tde_{i, \id}) = e_{i, a(i)}$.
\end{proof}

\begin{remark} 
In the construction of Lemma \ref{lem:orbifold_map_to_symmetric_map}, choosing different $\td{x}, \td{y}$ may yield different $b_k(i), k=1,2$, and consequently possibly different $x, y$.
Thus the maps in Lemma \ref{lem:orbifold_map_to_symmetric_map} and Theorem \ref{thm:fix_enum} depend on the choice of $\td{x}, \td{y}$.
However, this does not affect the bijectivity of the mapping.
\end{remark}

\bigskip
%%%%% subsection %%%%%
\subsection{Unrooted Map Enumeration Formula} \label{sec:proof_of_unrooted_enum}

According to Lemma \ref{lem:unrooted_from_fix_enum}, the enumeration of unrooted maps can be reduced to the enumeration of symmetric maps.
Furthermore, by Theorem \ref{thm:fix_enum}, the enumeration of symmetric maps can be reduced to the enumeration of rooted orbifold embedding maps. This proves Theorem \ref{thm:rooted_to_unrooted} and Theorem \ref{thm:unrooted_QU}.

\begin{proof}[Proof of Theorem \ref{thm:rooted_to_unrooted}]
From Lemma \ref{lem:unrooted_from_fix_enum}, Theorem \ref{thm:fix_enum} and formula \eqref{eq:numbered_to_rooted}, we obtain
\begin{equation*}\begin{split}
    \abs{\Mc(\pp)} 
    =& \sum_{lm = n} \frac1{m! \ l^m} \abs{\fix(\pp; \phi_{l, m})} \\
    =& \sum_{lm = n} \frac1{m! \ l^m} \sum_{g = \sigma \times l} \sum_{\pp = \sigma \times_l \ppsi} l^{m-1} \cdot \abs{\Mc_N(\ppsi)} \cdot \abs{\Epi_0(\Gamma(\sigma), \ZZ_l)} \\
    =& \sum_{lm = n} \frac1{m! \ l^m} \sum_{g = \sigma \times l} \sum_{\pp = \sigma \times_l \ppsi} l^{m-1} \cdot \left((m-1)! \abs{\Mc_R(\ppsi)}\right) \cdot \abs{\Epi_0(\Gamma(\sigma), \ZZ_l)} \ .
\end{split}\end{equation*}

Here
\begin{equation*}
    \frac1{m! \ l^m} \cdot l^{m-1} \cdot (m-1)! = \frac1{m \cdot l} = \frac1n \ .
\end{equation*}

Thus we have proved
\begin{equation*}
    \abs{\Mc(\pp)} = 
    \frac1n \sum_{l \, |\, n} \sum_{g = \sigma \times l} \sum_{\pp = \sigma \times_l \ppsi} 
    \abs{\Mc_R(\ppsi)} \cdot \abs{\Epi_0 (\Gamma(\sigma), \ZZ_l)} \ . \qedhere
\end{equation*}
\end{proof}

\begin{proof}[Proof of Theorem \ref{thm:unrooted_QU}]
Restrict the passport in Theorem \ref{thm:rooted_to_unrooted} to the quasi-one-face passport $\ppqu$ directly.
\end{proof}

\bigskip

By the definition of multiplication of orbifold symbol and passport, only quasi-one-face passports can multiply to a quasi-one-face passport.
This completes the issue of how to compute $\abs{\Mc_R(\ppsi)}$ in Theorem \ref{thm:unrooted_QU}:
since $\ppsi$ will be quasi-one-face, it can ultimately be obtained from Theorem \ref{thm:CFF_general_passport_formula}.
Recall that it is not easy to compute the number of maps for arbitrary passports.

\begin{lemma} \label{lem:QU_divided_also_QU}
    If $\ppqu$ is an unlabeled quasi-one-face passport, then any passport $\ppsi$ satisfying $\ppqu = \sigma \times_l \ppsi$ is also a quasi-one-face passport, and $l \mid m$.
\end{lemma}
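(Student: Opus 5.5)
The plan is to read everything off the multiplication formula \eqref{eq:sigma_pp_multi}, applied to the face distribution $\Pi_3$ of $\ppsi$. Write $\ppsi = (g_2, m'; \Pi_1, \Pi_2, \Pi_3)$, where $m'$ is its number of edges, so $n = l m'$. Write $\Pi_3 = \prod_j {_{t_{3j}} (w_{3j})^{\lambda_{3j}}}$. By \eqref{eq:sigma_pp_multi}, every index $(w_{3j}, t_{3j})$ contributes $\lambda_{3j} l / t_{3j}$ faces of weight $w_{3j} t_{3j}$ to $\pi_3 = (m\ 1^{n-m})$. Since $t_{3j} \mid l$ (by Theorem \ref{thm:g_l_sigma_admissible}, as $T \mid l$), each such contribution consists of at least one face. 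The proof is then a case analysis on which index produces the special face of weight $m$.

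First, consider the indices with $w_{3j} t_{3j} = 1$. Then $w_{3j} = t_{3j} = 1$, so these contribute digons with trivial cone order. Second, consider the indices with $w_{3j} t_{3j} = m$ (assuming $m > 1$). Since $\pi_3$ contains exactly one part equal to $m$, there is exactly one such index, and it satisfies $\lambda_{3j} l / t_{3j} = 1$. Together with $t_{3j} \mid l$, this forces $t_{3j} = l$ and $\lambda_{3j} = 1$. So $\Pi_3 = {_l (m/l)^1}\ {_1 1^{\lambda}}$, with $\lambda = (n-m)/l$. This gives $l \mid m$, and it shows that $\Pi_3$ consists of one special face of weight $m/l$ plus faces of weight $1$. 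Counting edges, $m/l + \lambda = n/l = m'$, so $\ppsi$ is the quasi-one-face passport $(g_2, m/l, m'; \Pi_1, \Pi_2)$. Its first two distributions keep their orbifold labels, which by Definition \ref{def:orbifold_map_passport} is allowed as a labelled passport.

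The degenerate case $m = 1$ needs separate care, because then the special face is indistinguishable from the digons by weight. Here $l \mid 1$ is trivial. The faces of $\pi_3$ all have weight $1$, so every index of $\Pi_3$ has $w = t = 1$. Then $\Pi_3$ consists only of weight-one faces, which is a quasi-one-face distribution with $m$-parameter $1$. If one uses the convention $\Pi_3 = (1_1\ 1_2^{n-1})$, the distinguished face must be tracked through the quotient, which is the fiddliest point. I expect the main obstacle to be justifying $t_{3j} \mid l$ for the cone points in faces, so that $\lambda l / t \ge 1$ really is an integer count. My first attempt would be to use the regular-cover description: $|\phi^{-1}(u)| = l/\nu(u)$ is the size of an orbit of $\ZZ_l$, so it divides $l$. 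This avoids relying solely on Harvey's conditions.
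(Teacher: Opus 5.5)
Your treatment of the main case $m>1$ is correct and follows the paper's argument. The paper simply reads off from \eqref{eq:sigma_pp_multi} that $\Pi_3$ of $\ppsi$ must be $({_l (m/l)}\ {_1 1^{(n-m)/l}})$. You supply the steps it leaves implicit:
\begin{itemize}
\item each index contributes at least one face, because $t_{3j}\mid l$;
\item exactly one index has $w_{3j}t_{3j}=m$;
\item for that index, $\lambda_{3j}l/t_{3j}=1$ together with $t_{3j}\le l$ forces $t_{3j}=l$ and $\lambda_{3j}=1$.
\end{itemize}

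The $m=1$ paragraph, however, contains a real error. You write that ``$l\mid 1$ is trivial''. But $l\mid 1$ is the nontrivial assertion $l=1$, and your argument for that case (all indices of $\Pi_3$ have $w=t=1$) says nothing about $l$.

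In fact, if $\pi_3=(1^n)$ is read literally as an unlabeled distribution, the claim is false for $n>1$. Take the spherical map with one black and one white vertex joined by $n$ parallel edges; it has rotation group $\ZZ_n$. With $l=n$, $\sigma=(0;n,n)$ and $\ppsi=(0,1;{_n 1},{_n 1},{_1 1})$, one checks that $\ppqu=\sigma\times_n\ppsi$ and that Harvey's conditions hold. Yet $n\nmid 1$.

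So for $m=1$ the lemma holds only under the paper's convention $\Pi_3=(1_1\ 1_2^{n-1})$, and you must actually use it. The paper's proof does not separate this case; its assignment $t_{31}=l$, $w_{31}=m/l$ tacitly relies on the convention. The argument runs as follows:
\begin{itemize}
\item The distinguished face has multiplicity one, so the index of $\Pi_3$ producing it satisfies $\lambda l/t=1$.
\item Hence $t=l$, exactly as in your main case.
\item Its weight satisfies $wt=1$, so $t=1$, giving $l=1$.
\end{itemize}
Geometrically, the distinguished digon must be fixed by $\ZZ_l$, but a face of weight $1$ cannot carry a nontrivial cone point.

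This is the same mechanism as your $m>1$ argument, with the label rather than the weight singling out the special face. The step you flagged as ``fiddliest'' is exactly where the proof needed completing, not deferring.
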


\begin{proof}
Since $\ppqu = (g, m, n; \pi_1, \pi_2) = (g, n; \pi_1, \pi_2, \pi_3)$ is a quasi-one-face passport, we have $\pi_3 = (m\ 1^{n-m})$.
By the definition of multiplication \eqref{eq:sigma_pp_multi},
the parameters $w_{3i}, t_{3i}, \lambda_{3i}$ must satisfy
\begin{equation*}
    w_{31} = m\ /\ l \ ,\  t_{31} = l \ ,\ \lambda_{31} = 1 \quad,\quad 
    w_{32} = 1 \ ,\ t_{32} = 1 \ , \ \lambda_{32} = (n-m)\ /\ l \ .
\end{equation*}
Hence $\Pi_3$ of $\ppsi$ equals $\left( {_l (m/l)}\ {_1 1^{(n - m) / l}} \right)$, which proves that $\ppsi$ is a quasi-one-face passport.
\end{proof}

\bigskip\noindent
{\bf Acknowledgment. }
The author would like to express their sincere thanks to Bin Xu and Sicheng Lu for providing influential support on this work. The author is supported by the National Natural Science Foundation of China (Grant No. 125B1019).

\newpage

%%%%% bibliogy %%%%%%
\bibliographystyle{plain}
\bibliography{ref_Tree}  % 参考文献使用

@book {LsZak04,
    AUTHOR = {Lando, Sergei K. and Zvonkin, Alexander K.},
     TITLE = {Graphs on surfaces and their applications},
    SERIES = {Encyclopaedia of Mathematical Sciences},
    VOLUME = {141},
      NOTE = {With an appendix by Don B. Zagier, Low-Dimensional Topology, II},
 PUBLISHER = {Springer-Verlag, Berlin},
      YEAR = {2004},
     PAGES = {xvi+455},
      ISBN = {3-540-00203-0},
   MRCLASS = {14H55 (05-02 05C10 05C30 05C50 14H10 14H30 32G15)},
  MRNUMBER = {2036721},
MRREVIEWER = {Athanase\ Papadopoulos},
       DOI = {10.1007/978-3-540-38361-1},
       URL = {https://doi.org/10.1007/978-3-540-38361-1},
}

@article{YYK15,
    AUTHOR = {Kochetkov, Yu. Yu.},
     TITLE = {Enumeration of a class of plane weighted trees},
   JOURNAL = {Fundam. Prikl. Mat.},
  FJOURNAL = {Fundamental\cprime naya i Prikladnaya Matematika},
    VOLUME = {18},
      YEAR = {2013},
    NUMBER = {6},
     PAGES = {171--184},
      ISSN = {1560-5159,2076-6203},
   MRCLASS = {05C30 (05C05 05C22)},
  MRNUMBER = {3431863},
       DOI = {10.1007/s10958-015-2503-5},
       URL = {https://doi.org/10.1007/s10958-015-2503-5},
}

@misc{LuSong26,
      title={Enumeration of weighted plane trees by a permutation model}, 
      author={Sicheng Lu and Yi Song},
      year={2026},
      eprint={2601.07544},
      archivePrefix={arXiv},
      primaryClass={math.CO},
      url={https://arxiv.org/abs/2601.07544}, 
}

@article {GorTor1980,
    AUTHOR = {Gordon, M. and Torkington, John A.},
     TITLE = {Enumeration of coloured plane trees with a given type partition},
   JOURNAL = {Discrete Appl. Math.},
  FJOURNAL = {Discrete Applied Mathematics. The Journal of Combinatorial Algorithms, Informatics and Computational Sciences},
    VOLUME = {2},
      YEAR = {1980},
    NUMBER = {3},
     PAGES = {207--223},
      ISSN = {0166-218X,1872-6771},
   MRCLASS = {05C30 (05C05 82A42)},
  MRNUMBER = {588700},
MRREVIEWER = {J.\ W.\ Moon},
       DOI = {10.1016/0166-218X(80)90041-4},
       URL = {https://doi.org/10.1016/0166-218X(80)90041-4},
}

@ARTICLE{SXY22axv,
       author = {{Song}, Jijian and {Xu}, Bin and {Ye}, Yu},
        title = "{A note on the Hurwitz problem and cone spherical metrics}",
      journal = {arXiv e-prints},
         year = 2022,
        month = oct,
          eid = {arXiv:2210.09700},
        pages = {arXiv:2210.09700},
          doi = {10.48550/arXiv.2210.09700},
archivePrefix = {arXiv},
       eprint = {2210.09700},
 primaryClass = {math.GR},
       adsurl = {https://ui.adsabs.harvard.edu/abs/2022arXiv221009700S}
}

@article {Hur91,
    AUTHOR = {Hurwitz, A.},
     TITLE = {Ueber {R}iemann'sche {F}l\"achen mit gegebenen
              {V}erzweigungspunkten},
   JOURNAL = {Math. Ann.},
  FJOURNAL = {Mathematische Annalen},
    VOLUME = {39},
      YEAR = {1891},
    NUMBER = {1},
     PAGES = {1--60},
      ISSN = {0025-5831,1432-1807},
   MRCLASS = {99-04},
  MRNUMBER = {1510692},
       DOI = {10.1007/BF01199469},
       URL = {https://doi.org/10.1007/BF01199469},
}

@article {Zhe06,
    AUTHOR = {Zheng, Hao},
     TITLE = {Realizability of branched coverings of {$S^2$}},
   JOURNAL = {Topology Appl.},
  FJOURNAL = {Topology and its Applications},
    VOLUME = {153},
      YEAR = {2006},
    NUMBER = {12},
     PAGES = {2124--2134},
      ISSN = {0166-8641,1879-3207},
   MRCLASS = {57M12 (05C30)},
  MRNUMBER = {2239076},
MRREVIEWER = {M.\ L.\ Marx},
       DOI = {10.1016/j.topol.2005.08.007},
       URL = {https://doi.org/10.1016/j.topol.2005.08.007},
}

@article {Zog15,
    AUTHOR = {Zograf, Peter},
     TITLE = {Enumeration of {G}rothendieck's dessins and {KP} hierarchy},
   JOURNAL = {Int. Math. Res. Not. IMRN},
  FJOURNAL = {International Mathematics Research Notices. IMRN},
      YEAR = {2015},
    NUMBER = {24},
     PAGES = {13533--13544},
      ISSN = {1073-7928,1687-0247},
   MRCLASS = {14H57 (35Q53 37K10)},
  MRNUMBER = {3436154},
MRREVIEWER = {Valentina\ Nikolaevna\ Davletshina},
       DOI = {10.1093/imrn/rnv077},
       URL = {https://doi.org/10.1093/imrn/rnv077},
}

@article {Wal75,
    AUTHOR = {Walsh, T. R. S.},
     TITLE = {Hypermaps versus bipartite maps},
   JOURNAL = {J. Combinatorial Theory Ser. B},
  FJOURNAL = {Journal of Combinatorial Theory. Series B},
    VOLUME = {18},
      YEAR = {1975},
     PAGES = {155--163},
      ISSN = {0095-8956},
   MRCLASS = {05C10},
  MRNUMBER = {360328},
MRREVIEWER = {Seth\ R.\ Alpert},
       DOI = {10.1016/0095-8956(75)90042-8},
       URL = {https://doi.org/10.1016/0095-8956(75)90042-8},
}

@article {Arq87,
    AUTHOR = {Arqu\`es, Didier},
     TITLE = {Hypercartes point\'ees sur le tore: d\'ecompositions et
              d\'enombrements},
   JOURNAL = {J. Combin. Theory Ser. B},
  FJOURNAL = {Journal of Combinatorial Theory. Series B},
    VOLUME = {43},
      YEAR = {1987},
    NUMBER = {3},
     PAGES = {275--286},
      ISSN = {0095-8956,1096-0902},
   MRCLASS = {05C30 (05C10 05C65)},
  MRNUMBER = {916372},
MRREVIEWER = {E.\ M.\ Palmer},
       DOI = {10.1016/0095-8956(87)90003-7},
       URL = {https://doi.org/10.1016/0095-8956(87)90003-7},
}

@article {Zan95,
    AUTHOR = {Zannier, Umberto},
     TITLE = {On {D}avenport's bound for the degree of {$f^3-g^2$} and
              {R}iemann's existence theorem},
   JOURNAL = {Acta Arith.},
  FJOURNAL = {Acta Arithmetica},
    VOLUME = {71},
      YEAR = {1995},
    NUMBER = {2},
     PAGES = {107--137},
      ISSN = {0065-1036,1730-6264},
   MRCLASS = {11C08 (11D25 11G99)},
  MRNUMBER = {1339121},
MRREVIEWER = {T.\ N.\ Shorey},
       DOI = {10.4064/aa-71-2-107-137},
       URL = {https://doi.org/10.4064/aa-71-2-107-137},
}

@article {HarTut64,
    AUTHOR = {Harary, F. and Tutte, W. T.},
     TITLE = {The number of plane trees with a given partition},
   JOURNAL = {Mathematika},
  FJOURNAL = {Mathematika. A Journal of Pure and Applied Mathematics},
    VOLUME = {11},
      YEAR = {1964},
     PAGES = {99--101},
      ISSN = {0025-5793},
   MRCLASS = {05.45},
  MRNUMBER = {174490},
MRREVIEWER = {Gert\ Sabidussi},
       DOI = {10.1112/S0025579300004307},
       URL = {https://doi.org/10.1112/S0025579300004307},
}

@article {GlJac92,
    AUTHOR = {Goulden, I. P. and Jackson, D. M.},
     TITLE = {The combinatorial relationship between trees, cacti and
              certain connection coefficients for the symmetric group},
   JOURNAL = {European J. Combin.},
  FJOURNAL = {European Journal of Combinatorics},
    VOLUME = {13},
      YEAR = {1992},
    NUMBER = {5},
     PAGES = {357--365},
      ISSN = {0195-6698,1095-9971},
   MRCLASS = {05E15 (05C25 20B30)},
  MRNUMBER = {1181077},
MRREVIEWER = {Andrea\ Brini},
       DOI = {10.1016/S0195-6698(05)80015-0},
       URL = {https://doi.org/10.1016/S0195-6698(05)80015-0},
}

@article {GpSch98,
    AUTHOR = {Goupil, Alain and Schaeffer, Gilles},
     TITLE = {Factoring {$n$}-cycles and counting maps of given genus},
   JOURNAL = {European J. Combin.},
  FJOURNAL = {European Journal of Combinatorics},
    VOLUME = {19},
      YEAR = {1998},
    NUMBER = {7},
     PAGES = {819--834},
      ISSN = {0195-6698,1095-9971},
   MRCLASS = {05C30 (05C10 05C38 57M99)},
  MRNUMBER = {1649966},
MRREVIEWER = {K.\ S.\ Sarkaria},
       DOI = {10.1006/eujc.1998.0215},
       URL = {https://doi.org/10.1006/eujc.1998.0215},
}

@article {PouSch02,
    AUTHOR = {Poulalhon, Dominique and Schaeffer, Gilles},
     TITLE = {Factorizations of large cycles in the symmetric group},
   JOURNAL = {Discrete Math.},
  FJOURNAL = {Discrete Mathematics},
    VOLUME = {254},
      YEAR = {2002},
    NUMBER = {1-3},
     PAGES = {433--458},
      ISSN = {0012-365X,1872-681X},
   MRCLASS = {20B30 (05A15 20C30)},
  MRNUMBER = {1910123},
MRREVIEWER = {Gary\ L.\ Walls},
       DOI = {10.1016/S0012-365X(01)00361-2},
       URL = {https://doi.org/10.1016/S0012-365X(01)00361-2},
}

@article {Boc82,
    AUTHOR = {Boccara, G.},
     TITLE = {Cycles comme produit de deux permutations de classes
              donn\'ees},
   JOURNAL = {Discrete Math.},
  FJOURNAL = {Discrete Mathematics},
    VOLUME = {38},
      YEAR = {1982},
    NUMBER = {2-3},
     PAGES = {129--142},
      ISSN = {0012-365X,1872-681X},
   MRCLASS = {20B35 (05A05)},
  MRNUMBER = {676530},
MRREVIEWER = {J.\ L.\ Brenner},
       DOI = {10.1016/0012-365X(82)90282-5},
       URL = {https://doi.org/10.1016/0012-365X(82)90282-5},
}

@article {CFF13,
    AUTHOR = {Chapuy, Guillaume and F\'eray, Valentin and Fusy, \'Eric},
     TITLE = {A simple model of trees for unicellular maps},
   JOURNAL = {J. Combin. Theory Ser. A},
  FJOURNAL = {Journal of Combinatorial Theory. Series A},
    VOLUME = {120},
      YEAR = {2013},
    NUMBER = {8},
     PAGES = {2064--2092},
      ISSN = {0097-3165,1096-0899},
   MRCLASS = {05C10 (05A19)},
  MRNUMBER = {3102175},
MRREVIEWER = {Anna\ de Mier},
       DOI = {10.1016/j.jcta.2013.08.003},
       URL = {https://doi.org/10.1016/j.jcta.2013.08.003},
}

@article {MedNed10,
    AUTHOR = {Mednykh, Alexander and Nedela, Roman},
     TITLE = {Enumeration of unrooted hypermaps of a given genus},
   JOURNAL = {Discrete Math.},
  FJOURNAL = {Discrete Mathematics},
    VOLUME = {310},
      YEAR = {2010},
    NUMBER = {3},
     PAGES = {518--526},
      ISSN = {0012-365X,1872-681X},
   MRCLASS = {05C30 (05C10 05C25 05C65 20H10 30F20)},
  MRNUMBER = {2564806},
MRREVIEWER = {Valery\ A.\ Liskovets},
       DOI = {10.1016/j.disc.2009.03.033},
       URL = {https://doi.org/10.1016/j.disc.2009.03.033},
}

@article {MedNed06,
    AUTHOR = {Mednykh, Alexander and Nedela, Roman},
     TITLE = {Enumeration of unrooted maps of a given genus},
   JOURNAL = {J. Combin. Theory Ser. B},
  FJOURNAL = {Journal of Combinatorial Theory. Series B},
    VOLUME = {96},
      YEAR = {2006},
    NUMBER = {5},
     PAGES = {706--729},
      ISSN = {0095-8956,1096-0902},
   MRCLASS = {05C30 (05C10)},
  MRNUMBER = {2236507},
MRREVIEWER = {D.\ S.\ Archdeacon},
       DOI = {10.1016/j.jctb.2006.01.005},
       URL = {https://doi.org/10.1016/j.jctb.2006.01.005},
}

@article {Med84,
    AUTHOR = {Mednykh, A. D.},
     TITLE = {Nonequivalent coverings of {R}iemann surfaces with a
              prescribed ramification type},
   JOURNAL = {Sibirsk. Mat. Zh.},
  FJOURNAL = {Akademiya Nauk SSSR. Sibirskoe Otdelenie. Sibirski\u i\
              Matematicheski\u i\ Zhurnal},
    VOLUME = {25},
      YEAR = {1984},
    NUMBER = {4},
     PAGES = {120--142},
      ISSN = {0037-4474},
   MRCLASS = {30F10 (30C25 30F20)},
  MRNUMBER = {754748},
MRREVIEWER = {Boris\ N.\ Apanasov},
}

@article {Har66,
    AUTHOR = {Harvey, W. J.},
     TITLE = {Cyclic groups of automorphisms of a compact {R}iemann surface},
   JOURNAL = {Quart. J. Math. Oxford Ser. (2)},
  FJOURNAL = {The Quarterly Journal of Mathematics. Oxford. Second Series},
    VOLUME = {17},
      YEAR = {1966},
     PAGES = {86--97},
      ISSN = {0033-5606,1464-3847},
   MRCLASS = {30.49},
  MRNUMBER = {201629},
MRREVIEWER = {G.\ G.\ Weill},
       DOI = {10.1093/qmath/17.1.86},
       URL = {https://doi.org/10.1093/qmath/17.1.86},
}

@article {Won71,
    AUTHOR = {Wong, C. K.},
     TITLE = {A uniformization theorem for arbitrary {R}iemann surfaces with
              signature},
   JOURNAL = {Proc. Amer. Math. Soc.},
  FJOURNAL = {Proceedings of the American Mathematical Society},
    VOLUME = {28},
      YEAR = {1971},
     PAGES = {489--495},
      ISSN = {0002-9939,1088-6826},
   MRCLASS = {30.45},
  MRNUMBER = {279303},
MRREVIEWER = {C.\ Earle},
       DOI = {10.2307/2037998},
       URL = {https://doi.org/10.2307/2037998},
}

\bigskip

\begingroup
\footnotesize 
Yi Song

\textsc{School of Mathematical Sciences, University of Science and Technology of China, Hefei, Anhui, People's Republic of China.} 

\textit{Email address}: \texttt{\textcolor[rgb]{0.00,0.00,0.84}{sif4delta0@mail.ustc.edu.cn}}

\endgroup

\newpage

\begin{appendix}

%%%%% section %%%%%
\section{Enumeration of Totally Labeled Weighted Trees} \label{ch:labeled_weighted_tree_enum}

This section presents results on the existence (Theorem \ref{thm:exist_tree}) and enumeration (Theorem \ref{thm:YYK_formula}) of genus 0 quasi-one-face maps (weighted bicolored plane trees).
The genus 0 results also serve as the foundation for solving the existence and enumeration problems for positive genus quasi-one-face maps, as detailed in the proofs of Theorem \ref{thm:CFF_general_passport_formula} and Corollary \ref{cor:exist_quasi-unicellular_map} at the end of Section \ref{ch:rooted_quasi-unicellular_map_enum}.

\bigskip

\begin{theorem}[Boccara \cite{Boc82} and Zannier \cite{Zan95}] \label{thm:exist_tree}
    Given a genus 0 quasi-one-face passport $\ppz = (g, m, n; \Pi_1, \Pi_2)$, $\Pi_i = (S_i, \lambda_i, \wt_i)$.
    Denote
    $$\gcd(\ppz) = \gcd\left(\wt_1(S_1) \sqcup \wt_2(S_2)\right) \ ,$$
    the greatest common divisor of all weights in the weight distribution of the passport.
    
    Then there exists weighted bicolored plane tree with passport $\ppz$ or $\Uc_W(\ppz) \neq \varnothing$, if and only if
    $(v(\ppz) - 1) \cdot \gcd(\ppz) \leq n$.
    \qed
\end{theorem}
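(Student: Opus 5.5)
The plan is to prove necessity by a leaf-peeling divisibility argument, and sufficiency by induction on the number of vertices, after normalising so that $\gcd(\ppz)=1$. Throughout I view elements of $\Uc_W(\ppz)$ as weighted bicolored plane trees: $v=v(\ppz)$ vertices, $m=v-1$ edges, positive edge weights $\wt_E$ summing to $n$, and each vertex weight equal to the sum of its incident edge weights.

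\textbf{Necessity.} Let $d=\gcd(\ppz)$. I will show every edge weight is a multiple of $d$, by induction on the number of edges. A leaf is incident to exactly one edge, so that edge's weight equals the leaf's weight, which is divisible by $d$. Delete the leaf and subtract this edge weight from its neighbour. All remaining vertex weights are still multiples of $d$, and none becomes zero unless the tree was a single edge. So the induction closes. Hence $n=\sum_E \wt_E(e)\geq d\,m=d\,(v(\ppz)-1)$.

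\textbf{Reduction for sufficiency.} Assume $(v-1)d\leq n$. Divide every vertex weight by $d$ to get a passport with total weight $n/d$, the same $v$, gcd $1$, and $v-1\leq n/d$. A tree for the scaled passport gives one for $\ppz$ by multiplying all edge weights by $d$. So it suffices to show: two nonempty multisets $B,W$ of positive integers with equal sums $N$, $\gcd=1$ and $|B|+|W|-1\leq N$, are realised by a bicolored tree. (The gcd hypothesis cannot be dropped. For $B=W=\{2,2\}$ we have $v-1=3\leq 4$, yet no tree exists, by necessity.)

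\textbf{Induction on $v$.} The base case $v=2$ is a single edge of weight $N$. For $v\geq 3$, I will attach a leaf. Choose a vertex of minimal weight $b$, say white, and a black vertex of weight $a>b$; such an $a$ exists unless all weights equal $b$, and then $\gcd=1$ forces $b=1$, which is handled directly. Remove the white vertex and replace $a$ by $a-b$. This yields a smaller instance with $v'=v-1$ and $N'=N-b$. The inequality $v'-1\leq N'$ needs $b\leq N-v+2$. I expect this to follow from minimality of $b$, since $N\geq b\cdot\max(|B|,|W|)$, with a small case check when $b$ is large. Finally, realise the smaller instance by induction and reattach the leaf by an edge of weight $b$.

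\textbf{Main obstacle.} The gcd may jump after the reduction, to some $d'>1$, and then the induction needs the stronger inequality $(v-2)d'\leq N-b$. I would first try to avoid this by choice. Prefer a leaf of weight $1$ when one exists, and among admissible pairs $(a,b)$ choose one that keeps some weight coprime to the rest. When $d'>1$ is forced, all remaining weights are multiples of $d'$, so $N-b\geq d'(v-2)$ should follow because $a-b$ and the untouched weights are at least $d'$. Making this case analysis airtight is the delicate part. If it resists, the fallback is Boccara's permutation-factorisation argument, which builds an $m$-cycle as a product of two permutations of the prescribed types.
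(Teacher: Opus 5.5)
Your necessity argument, the rescaling to $\gcd=1$, and the bound $b\le N-v+2$ are all sound. For $b=1$ that bound is the hypothesis, and for $b\ge 2$ the estimate $N\ge bv/2$ gives $N-v+2-b\ge (b-2)(v-2)/2\ge 0$. One small repair: the heavier vertex $a$ must lie in the colour class opposite to the chosen minimal vertex. If that class consists only of weight-$b$ vertices, take the minimal vertex from that class instead. The paper gives no proof of this statement; it only cites Boccara and Zannier. So a completed version of your induction would be an independent elementary proof.

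\textbf{The gap.} There is a genuine gap at the gcd step you flag, and the justification you sketch for the ``forced'' case is false. If the new gcd is $d'>1$, knowing that every remaining weight is at least $d'$ only gives $N-b\ge d'\max(|B'|,|W'|)$, because $N-b$ is the sum over a single colour class. That is roughly $d'(v-1)/2$, not $d'(v-2)$.

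\emph{Counterexample.} Take $B=\{3,2\}$, $W=\{1,2,2\}$, so $N=5$, $v=5$, gcd $1$, and the minimum $b=1$ is attained uniquely. Your rule allows the choice $a=3$. It gives $B'=\{2,2\}$, $W'=\{2,2\}$ with $d'=2$ and $(v-2)d'=6>4=N-b$, an instance that your own necessity part shows is not realisable. Only the choice $a=2$ works. So the proof must show that a good pair $(a,b)$ always exists, and ``choose one that keeps some weight coprime to the rest'' does not yet do this.

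\emph{How to close it.} Split into two cases.
\begin{itemize}
\item If the minimum $b$ is attained by at least two vertices, an untouched vertex of weight $b$ survives the reduction. Then $d'\mid b$ and $d'\mid a-b$ force $d'\mid a$, hence $d'$ divides every original weight and $d'=1$ for every choice.
\item If $b$ is attained only by $u$ (say white), let $a_1,\dots,a_k$ be the black weights, all $>b$. Let $d_i$ be the gcd produced by choosing $a_i$. The $d_i$ are pairwise coprime: a common prime of $d_i$ and $d_j$ divides $a_i$ and $a_i-b$, hence $b$, hence every weight. Either some $d_i=1$, and that choice works. Or all $d_i\ge 2$; then $D=\prod_i d_i$ divides every weight of $W'=W\setminus\{b\}$, and
$N-b\ge D|W'|\ge 2^{k-1}d_i|W'|\ge d_i(k-1+|W'|)=d_i(v-2)$ for every $i$, so every choice works.
\end{itemize}
With this case analysis your induction goes through; without it, the sufficiency direction is unproved.
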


\bigskip

\begin{definition}
    Given a totally labeled genus 0 quasi-one-face passport $\ppzl = (g, m, n; \Pi_1, \Pi_2)$, $\Pi_i = (S_i, \lambda_i, \wt_i)$.
    Two non-empty subsets $S_1' \subseteq S_1, S_2' \subseteq S_2$ satisfying the following relation
    \begin{equation}
        n' := \sum_{s \in S_1'} \wt_1(s) = \sum_{s \in S_2'} \wt_2(s) \ ,
    \end{equation}
    define a \textbf{sub-passport} of $\pp_{0, L}$
    $$
    \pp(S_1', S_2') := \big( 0, m', n'; \Pi_1' = (S_1', \one, \left.\wt_1\right|_{S_1'}) \ ,\ \Pi_2' = (S_2', \one, \left.\wt_2\right|_{S_2'}) \big) \ .
    $$
    We allow $S_1' = S_1$ and $S_2' = S_2$, which defines the \textbf{trivial sub-passport} $\pp(S_1, S_2) = \pp_{0, L}$ of $\pp_{0, L}$.
\end{definition}

\begin{definition}
    A \textbf{$k$-partition} $\pttp = \{\pp(S_{1, i}, S_{2, i})\}_{i=1}^k$ of a totally labeled passport $\pp_{0, L}$ is a $k$-element set of sub-passports of $\pp_{0, L}$ such that their ``disjoint union'' is the trivial sub-passport of $\pp_{0, L}$
    \begin{equation}
        \bigsqcup_{i=1}^k S_{1, i} = S_1 \ , \ \bigsqcup_{i=1}^k S_{2, i} = S_2 \ .
    \end{equation}
    The number of elements of a partition is the length of the partition $\abs{\pttp} := k$.
    The set of all partitions is denoted by $\pttp \in \Ptt(\pp_{0, L})$.
    We allow the \textbf{trivial partition} $\ptt{e} := \{\pp_{0, L}\}$ which contains only the trivial sub-passport.

    The $X$ function of a partition is analogous to its factorial $X(\pttp) := \prod_{i=1}^k (v(\pp_i) - 1)!$.
\end{definition}

\begin{theorem}[Kochetkov \cite{YYK15}] \label{thm:YYK_formula}
    The number of totally labeled weighted bicolored plane trees corresponding to a genus $0$ totally labeled quasi-one-face passport $\pp_{0, L} = (0, m, n; \Pi_1, \Pi_2)$ is
    \begin{equation}
        \abs{\Mc(\pp_{0, L})} = \sum_{\pttp \in \Ptt(\pp_{0, L})} (-1)^{\abs{\pttp} - 1} m^{\abs{\pttp} - 2} X(\pttp) \ .
    \end{equation}
    \qed
\end{theorem}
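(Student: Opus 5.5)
Kochetkov's formula (Theorem \ref{thm:YYK_formula}) is quoted from \cite{YYK15}, so what I would aim for is a self-contained derivation in the spirit of Goulden--Jackson/Lagrange inversion with inclusion--exclusion over passport partitions. The plan is as follows.

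\textbf{Step 1: reduce to a labeled forest count.} Encode a totally labeled weighted plane tree with passport $\pp_{0,L}$ as a factorization: its $m$ weighted edges, the black and white vertex labels, and the condition that the vertex weights are the prescribed $\wt_k(s)$. By Proposition \ref{prop:weighted_tree_no_isom}, totally labeled weighted trees have no nontrivial automorphisms, so unrooted and rooted counts differ by the factor $m$. I would first count a relaxed object $F(\pp_{0,L})$. It is a tree on the $v=v(\pp_{0,L})$ labeled vertices (bicolored, with $m=v-1$ edges), together with a plane embedding and positive edge weights summing correctly at each vertex. However, the edge-weight system is only required to be consistent \emph{globally on each connected piece}.

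\textbf{Step 2: the unconstrained count.} For a fixed abstract bicolored labeled tree, I would show the following. Ignoring positivity subtleties that are handled by the partition structure, the number of weight assignments that realize the prescribed vertex weights, times the number of plane embeddings (a product of $(\deg-1)!$ factors), summed over abstract trees, collapses via a Cayley/Pr\"ufer-type count to $m^{-1}(v-1)!$ for connected data. This matches the $|\pttp|=1$ term, $m^{-1}X(\ptt e)$.

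\textbf{Step 3: M\"obius inversion over passport partitions.} A weight system on an abstract configuration decomposes the vertex set into blocks, each of which is balanced (its black weights sum equals its white weights sum), giving a partition $\pttp\in\Ptt(\pp_{0,L})$. Gluing the blocks of a $k$-partition into a tree costs a forest-to-tree factor $m^{k-1}$ (the generalized Cayley count of trees on $k$ components of total size $m$). Inclusion--exclusion on the lattice of partitions then yields the sign $(-1)^{k-1}$ and the total term $m^{k-2}X(\pttp)$.

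The main obstacle is Step 3. It requires verifying that the M\"obius function of the relevant sub-passport lattice gives exactly $(-1)^{k-1}$ with the factor $m^{k-1}$, rather than a more complicated coefficient, and that degenerate (zero-weight) edges are exactly what the inclusion--exclusion cancels. To check this, I would first test the case of small $v$ (for example $v=3,4$) against direct enumeration. I would then argue via an exponential generating function in the passport variables, where the logarithm relates connected (balanced-block) objects to all objects.
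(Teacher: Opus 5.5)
The paper does not prove this statement; it quotes Kochetkov \cite{YYK15}. Your sketch therefore has to carry the whole argument, and its central step fails.

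\textbf{Step~2.} In a tree the edge weights are forced by the vertex weights: the weight of an edge $e$ equals, up to sign, the black-minus-white weight imbalance of either component of $T\setminus e$. So each labeled plane bicolored tree shape admits exactly one integer solution. Hence $\abs{\Mc(\ppzl)}$ is the number of labeled plane bicolored trees on the $p$ black and $q$ white vertices whose induced edge weights are all positive.

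If you drop positivity, as Step~2 says, you are counting all such plane trees. That number is $\sum(p-1)!\,(q-1)!$ over degree sequences, i.e.\ $(m-1)!\binom{m-1}{p-1}$, not $(m-1)!$. For example, take black weights $(1,3)$ and white weights $(2,2)$. There are four labeled paths. The middle edge has weight ``inner white minus leaf black'', i.e.\ $2-1$ or $2-3$, so exactly two are positive. This passport has no proper balanced sub-passport, so $\Ptt(\ppzl)=\{\ptt{e}\}$, and the partition structure cannot be what discards the other two. If instead you keep positivity, Step~2 is exactly the generic case of the theorem, and no Cayley/Pr\"ufer count yields it.

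You need a genuine argument that for every generic passport exactly $(m-1)!$ shapes are positive, independently of the weights. One route is to show that the count does not jump when the weights cross a wall where a sub-passport becomes balanced. On either side, the trees with one zero edge that become positive are equinumerous: $m_1m_2$ times the product of the block counts. Then evaluate the count at one convenient passport. Nothing in your plan supplies this.

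\textbf{Step~3.} Step~3 has the right shape. An induced weight vanishes exactly on an edge whose two sides are complementary balanced sub-passports. So a non-generic passport can be handled by perturbing to a generic one and subtracting the trees whose zero edges become positive. But the coefficients are asserted, not derived, and the mechanisms you name do not produce them.
\begin{itemize}
\item The M\"obius function of a set-partition lattice is $(-1)^{k-1}(k-1)!$. Taking the logarithm of an exponential generating function produces exactly such coefficients, with no power of $m$.
\item The gluing number is not a Cayley factor $m^{k-1}$. Three given positive plane trees on balanced blocks with $m_1,m_2,m_3$ edges can be joined by two zero edges that become positive after a generic perturbation in $(m-1)m_1m_2m_3$ ways. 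The reason: the second edge has $m_j$ or $m_j+1$ admissible corners in the middle block, according as the perturbed imbalances of the two end blocks differ or agree in sign, and $m_1+m_2+m_3=m-2$. The coefficient $+m$ of $X(\pttp)$ appears only after combining this with the two-block gluings $m_Bm_{B'}$ and the recursive counts of the blocks.
\item Your bookkeeping is internally inconsistent. A factor $m^{k-1}$ times block counts $m_i^{-1}(v_i-1)!=(m_i-1)!$ gives $m^{k-1}\prod_i(m_i-1)!$, not $m^{k-2}\prod_i m_i!=m^{k-2}X(\pttp)$.
\end{itemize}
As written, the proposal reverse-engineers the exponents of the formula. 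The missing ideas are the weight-independent generic count $(m-1)!$ and the exact gluing numbers for $k\ge 3$ blocks.
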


\begin{proposition} \label{prop:weighted_tree_no_isom}
    Given a genus $0$ totally labeled quasi-one-face passport $\ppzl = (0, m, n; \Pi_1, \Pi_2)$.
    Since every totally labeled weighted tree $T \in \Mc(\pp_{0, L})$ has only trivial automorphism $\Aut(T) \cong \{\id\}$, we have
    \begin{equation}
        \abs{\Uc_{R, W}(\ppzl)} = m \abs{\Uc_W(\ppzl)} = m \abs{\Mc(\ppzl)} \ ,\ \abs{\Mc_R(\ppzl)} = n \abs{\Mc(\ppzl)} \ .
    \end{equation}
\end{proposition}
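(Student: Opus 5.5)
The claim to prove is Proposition \ref{prop:weighted_tree_no_isom}. The plan is to show first that every totally labeled weighted plane tree $T$ with passport $\ppzl$ has trivial automorphism group. Both counting identities then follow from the orbit–stabilizer relation \eqref{eq:rooted_and_nonrooted} and its weighted analogue \eqref{eq:QUpassport_MR_and_URW}.

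\textbf{Step 1: $\Aut(T)=\{\id\}$.} Let $I$ be an automorphism of $T$. Because $T$ is totally labeled ($\lambda_k\equiv 1$), each vertex is the only one carrying its label, so $I$ fixes every vertex. I would then use the orientation-preserving nature of $I$. Pick any vertex $v$. The map $I$ fixes $v$ and permutes the edges at $v$ cyclically, preserving their rotation order. Each edge at $v$ has a distinct other endpoint, since a tree has no multiple edges, and $I$ fixes those endpoints. Hence $I$ fixes every edge at $v$.

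In the unweighted quasi-one-face picture the parallel edges of a weighted edge form a digon bundle. Since $I$ fixes both endpoints and preserves the cyclic order, it fixes each edge of the bundle as well. So $I$ acts trivially on all darts, and an orientation-preserving map isomorphism that fixes a dart is the identity. Alternatively, this is the faithfulness statement in Lemma \ref{lem:group_action_on_numbered_map}: a rooted map has no nontrivial automorphisms.

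One degenerate case needs separate treatment: the single-edge tree, $m=1$. There both vertices are fixed and the $n$ parallel edges form a cycle around the sphere, so a rotation might seem to act. However, the special face $f_0$ of weight $m$ is distinguished from the digons, so the rotation must fix $f_0$ and hence be trivial. This can also be checked directly.

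\textbf{Step 2: counting.} With trivial automorphism groups, formula \eqref{eq:QUpassport_MR_and_URW} gives
\[
\frac1m\abs{\Uc_{R,W}(\ppzl)}=\sum_{T\in\Uc_W(\ppzl)}\frac{1}{\abs{\Aut(T)}}=\abs{\Uc_W(\ppzl)}.
\]
Proposition \ref{prop:QU_cong_UW} gives $\abs{\Uc_W(\ppzl)}=\abs{\Mc(\ppzl)}$. Likewise \eqref{eq:rooted_and_nonrooted} gives $\frac1n\abs{\Mc_R(\ppzl)}=\abs{\Mc(\ppzl)}$. In other words, each of the $m$ choices of root weighted edge (respectively, $n$ choices of root edge) gives a distinct rooted object.

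The main obstacle is making the digon and parallel-edge argument in Step 1 airtight. A labeled vertex fixes its incident edges only up to the rotation inside a bundle of parallel edges. I would resolve this by arguing on the weighted tree directly, where edges are simple, and transporting the result through the canonical bijection of Proposition \ref{prop:QU_cong_UW}. That bijection is compatible with automorphisms because it is constructed canonically.
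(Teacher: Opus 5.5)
Your proposal is correct and follows the paper's argument. The paper also uses that distinct vertex labels together with the absence of multiple edges in a tree pin down every edge: it encodes each edge by its label pair $(i,j)$ and sorts these lexicographically to get a canonical numbering. Hence $\Aut(T)$ is trivial, and both identities are read off from \eqref{eq:rooted_and_nonrooted} and Proposition \ref{prop:QU_cong_UW}. Your extra treatment of the parallel-edge bundles, and of the case $m=1$, covers points the paper passes over silently; for $m=1$, triviality depends on the paper's convention that $f_0$ carries its own label. Note that it is this direct bundle argument, not mere canonicity of the bijection in Proposition \ref{prop:QU_cong_UW}, that justifies carrying trivial automorphism groups over to the unweighted maps.
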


\begin{proof}
Fix a totally labeled weighted tree $T = (S^2; E, U_k; \Lab_k, \wt_E) \in \Uc_W(\ppzl)$.
Assume that the index sets of weight distributions in passport $\ppzl$ are $[u_k(\ppzl)], \\ k=1,2$.
Denote the vertices $v^{(k)}_i := \Lab_k^{-1}(i)$.
Since there is at most one edge between any two vertices in a tree, we can encode the edges as
\begin{equation*}
    \Nf : E \hookrightarrow [u_1(\ppzl)] \times [u_2(\ppzl)] \ ,\ 
    e = \{v^{(1)}_i, v^{(2)}_j\} \mapsto (i, j) \ .
\end{equation*}
mapping an edge $e$ connecting vertices $v^{(1)}_i, v^{(2)}_j$ to the pair of positive integers $(i, j)$.
By sorting the image set $\Nf(E)$ in lexicographic order, we obtain an permutation $(e_l)_{l=1}^n$ of the edges, which gives a totally numbered weighted tree $(T, (e_l)_{l=1}^n)$. 
It has only the trivial automorphism.
Consequently, the original totally labeled weighted tree $T$ also has only the trivial automorphism.

From formulas \eqref{eq:rooted_and_nonrooted} and \ref{prop:QU_cong_UW}, we obtain
\begin{equation*}
    \abs{\Uc_{R, W}(\ppzl)} = m \sum_{M \in \Uc_W(\ppzl)} \frac1{\abs{\Aut(M)}} = m \abs{\Uc_W(\ppzl)} = m \abs{\Mc(\ppzl)} \ . 
\end{equation*}
\begin{equation*}
    \abs{\Mc_R(\ppzl)} = n \sum_{M \in \Mc(\ppzl)} \frac1{\abs{\Aut(M)}} = n \abs{\Mc(\ppzl)} \ . \qedhere
\end{equation*}

\end{proof}

\bigskip
%%%%% section %%%%%
\section{Enumeration of Order-Preserving Epimorphisms} \label{ch:epi_enum}

This section lists the results of Mednykh-Nedela \cite{MedNed06} on the enumeration of order-preserving epimorphisms from Fuchsian groups $\Gamma(\sigma)$ to finite cyclic groups $\ZZ_l$.
Some of the number-theoretic functions used are described below.

\bigskip

Euler's totient function $\varphi(n)$: the number of positive integers that are less than $n$ and coprime to $n$.

M\"{o}bius function $\mu(n)$: for a positive integer $n$ with prime factorization $n = p_1^{e_1} \cdots p_r^{e_r}$, $\mu(n)$ is defined as
\begin{itemize}
    \item if $n = 1$, $\mu(1) = 1$;
    \item if all prime factors appear to the first power $e_1 = \cdots = e_r = 1$, then $\mu(n) = (-1)^r$ depending on the parity of the number of prime factors;
    \item if there is any prime factor with exponent $e_i > 1$, $\mu(n) = 0$.
\end{itemize}

The following functions can also be defined.

\bigskip

von Sterneck function
\begin{equation}
    \eta(x, n) := \frac{\varphi(n) \cdot \mu\big(n \big/ \gcd(x, n)\big)}{\varphi\big(n \big/ \gcd(x, n)\big)} = \sum_{\substack{ 1 \leq k \leq n \\ \gcd(k, n) = 1 }} \exp\left( \frac{2\pi i k x}{n} \right) \ .
\end{equation}

Orbicyclic arithmetic function
\begin{equation}
    E(t_1, \cdots, t_r) = \frac1{T} \sum_{k=1}^T \prod_{i=1}^r \eta(k, t_i) \quad ,\quad \text{where}\ T = \lcm(t_1, \ldots, t_r) \ .
\end{equation}

Jordan multiplicative function
\begin{equation}
    \varphi_k(n) = \sum_{d \,|\, n} \mu\left( \frac{n}{d} \right) d^k \ .
\end{equation}

\begin{proposition}[Mednykh-Nedela \cite{MedNed06}] \label{prop:Epi_count}
Given an orbifold signature $\sigma = (g; t_1, \cdots, t_r)$ and a positive integer $l$, let $T = \lcm(t_1, \cdots, t_r)$. Then
\begin{equation}
    \abs{\Epi_0 (\Gamma(\sigma), \ZZ_l)} = T^{2g} \cdot \varphi_{2g} (l\ /\ T) \cdot E(t_1, \cdots, t_r) \ .
\end{equation}
\qed
\end{proposition}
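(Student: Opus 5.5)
The statement is Proposition~\ref{prop:Epi_count}, the count of order-preserving epimorphisms $\Gamma(\sigma)\to\ZZ_l$. I would prove it by reducing to homomorphisms out of the abelianization. Since $\ZZ_l$ is abelian, a homomorphism $\zeta:\Gamma(\sigma)\to\ZZ_l$ is determined by the images $\zeta(a_i),\zeta(b_i)\in\ZZ_l$ ($1\le i\le g$) and $\zeta(x_j)=c_j\in\ZZ_l$. The only constraints come from the relations: the commutators vanish automatically, so we need $\sum_j c_j=0$ and $t_jc_j=0$. In $\Gamma(\sigma)$ every torsion element is conjugate to a power of some $x_j$, and $x_j$ has order exactly $t_j$. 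Hence $\ker\zeta$ is torsion-free iff each $c_j$ has order exactly $t_j$ in $\ZZ_l$. The images of $a_i,b_i$ are unconstrained, which gives $2g$ free coordinates. So I would count tuples $(u_1,\dots,u_{2g},c_1,\dots,c_r)$ with
\[
\operatorname{ord}(c_j)=t_j,\qquad \sum_j c_j=0,
\]
and with the whole tuple generating $\ZZ_l$.

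First I would drop the surjectivity condition and count homomorphisms into $\ZZ_d$ for each $d$ with $T\mid d$. The elements of order exactly $t_j$ form the primitive elements of the subgroup $\ZZ_{t_j}\le\ZZ_{T}\le\ZZ_d$, so the $c_j$ automatically lie in the cyclic subgroup of order $T$. Counting solutions of $\sum c_j=0$ with each $c_j$ primitive of order $t_j$ uses the character sum
\[
\frac1T\sum_{k=1}^{T}\prod_j\Big(\sum_{c\ \text{primitive in}\ \ZZ_{t_j}} e^{2\pi i k c/t_j}\Big),
\]
and each inner sum is a Ramanujan sum, which is the von Sterneck function $\eta(k,t_j)$. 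This gives exactly $E(t_1,\dots,t_r)$. The free coordinates contribute $d^{2g}$. So the number of order-preserving homomorphisms into $\ZZ_d$ is $d^{2g}E(\bmt)$ when $T\mid d$, and $0$ otherwise.

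Next I would impose surjectivity by Möbius inversion over the subgroups $\ZZ_d\le\ZZ_l$, i.e.\ over $d\mid l$. An order-preserving homomorphism has image equal to a unique subgroup $\ZZ_d$, and it is still order-preserving into that image. Therefore
\[
\sum_{T\mid d\mid l}\abs{\Epi_0(\Gamma(\sigma),\ZZ_d)}=l^{2g}E(\bmt).
\]
Inverting over the divisor lattice of $l/T$, writing $d=Td'$, gives
\[
\abs{\Epi_0}=E(\bmt)\sum_{d'\mid l/T}\mu\big(\tfrac{l/T}{d'}\big)(Td')^{2g}=T^{2g}\varphi_{2g}(l/T)\,E(\bmt).
\]

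The main obstacle is keeping the order-preserving condition correct through the inversion. The exact-order condition on the $c_j$ must be intrinsic to the cyclic group, and I would check that it does not depend on the ambient $\ZZ_d$. For $g=0$ the factor $\varphi_0(l/T)$ vanishes unless $l=T$, which matches Theorem~\ref{thm:g_l_sigma_admissible}. I would also verify the torsion classification in Fuchsian groups, citing \cite{MedNed06} if needed.
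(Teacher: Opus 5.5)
The paper gives no proof of this proposition: it is quoted from Mednykh--Nedela and closed with \qed, so there is no internal argument to compare yours against. Your proof is correct and is the standard derivation. You abelianize, then count order-preserving homomorphisms into $\ZZ_d$ as $d^{2g}E(t_1,\ldots,t_r)$ when $T\mid d$ (and $0$ otherwise), using the Ramanujan/von Sterneck character sum over the order-$T$ subgroup. Finally you M\"obius-invert over the subgroup lattice of $\ZZ_l$, which reduces to the divisor lattice of $l/T$. All three steps check out, and the formula reproduces the values $12$, $1$, $2$ used in the paper's examples.

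Your one external input needs an explicit hypothesis. You use that every torsion element of $\Gamma(\sigma)$ is conjugate to a power of some $x_j$, and that $x_j$ has order exactly $t_j$. This holds for Fuchsian groups, and more generally for all good orbifolds. It fails for the bad orbifolds $O(0;t)$ and $O(0;t_1,t_2)$ with $t_1\neq t_2$, which are excluded in Proposition~\ref{prop:orbifold_universal_cover}. For these the stated formula is itself false under the torsion-free-kernel definition. For example, $\Gamma(0;2,4)\cong\ZZ_2$ has one order-preserving epimorphism onto $\ZZ_2$, while the formula gives $0$.

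Those signatures never arise as quotients $S_g/\ZZ_l$, so this does not affect the paper's use. You should still state ``good orbifold'' (or ``Fuchsian'') as the hypothesis under which your torsion classification, and hence the proposition, holds.
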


\bigskip
%%%%% section %%%%%
\section{Examples for the Main Theorems} \label{ch:example}

This section lists three examples of applying Theorem \ref{thm:CFF_general_passport_formula} and Theorem \ref{thm:unrooted_QU} to computations.

\begin{example} \label{eg:ppqu_844}
Take $\ppqu = (1, 4, 8; 8 \ ,\ 4^2)$, i.e., $g = 1, m = 4, n = 8$ and $\pi_1 = (8), \pi_2 = (4^2)$ are weight distributions.
Then $\Fill(\ppqu)$ has weight distributions $\bmwt_1 = (8), \bmwt_2 = (4, 4)$.

First, use Theorem \ref{thm:CFF_general_passport_formula} to compute the number of rooted maps.

When the cyclic data is $\Lambda_1 = ((1) ,(0, 0))$, the passports satisfying the condition $\pp_1 \xrightarrow{\Lambda_1} \Fill(\ppqu)$ are
\begin{enumerate}
    \item $\pp_{1,1} = (0, 4, 8; (6,1,1) ,(4, 4))$ (or change $(6,1,1)$ to $(1,6,1), (1,1,6)$), $\abs{\Mc(\pp_{1,1})} = 6$.
    \item $\pp_{1,2} = (0, 4, 8; (5,2,1) , (4, 4))$ (similarly, after permuting $(5,2,1)$, there are $6$ passports), $\abs{\Mc(\pp_{1,2})} = 6$.
    \item $\pp_{1,3} = (0, 4, 8; (4,3,1)  , (4, 4))$ (there are $6$), $\abs{\Mc(\pp_{1,3})} = 2$.
    \item $\pp_{1,4} = (0, 4, 8; (4,2,2)  , (4, 4))$ (there are $3$), $\abs{\Mc(\pp_{1,4})} = 2$.
    \item $\pp_{1,5} = (0, 4, 8; (3,3,2) , (4, 4))$ (there are $3$), $\abs{\Mc(\pp_{1,5})} = 6$.
\end{enumerate}

\bigskip

When the cyclic data is $\Lambda_2 = ((0) , (1, 0))$, the passports satisfying the condition $\pp_2 \xrightarrow{\Lambda_2} \Fill(\ppqu)$ are
$\pp_{2} = (0, 4, 8; (8) , (2, 1, 1 ;4))$.
Here $(2, 1, 1 ;4)$ means that in the two-dimensional array $\bmwt_{T, 2}$, the first row is $(2, 1, 1)$ and the second row is $(4)$.
After permuting the first row $(2, 1, 1)$, we obtain $3$ similar passports, all with $\abs{\Mc(\pp_2)} = 6$.

When the cyclic data is $\Lambda_3 = ((0) , (0, 1))$, similar to the $\Lambda_2$ case, there are $3$ passports and $6$ trees.

Based on the above results, we have
\begin{equation*}
\begin{split}
     &\abs{\Mc_R(\ppqu)} \\
    =&\ \frac{5}{2^{2 \times 2} \times 2!} \times 3^{-1} \big((6 \times 3 + 6 \times 6 + 2 \times 6 + 2 \times 3 + 6 \times 3) + (6 \times 3) + (6 \times 3) \big) \\
    =&\ 42 \ .
\end{split}
\end{equation*}

\bigskip

Next, use Theorem \ref{thm:unrooted_QU} to compute the number of unrooted maps.
First consider the positive integer $l$, the orbifold symbol $\sigma$, and the passport $\ppsi$ that satisfy the condition $l \ |\ m=4, 2 = \sigma \times l, \ppqu = \sigma \times_l \ppsi$.
Here we write $\ppqu =\\ (1, 8; 8 \ ,\ 4^2 \ ,\ 4\ 1^4)$.

\begin{enumerate}
    \item When $l = 1$, naturally we have $\sigma_1 = (2; \varnothing)$ and $\pp_{\sigma_1} = \ppqu$.
    Here $\abs{\Mc_R(\ppqu)} = 42$ and $\abs{\Epi_0(\Gamma(\sigma_1), \ZZ_1)} = 1$.

    \item When $l = 2$, $\sigma_2 = (0; 2^4)$, we can find $\pp_{\sigma_2} = (0, 4; {_2 4} \ ,\ {_2 2^2} \ ,\ {_2 2} \ {_1 1^2})$.
    Here $\big| \Mc_R(\pp_{\sigma_2}) \big| = 2$, $\abs{\Epi_0(\Gamma(\sigma_2), \ZZ_2)} = 1$.

    \item When $l = 4$, $\sigma_3 = (0; 4^2\ 2)$, we can find $\pp_{\sigma_3} = (0, 2; {_4 2} \ ,\ {_2 2} \ ,\ {_4 1} \ {_1 1})$.
    Here $\big| \Mc_R(\pp_{\sigma_3}) \big| = 2$, $\abs{\Epi_0(\Gamma(\sigma_2), \ZZ_4)} = 2$.
\end{enumerate}

Thus
\begin{equation*}
    \abs{\Mc(\ppqu)} = \frac18 (42 \times 1 + 2 \times 1 + 2 \times 2) = 6 \ .
\end{equation*}

\bigskip

Figure \ref{fig:114514} lists all $6$ maps in $\Uc_R(\ppqu) \cong \Mc(\ppqu)$.
\end{example}

\begin{figure}
\centering
\begin{tikzpicture}[scale=0.9]
    % 定义正方形边长
    \def\side{4}
    
    %%%%% map1 %%%%%
    \begin{scope}
    % 绘制正方形边框
    \draw[ultra thick] (0,0) rectangle (\side,\side);
    
    % 在上下两边添加向右的箭头（位于边的中间）
    \draw[thick, -{Stealth[length=3mm, width=2mm]}] 
        (3 - 0.2,0) -- ++(0.3,0);
    \draw[thick, -{Stealth[length=3mm, width=2mm]}] 
        (3 - 0.2,\side) -- ++(0.3,0);
    
    % 在左右两边添加向上的双箭头（位于边的中间，箭头方向相同）
    \draw[thick, -{Stealth[length=3mm, width=2mm]}] 
        (0,3 - 0.3) -- ++(0,0.3);
    \draw[thick, -{Stealth[length=3mm, width=2mm]}] 
        (0,3) -- ++(0,0.3);
    \draw[thick, -{Stealth[length=3mm, width=2mm]}] 
        (\side,3- 0.3) -- ++(0,0.3);
    \draw[thick, -{Stealth[length=3mm, width=2mm]}] 
        (\side,3) -- ++(0,0.3);

    % 绘制一般的黑点白点和连线
    \node[blk] (B) at (1.5, 2.5) {};
    \node[wht] (W) at (2.5, 1.5) {};
    \draw (B) edge[ultra thick] node[color = red, pos=0.7, above=0.7] {2} (W) ;
    \draw (B) edge[ultra thick] (0, \side/2) ;
    \draw (B) edge[ultra thick] (\side/2, \side) ;
    \draw (W) edge[ultra thick] node[color = red, pos=0.4, above=0.2] {1} (\side, \side/2);
    \draw (W) edge[ultra thick] node[color = red, pos=0.9, above=12] {1} (\side/2, 0);
    
    %绘制另一白点的连线
    \node[wht] (W1) at (0.5, 3.5) {};
    \draw (B) edge[ultra thick] node[color = red, pos=0.3, above=0.7] {4} (W1);
    \end{scope}

    %%%%% map2 %%%%%
    \begin{scope}[shift={(4.5, 0)}]
    % 绘制正方形边框
    \draw[ultra thick] (0,0) rectangle (\side,\side);
    
    % 在上下两边添加向右的箭头（位于边的中间）
    \draw[thick, -{Stealth[length=3mm, width=2mm]}] 
        (3 - 0.2,0) -- ++(0.3,0);
    \draw[thick, -{Stealth[length=3mm, width=2mm]}] 
        (3 - 0.2,\side) -- ++(0.3,0);
    
    % 在左右两边添加向上的双箭头（位于边的中间，箭头方向相同）
    \draw[thick, -{Stealth[length=3mm, width=2mm]}] 
        (0,3 - 0.3) -- ++(0,0.3);
    \draw[thick, -{Stealth[length=3mm, width=2mm]}] 
        (0,3) -- ++(0,0.3);
    \draw[thick, -{Stealth[length=3mm, width=2mm]}] 
        (\side,3- 0.3) -- ++(0,0.3);
    \draw[thick, -{Stealth[length=3mm, width=2mm]}] 
        (\side,3) -- ++(0,0.3);

    % 绘制一般的黑点白点和连线
    \node[blk] (B) at (1.5, 2.5) {};
    \node[wht] (W) at (2.5, 1.5) {};
    \draw (B) edge[ultra thick] node[color = red, pos=0.7, above=0.7] {2} (W) ;
    \draw (B) edge[ultra thick] (0, \side/2) ;
    \draw (B) edge[ultra thick] (\side/2, \side) ;
    \draw (W) edge[ultra thick] node[color = red, pos=0.4, above=0.2] {1} (\side, \side/2);
    \draw (W) edge[ultra thick] node[color = red, pos=0.9, above=12] {1} (\side/2, 0);
    
    %绘制另一白点的连线
    \node[wht] (W1) at (2.5, 3.25) {};
    \draw (B) edge[ultra thick] node[color = red, pos=0.7, below=0.5] {4} (W1);
    \end{scope}

    %%%%% map3 %%%%%
    \begin{scope}[shift={(9, 0)}]
    % 绘制正方形边框
    \draw[ultra thick] (0,0) rectangle (\side,\side);
    
    % 在上下两边添加向右的箭头（位于边的中间）
    \draw[thick, -{Stealth[length=3mm, width=2mm]}] 
        (3 - 0.2,0) -- ++(0.3,0);
    \draw[thick, -{Stealth[length=3mm, width=2mm]}] 
        (3 - 0.2,\side) -- ++(0.3,0);
    
    % 在左右两边添加向上的双箭头（位于边的中间，箭头方向相同）
    \draw[thick, -{Stealth[length=3mm, width=2mm]}] 
        (0,3 - 0.3) -- ++(0,0.3);
    \draw[thick, -{Stealth[length=3mm, width=2mm]}] 
        (0,3) -- ++(0,0.3);
    \draw[thick, -{Stealth[length=3mm, width=2mm]}] 
        (\side,3- 0.3) -- ++(0,0.3);
    \draw[thick, -{Stealth[length=3mm, width=2mm]}] 
        (\side,3) -- ++(0,0.3);

    % 绘制一般的黑点白点和连线
    \node[blk] (B) at (1.5, 2.5) {};
    \node[wht] (W) at (2.5, 1.5) {};
    \draw (B) edge[ultra thick] node[color = red, pos=0.7, above=0.7] {2} (W) ;
    \draw (B) edge[ultra thick] (0, \side/2) ;
    \draw (B) edge[ultra thick] (\side/2, \side) ;
    \draw (W) edge[ultra thick] node[color = red, pos=0.4, above=0.2] {1} (\side, \side/2);
    \draw (W) edge[ultra thick] node[color = red, pos=0.9, above=12] {1} (\side/2, 0);
    
    %绘制另一白点的连线
    \node[wht] (W1) at (0.75, 1.5) {};
    \draw (B) edge[ultra thick] node[color = red, pos=0.3, below=1.0] {4} (W1);
    \end{scope}

    %%%%% map4 %%%%%
    \begin{scope}[shift={(0, -4.5)}]

    % 在上下两边添加向右的箭头（位于边的中间）
    \draw[thick, -{Stealth[length=3mm, width=2mm]}] 
        (3 - 0.2,0) -- ++(0.3,0);
    \draw[thick, -{Stealth[length=3mm, width=2mm]}] 
        (3 - 0.2,\side) -- ++(0.3,0);
    
    % 在左右两边添加向上的双箭头（位于边的中间，箭头方向相同）
    \draw[thick, -{Stealth[length=3mm, width=2mm]}] 
        (0,3 - 0.3) -- ++(0,0.3);
    \draw[thick, -{Stealth[length=3mm, width=2mm]}] 
        (0,3) -- ++(0,0.3);
    \draw[thick, -{Stealth[length=3mm, width=2mm]}] 
        (\side,3- 0.3) -- ++(0,0.3);
    \draw[thick, -{Stealth[length=3mm, width=2mm]}] 
        (\side,3) -- ++(0,0.3);
    % 先绘制边处顶点
    \begin{scope}
    \clip (0,0) rectangle (\side,\side);
    \node[wht] (W11) at (\side/2, 0) {};
    \node[wht] (W12) at (\side/2, \side) {};
    \node[wht] (W21) at (\side, \side/2) {};
    \node[wht] (W22) at (0, \side/2) {};
    \end{scope}
    % 再绘制正方形边框
    \draw[ultra thick] (0,0) rectangle (\side,\side);

    % 绘制面心的黑点和连线
    \node[blk] (B) at (2, 2) {};
    \draw (B) edge[ultra thick] node[color = red, auto] {1} (W11);
    \draw (B) edge[ultra thick] node[color = red, auto] {3} (W12);
    \draw (B) edge[ultra thick] node[color = red, auto] {1} (W21);
    \draw (B) edge[ultra thick] node[color = red, auto] {3} (W22);
    \end{scope}

    %%%%% map5 %%%%%
    \begin{scope}[shift={(4.5, -4.5)}]

    % 在上下两边添加向右的箭头（位于边的中间）
    \draw[thick, -{Stealth[length=3mm, width=2mm]}] 
        (3 - 0.2,0) -- ++(0.3,0);
    \draw[thick, -{Stealth[length=3mm, width=2mm]}] 
        (3 - 0.2,\side) -- ++(0.3,0);
    
    % 在左右两边添加向上的双箭头（位于边的中间，箭头方向相同）
    \draw[thick, -{Stealth[length=3mm, width=2mm]}] 
        (0,3 - 0.3) -- ++(0,0.3);
    \draw[thick, -{Stealth[length=3mm, width=2mm]}] 
        (0,3) -- ++(0,0.3);
    \draw[thick, -{Stealth[length=3mm, width=2mm]}] 
        (\side,3- 0.3) -- ++(0,0.3);
    \draw[thick, -{Stealth[length=3mm, width=2mm]}] 
        (\side,3) -- ++(0,0.3);
    % 先绘制边处顶点
    \begin{scope}
    \clip (0,0) rectangle (\side,\side);
    \node[wht] (W11) at (\side/2, 0) {};
    \node[wht] (W12) at (\side/2, \side) {};
    \node[wht] (W21) at (\side, \side/2) {};
    \node[wht] (W22) at (0, \side/2) {};
    \end{scope}
    % 再绘制正方形边框
    \draw[ultra thick] (0,0) rectangle (\side,\side);

    % 绘制面心的黑点和连线
    \node[blk] (B) at (2, 2) {};
    \draw (B) edge[ultra thick] node[color = red, auto] {1} (W11);
    \draw (B) edge[ultra thick] node[color = red, auto] {3} (W12);
    \draw (B) edge[ultra thick] node[color = red, auto] {2} (W21);
    \draw (B) edge[ultra thick] node[color = red, auto] {2} (W22);
    \end{scope}

    %%%%% map6 %%%%%
    \begin{scope}[shift={(9, -4.5)}]

    % 在上下两边添加向右的箭头（位于边的中间）
    \draw[thick, -{Stealth[length=3mm, width=2mm]}] 
        (3 - 0.2,0) -- ++(0.3,0);
    \draw[thick, -{Stealth[length=3mm, width=2mm]}] 
        (3 - 0.2,\side) -- ++(0.3,0);
    
    % 在左右两边添加向上的双箭头（位于边的中间，箭头方向相同）
    \draw[thick, -{Stealth[length=3mm, width=2mm]}] 
        (0,3 - 0.3) -- ++(0,0.3);
    \draw[thick, -{Stealth[length=3mm, width=2mm]}] 
        (0,3) -- ++(0,0.3);
    \draw[thick, -{Stealth[length=3mm, width=2mm]}] 
        (\side,3- 0.3) -- ++(0,0.3);
    \draw[thick, -{Stealth[length=3mm, width=2mm]}] 
        (\side,3) -- ++(0,0.3);
    % 先绘制边处顶点
    \begin{scope}
    \clip (0,0) rectangle (\side,\side);
    \node[wht] (W11) at (\side/2, 0) {};
    \node[wht] (W12) at (\side/2, \side) {};
    \node[wht] (W21) at (\side, \side/2) {};
    \node[wht] (W22) at (0, \side/2) {};
    \end{scope}
    % 再绘制正方形边框
    \draw[ultra thick] (0,0) rectangle (\side,\side);

    % 绘制面心的黑点和连线
    \node[blk] (B) at (2, 2) {};
    \draw (B) edge[ultra thick] node[color = red, auto] {2} (W11);
    \draw (B) edge[ultra thick] node[color = red, auto] {2} (W12);
    \draw (B) edge[ultra thick] node[color = red, auto] {2} (W21);
    \draw (B) edge[ultra thick] node[color = red, auto] {2} (W22);
    \end{scope}

\end{tikzpicture}

\caption{$6$ maps with passport $\ppqu = (1, 4, 8; 8\ ,\ 4^2)$} \label{fig:114514}

\end{figure}

In Figure \ref{fig:114514}, only the map at the bottom right has $4$-fold symmetry, thus $\abs{\Mc_R(\ppqu)} = 8 \times (5 + \frac14) = 42$.

\bigskip

\begin{example} \label{eg:ppqu_55}
Take the quasi-one-face passport $\ppqu = (2, 5, 5; 5\ ,\ 5)$.

First, use Theorem \ref{thm:CFF_general_passport_formula} to compute the number of rooted maps.
\begin{enumerate}
    \item When the cyclic data is $\Lambda_1 = ((2) ,(0))$, the only passport satisfying the condition $\pp_1 \xrightarrow{\Lambda_1} \ppqu$ is $\pp_1 = (0, 5, 5; (1,1,1,1,1) , (5))$.
    And $\abs{\Mc(\pp_1)} = 24$.
    
    \item When the cyclic data is $\Lambda_2 = ((1),(1))$,
    since the $3$-partitions of $5$ are $\{3, 1, 1\}$ and $\{2, 2, 1\}$, and both have $3$ permutations, 
    %e.g., $(3,1,1), (1,3,1), (1,1,3)$, 
    the total number of ordered $3$-partitions of $5$ is $6$.
    Thus the passports satisfying the condition $\pp_2 \xrightarrow{\Lambda_2} \ppqu$ can be taken as
    $\pp_2 = (0, 5, 5; (3, 1, 1) , (3, 1, 1))$ wiht $6 \times 6 = 36$ in total.
    And the number of weighted trees is $\abs{\Mc(\pp_2)} = 4$.

    \item When the cyclic data is $\Lambda_3 = ((0) , (2))$, $\pp_3 = (0, 5, 5; (5) , (1,1,1,1,1))$, $\abs{\Mc(\pp_3)} = 24$.
\end{enumerate}

Based on the above results, we have
\begin{equation*}
    \abs{\Mc_R(\ppqu)} = \frac{5}{2^{2 \times 2}} (5^{-1} \times 24 + (3 \times 3)^{-1} \times 36 \times 4 + 5^{-1} \times 24) = 8 \ .
\end{equation*}

\bigskip

Next, use Theorem \ref{thm:unrooted_QU} to compute the number of unrooted maps.
Consider $l$, $\sigma$, $\ppsi$ that satisfy condition $l \ |\ 5, 2 = \sigma \times l, \ppqu = \sigma \times_l \ppsi$.
Here we write $\ppqu = (2, 5; 5 \ ,\ 5 \ ,\ 5)$
\begin{enumerate}
    \item When $l = 1$, $\sigma_1 = (2; \varnothing)$ and $\pp_{\sigma_1} = \ppqu$.
    Here $\abs{\Mc_R(\ppqu)} = 8$ and $\abs{\Epi_0(\Gamma(\sigma_1), \ZZ_1)} = 1$.

    \item When $l = 5$, only $\sigma_2 = (0; 5^3)$ satisfies the condition. Then
    $\pp_{\sigma_2} = (0, 1; {_5 1} \ ,\ {_5 1} \ ,\ {_5 1})$ by Notation \ref{notat:sigma_labeled_passport}.
    We have $\big| \Mc_R(\pp_{\sigma_2}) \big| = 1$ and 
    $\abs{\Epi_0 (\Gamma(\sigma_2), \ZZ_5)} = 12$ by Proposition \ref{prop:Epi_count}.
\end{enumerate}

Thus
\begin{equation*}
    \abs{\Mc(\ppqu)} = \frac15 (8 \times 1 + 1 \times 12) = 4 \ .
\end{equation*}

\bigskip

Table \ref{tab:xy_example} list the rooted algebraic maps to verify these two results $\abs{\Mc_R(\ppqu)} = 8, \abs{\Mc(\ppqu)} = 4$.

\textup{
\setlength{\tabcolsep}{20pt}
\begin{table}[ht]
  \centering
  \begin{tabular}{cc c}
    \toprule
    No.    &       $x$       &       $y$       \\
    \midrule
    1       & $(1,3,2,5,4)$   & $(1,5,3,2,4)$   \\
    2       & $(1,3,5,2,4)$   & $(1,5,4,3,2)$   \\
    3       & $(1,3,5,4,2)$   & $(1,3,2,5,4)$   \\
    4       & $(1,4,2,5,3)$   & $(1,4,2,5,3)$   \\
    5       & $(1,4,3,5,2)$   & $(1,3,5,4,2)$   \\
    6       & $(1,5,2,4,3)$   & $(1,4,3,5,2)$   \\
    7       & $(1,5,3,2,4)$   & $(1,5,2,4,3)$   \\
    8       & $(1,5,4,3,2)$   & $(1,3,5,2,4)$   \\
    \bottomrule
  \end{tabular}
  \caption{8 rooted algebraic maps of passport $\ppqu = (2, 5, 5; 5\ ,\ 5)$}
  \label{tab:xy_example}
\end{table}
}
%For rooted maps, we fix the edge set $E = [5]$ and the face permutation $yx = (1,2,3,4,5)$, so that $y(e) = x^{-1}(e) + 1$.
%We only need $y(e) \neq e \iff x(e) \neq e+1$ and $x$ to be a cycle to ensure that both $x$ and $y$ are cycles, thus conforming to the passport $\ppqu$. If $y$ were not a cycle, it could only have the cycle type $(3 \ 2)$, which would contradict the fact that $c(x) +c(y) + c(yx) - n = 1 + 2 + 1 - 5 = -1$ is even.

%In summary, enumeration yields $8$ pairs of $x, y$ (see Table \ref{tab:xy_example}), verifying that the total number of rooted maps $\abs{\Mc_R(\ppqu)} = 8$.

%\bigskip

Consider the rooted maps in Table \ref{tab:xy_example}. 
For the No. 1, 6, 3, 7, 5 algebraic maps, their permutations $x, y$ differ by conjugation of $\phi = (1,2,3,4,5)$ with adjacent maps, so they are isomorphic as unrooted maps. 
However, the No. 2, 4, 8 maps are self-conjugate, then are not isomorphic to the other maps. 
We can see that there are only $4$ non-isomorphic unrooted maps in Table \ref{tab:xy_example}, verifying that $\abs{\Mc(\ppqu)} = 4$.
\end{example}

\bigskip

\begin{example} \label{eg:ppqu_66}
Take the quasi-one-face passport $\ppqu = (2, 5, 6; 6\ ,\ 6)$.

First, use Theorem \ref{thm:CFF_general_passport_formula} to compute the number of rooted maps.

When the cyclic data is $\Lambda = ((2) , (0))$, the passports are $\pp = (0, 5, 6; (2,1,1,1,1) ,\\ (6))$, etc., giving $5$ in total (by permuting $(2,1,1,1,1)$).
And $\abs{\Mc(\pp)} = 24$.
The case $\Lambda = ((0), (2))$ is similar.

When the cyclic data is $\Lambda = ((1) , (1))$,
the $3$-partitions of $6$ are $\{4, 1, 1\}$, $\{3, 2, 1\}$ and $\{2, 2, 2\}$.
And the number of passports and trees are as follows

\begin{enumerate}
    \item Passports $\pp = (0,  5, 6; (4,1,1) , (4,1,1))$, $9$ in total, $4$ trees;
    \item Passports $\pp = (0,  5, 6; (4,1,1) , (3,2,1))$, $36$ in total, $8$ trees;
    \item Passports $\pp = (0,  5, 6; (4,1,1) , (2,2,2))$, $6$ in total, $12$ trees;
    \item Passports $\pp = (0,  5, 6; (3,2,1) , (3,2,1))$, $36$ in total, $7$ trees;
    \item Passports $\pp = (0,  5, 6; (3,2,1) , (2,2,2))$, $12$ in total, $6$ trees;
    \item Passports $\pp = (0,  5, 6; (2,2,2) , (2,2,2))$, $1$ in total, $0$ trees;
\end{enumerate}

Based on the above results, we have
\begin{equation*}\begin{split}
    \abs{\Mc_R(\ppqu)} =& \frac{6}{2^{2 \times 2}} \bigg( 5^{-1} \times 24 \times 2 + (3 \times 3)^{-1} \times \\ 
    &(4 \times 9 + 8 \times 36 + 12 \times 6 + 7 \times 36 + 6 \times 12 + 0 \times 1) \bigg) = 48 \ .
\end{split}\end{equation*}

Next, use Theorem \ref{thm:unrooted_QU} to compute the number of unrooted maps.
Since there is no non-trivial $l > 1$ satisfying $l \ |\ 5$ (by Lemma \ref{lem:QU_divided_also_QU}) and $l \ |\ 6$, the passport $\ppqu = (2, 6; 6 \ ,\ 6 \ ,\ 5\ 1)$ cannot be expressed as a non-trivial $\sigma \times_l \ppsi$. This also shows that the maps in $\Mc(\ppqu)$ have only trivial automorphisms, so
\begin{equation*}
    \abs{\Mc(\ppqu)} = \frac16 \abs{\Mc_R(\ppqu)} = 8 \ .
\end{equation*}

\bigskip

We use the conclusion of Example \ref{eg:ppqu_55} to verify $\abs{\Mc(\ppqu)} = 8$.
We already know that $\ppqu' = (2, 5, 5; 5 \ ,\ 5)$ has $\abs{\Mc_R(\ppqu')} = 8$. 
Now show that $\Mc(\ppqu) \cong \Uc_W(\ppqu) \cong \Mc_R(\ppqu')$.

It is known that a map $M$ in $\Uc_W(\ppqu)$ has $m = 5$ edges, but the total weight is $n = 6$,
so $M$ has $1$ edge of weight $2$ and $4$ edges of weight $1$.
If we take the edge of weight $2$ as the root edge and erase all weights, it corresponds to a rooted map $(M', e)$ in $\Mc_R(\ppqu')$ having $5$ edges and all of weight $1$.
This proves $\Uc_W(\ppqu) \cong \Mc_R(\ppqu')$.
\end{example}

\end{appendix}

\end{document}